\newtheorem{theorem}{Theorem}
\theoremstyle{definition}
	\newtheorem{lemma}{Lemma}
	\newtheorem{cor}{Corollary}
	\newtheorem{ex}{Example}
	\newtheorem{mdef}{Definition}
	\newcommand{\LL}{\mathcal{L}}
	\newcommand{\At}{\mathsf{At}}
	\newcommand{\LRN}{\mathsf{LRN}}
        \newcommand{\lrcn}{\mathsf{lrcn}}
        \newcommand{\lpt}{\mathsf{lpt}}
        \newcommand{\LRCN}{\mathsf{LRCN}}
        \newcommand{\BM}{\mathbf{BM}}
        \newcommand{\CLV}{\mathbf{CLV}}
\newcommand{\logic}[1]{{\ensuremath {\mathbf{#1}}}}
\newcommand{\lrcnclose}[1]{#1^{\LRCN}}
\newcommand{\set}[1]{\{#1\}}
\title{Topics, Non-Uniform Substitutions, and Variable Sharing}
\author{Shawn Standefer}
\email{standefer@ntu.edu.tw}
\author{Shay Allen Logan}
\email{salogan@ksu.edu}
\author{Thomas Macaulay Ferguson}
\email{fergut5@rpi.edu}
\begin{document}
    \maketitle

\begin{abstract}
    The family of \emph{relevant logics} can be faceted by a hierarchy of increasingly fine-grained \emph{variable sharing properties}---requiring that in valid entailments $A\to B$, some atom must appear in both $A$ and $B$ with some additional condition (\emph{e.g.}, with the same sign or nested within the same number of conditionals). In this paper, we consider an incredibly strong variable sharing property of \emph{lericone relevance} that takes into account the path of negations and conditionals in which an atom appears in the parse trees of the antecedent and consequent. We show that this property of lericone relevance holds of the relevant logic $\mathbf{BM}$ (and that a related property of \emph{faithful} lericone relevance holds of $\mathbf{B}$) and characterize the largest fragments of classical logic with these properties. Along the way, we consider the consequences for lericone relevance for the theory of subject-matter, for Logan's notion of hyperformalism, and for the very definition of a relevant logic itself.
\end{abstract}

\section{Introduction}

A logic \logic{L} is said to enjoy variable sharing iff whenever $A\to B$ is a theorem of \logic{L}, then $A$ and $B$ share a propositional atom. An important early result in the history of relevant logic was Belnap's proof that sublogics of the relevant logic \logic{R} enjoyed variable sharing.\footnote{\citet{belnap1960}; for overviews of relevant logics, see \citet{Dunn2002-DUNRL} or \citet{Bimbo:2006aa}. 
For more in depth discussion, see \citet{anderson1975}, \citet{routley1982}, \citet{Read1988}, \citet{mares2004}, \citet{Logan:2023aa}, \citet{standefer202x} among others.
}

But variable sharing, it turns out, comes in many flavors. One example of this is \emph{strong variable sharing}---also due to Belnap---which requires that the shared atom $p$ have the same polarity in both $A$ and $B$. Another, due to \citet{brady1984}, is \emph{depth relevance}, which requires the shared atom to occur in the scope of the same number of conditionals in $A$ as in $B$.\footnote{For more on depth relevance, see \citet{robles2012, robles2014} and 
\citet{logan2022}.} One can combine these to get the strong depth relevance of \citet{logan2021}. Ferguson and Logan~\citeyearpar{FergusonForthcoming-FERTTA-7}, introduced and defended two yet more restrictive versions of variable sharing, which were shown to be enjoyed by the weak relevant logics $\mathbf{B}$ and $\mathbf{BM}$. We will present these logics in Section 2.

While variable sharing properties are of special interest to relevant logicians, they also arise in other areas of logic and philosophy. One such area is the theory of topic, where variable sharing properties have been discussed by, among others, Yablo \citeyearpar{yablo2014} and Berto \citeyearpar{berto2022}, respectively. But much work in this area has been governed either explicitly or implicitly by two principles of \emph{topic transparency} that can be described as follows:

\vspace{.2cm}

\noindent\emph{Negation Transparency:} For any sentence $\Phi$, the topics of $\Phi$ and $\neg\Phi$ are identical.

\vspace{.2cm}

\noindent\emph{Conditional Transparency:} For an intensional conditional $\rightarrow$ and sentences $\Phi,\Psi$ the topic of $\Phi\rightarrow\Psi$ is the fusion of the topic of $\Phi$ and the topic of $\Psi$

\vspace{.2cm}

\noindent In many quarters, these assumptions have been challenged. The deductive system $\mathsf{AC}$, introduced by Angell~\citeyearpar{angell1977}, implicitly violates Negation Transparency, something made clear by Fine's \citeyearpar{fine2015} analysis of subject-matter in $\mathsf{AC}$, and Angell~\citeyearpar[121]{angell1989} gives an argument to this effect. Meanwhile, Berto~\citeyearpar{berto2022} concedes  that topic transparency is questionable in intensional contexts---pushing against Conditional Transparency---an idea which is explored at length by Ferguson \citeyearpar{ferguson2023a}.

Ferguson and Logan \citeyearpar{FergusonForthcoming-FERTTA-7} argued that their novel forms of variable sharing were natural consequences of challenging these features. If one takes negation and the relevant conditional to be \emph{topic transformative}, then ensuring relevance---in the sense of topic overlap---must require a stronger standard than to merely ensure the mutual occurence of some atomic formula in antecedent and consequent positions. What is required instead is the atomic formula shared between antecedent and consequent in any provable conditional occur in each position nested within the same configuration of instances of $\neg$ and $\rightarrow$.

The variation of these properties to be examined in this work involves a simple insight concerning Conditional Transparency: \emph{the topic of a conditional is sensitive to chirality}, that is, the contribution to the topic of a conditional made by an immediate subformula is influenced by whether it appears in the antecedent or consequent position. This can be summarized as a thesis of:

\vspace{.2cm}

\noindent\emph{Chiral Transparency:} For an intensional conditional $\rightarrow$ and sentences $\Phi,\Psi$ the topic of $\Phi\rightarrow\Psi$ is the same as that of $\Psi\rightarrow\Phi$

\vspace{.2cm}

\noindent There is precedent for the rejection of such a thesis. With respect to its reflection in formal systems, Ferguson~\citeyearpar{ferguson2024} showed that a subsystem of Parry's analytic implication distinguishes the topics of formulas $\varphi\rightarrow\psi$ and $\psi\rightarrow\varphi$. On a philosophical level, we also may recall a Yablovian defense of an emphasis on chirality in assessing the overall topic of an intensional conditional. Yablo \citeyearpar{yablo2014} makes a succinct but compelling argument that chirality influences topic in the case of \emph{predicates}, noting that insofar as ``\textsc{Man Bites Dog}'' touches on a more \emph{interesting} topic than ``\textsc{Dog Bites Man},'' the topics of the two must be distinct. As argued in \emph{e.g.} \cite{ferguson2023a,ferguson2023c}, this Yablovian argument can be adapted to show that pairs of conditional sentences that are converses of one another might part ways with respect to topic as well. As a consequence, a view of topic especially sensitive to the influence of intensional connectives will require a very strong flavor of variable sharing.

In addition to variable sharing properties, relevant logics, and indeed most common logics, are closed under \emph{uniform} substitution of formulas for atoms.\footnote{Pun\v{c}och\'a\v{r}~\citeyearpar{Puncochar2023-PUNLFS} provides a good overview of different generalizations and restrictions of uniform substitution and examples of uniform substitution failing.} Just as there are strengthenings of variable sharing, one can strengthen closure under substitutions by allowing \emph{non-uniform} substitutions.\footnote{Apart from the depth substitutions of the following sentence, there are some examples of non-uniform substitutions and generalizations of uniform substitution in the literature. Humberstone~\citeyearpar{Humberstone2005-LLOFWO} discusses scattered substitution, and Ferguson~\citeyearpar{Ferguson2017-FERDQO} discusses another notion under that same name. Elsehwere, Humberstone~\citeyearpar{Humberstone:2005aa} discusses internal uniform substitution. 
} 
One such example is depth substitutions, studied by Logan~\citeyearpar{logan2021}, where the formula assigned to an (occurrence of an) atom can vary with the depth of that (occurrence of that) atom. Many relevant logics that enjoy the depth relevance property are, as Logan~\citeyearpar{logan2021} showed, also closed under depth substitutions, and so are \emph{hyperformal}, to use Logan's term.\footnote{In spite of the claims in Logan~\citeyearpar{logan2021}, the converse of this claim does not hold---see Logan~\citeyearpar{logan2023correction} for more.} Occurrences of the same atom at different depths are different atoms, from the point of view of the logic. 
Further developing the themes above, we will consider additional forms of non-uniform substitutions, giving rise to another form of hyperformality, and show that logics that are closed under these also enjoy related variable sharing properties. 

We will close this introduction with a brief overview of the paper. In the next section, we will briefly define the standard relevant logics of interest, $\logic{BM} $ and $\logic{B}$. Then, in Section \ref{Section:Lericones}, we will define the key concept of a lericone. We show that $\logic{BM} $ and $\logic{B}$ are closed under certain classes of lericone-sensitive substitutions, and, following this, we discuss the philosophical significance of our results, and variable sharing properties more generally. In Section \ref{Section:CLV}, we will study the largest sublogic of classical logic that is closed under lericone-sensitive substitutions, which we call $\logic{CLV}$. We prove that $\logic{CLV}$ enjoys variable sharing, using a new proof technique, along with a related system $\mathbf{CLV}^{\sim}$ closed under a more restricted class of substitutions. We show that the consequence relation for $\logic{CLV}$ is compact, and in Section \ref{Section:Tableaux}, we give tableau systems for $\logic{CLV}$ and $\mathbf{CLV}^{\sim}$, whose adequacy we prove. Finally, we close in Section \ref{Section:Conclusion} with some directions for future work.

\section{Setup}

We work in a standard propositional language $\LL$ formulated using the set of atomic formulas $\At=\{p_i\}_{i=1}^\infty$ and the connectives $\land$, $\lor$, $\to$, and $\neg$ each with their expected arities. We identify (as a matter of convenience and \textit{not} as a matter of philosophical conviction) logics with their sets of theorems. There are two logics we will be concerned with throughout: $\mathbf{BM}$ and $\mathbf{B}$.

The logic $\mathbf{BM}$  received its most extensive examination by Fuhrmann~\citeyearpar{fuhrmann1988} and was discussed by 
Priest and Sylvan \citeyearpar{priest1992a}.\footnote{We note that Fuhrmann~\citeyearpar[27]{fuhrmann1988} says that the then-planned sequel to \emph{Relevant Logics and Their Rivals} would take $\mathbf{BM}$ as the basic relevant logic, rather than $\mathbf{B}$, which indicates that $\mathbf{BM}$ was known and studied by relevant logicians of the time.} 
It has received a spate of recent interest, being mentioned not only by Ferguson and Logan \citeyearpar{FergusonForthcoming-FERTTA-7}, but also by Standefer \citeyearpar{standefer2019tracking, standefer2023weak}, Tedder and Bilkov\'a \citeyearpar{tedder2022relevant}, and Ferenz and Tedder \citeyearpar{ferenz2023neighbourhood}. It is axiomatized as follows: 
\begin{enumerate}[({A}1)]
    \item $A\to A$
    \item $A\land B\to A,A\land B\to B$
    \item $A\to A\lor B, B\to A\lor B$
    \item $A\land(B\lor C)\to(A\land B)\lor(A\land C)$
    \item $((A\to B)\land(A\to C))\to(A\to(B\land C))$
    \item $((A\to C)\land(B\to C))\to((A\lor B)\to C)$
    \item $\neg(A\land B)\to(\neg A\lor\neg B)$
    \item $(\neg A\land\neg B)\to\neg(A\lor B)$
\end{enumerate}
\begin{enumerate}[({R}1)]    
    \item $A,B\Rightarrow A\land B$
    \item $A,A\to B\Rightarrow B$
    \item $A\to B\Rightarrow\neg B\to\neg A$
    \item $A\to B,C\to D\Rightarrow(B\to C)\to(A\to D)$
\end{enumerate}

The logic $\mathbf{B}$ is $\BM$ with one new axiom and one new rule:
\begin{itemize}
    \item[(A9)] $\neg\neg A\to A$
    \item[(R5)] $A\to\neg B\Rightarrow B\to\neg A$
\end{itemize}
Note that with these additions, Axioms A7 and A8 as well as rule R3 are superfluous. $\mathbf{B}$ was originally thought of, by Routley et al.~\citeyearpar{routley1982}, as something like a basement level relevant logic. It has received a good deal of philosophical attention over the years, and plays an important role in both \citet{Read1988} and \citet{Logan:2023aa}.

\section{Lericones}\label{Section:Lericones}

\begin{mdef}
    We define the set of lericone sequences, $\LRCN$, as follows:
    \begin{itemize}
        \item The empty sequence, $\varepsilon$, is a lericone sequence.
        \item The one-member sequence $c$ is a lericone sequence.
        \item If $\overline{x}$ is a lericone sequence, so are $l\overline{x}$, $r\overline{x}$, and $n\overline{x}$.
        \item Nothing else is a lericone sequence.
    \end{itemize}
\end{mdef}

We let $\LRN$ be the set subset of all $c$-free members of $\LRCN$---equivalently, $\LRN$ is the set of all finite sequences from $\{l,r,n\}$. The word `lericone' is a mnemonic portmanteau: it stands for `LEft, RIght, COnditional, NEgation'. This connection is explained by the next definition:

\begin{mdef}
    The set of partial lericone parsing trees of a formula $A$ is defined as follows:
    \begin{itemize}\setlength\itemsep{1ex}
        \item $\underset{\varepsilon}{A}$ is a partial lericone parsing tree for $A$.
        \item Given a (possibly empty) tree $T$ and $*\in\{\land,\lor\}$, if 
        \xymatrix@=1mm{
            T\ar[d]\\ 
            \underset{\overline{x}}{B*C}
        } 
        is a partial lericone parsing tree for $A$, then so is  
        \xymatrix@=1mm{
            & T\ar[d] & \\ 
            & \underset{\overline{x}}{B*C}\ar[dr]\ar[dl] & \\
            \underset{\overline{x}}{B} & & \underset{\overline{x}}{C}
        }
        \item Given a (possibly empty) tree $T$, if 
        \xymatrix@=1mm{
            T\ar[d]\\ 
            \underset{\varepsilon}{B\to C}
        } 
        is a partial lericone parsing tree for $A$, then so is  
        \xymatrix@=1mm{
            & T\ar[d] & \\ 
            & \underset{\varepsilon}{B\to C}\ar[dr]\ar[dl] & \\
            \underset{c}{B} & & \underset{c}{C}
        }
        \item Given a (possibly empty) tree $T$ and $\overline{x}\neq\varepsilon$, if 
        \xymatrix@=1mm{
            T\ar[d]\\ 
            \underset{\overline{x}}{B\to C}
        } 
        is a partial lericone parsing tree for $A$, then so is  
        \xymatrix@=1mm{
            & T\ar[d] & \\ 
            & \underset{\overline{x}}{B\to C}\ar[dr]\ar[dl] & \\
            \underset{l\overline{x}}{B} & & \underset{r\overline{x}}{C}
        }
        \item Given a (possibly empty) tree $T$, if 
        \xymatrix@=1mm{
            T\ar[d]\\ 
            \underset{\overline{x}}{\neg B}
        } 
        is a partial lericone parsing tree for $A$, then so is  
        \xymatrix@=1mm{
            T\ar[d]\\ 
            \underset{\overline{x}}{\neg B}\ar[d] \\
            \underset{n\overline{x}}{B}
        } 
    \end{itemize}
\end{mdef}

\begin{mdef}
    \textit{The lericone parsing tree} for $A$, $\lpt(A)$ is the maximal partial lericone parsing tree for $A$. 
\end{mdef}

\begin{ex}
    Each of the following is a lericone parsing tree for $\neg p\to(p\to q)$; the rightmost one is $\lpt(\neg p\to(p\to q))$:
    \begin{displaymath}
        \xymatrix@=1mm{
            \underset{\varepsilon}{\neg p\to(p\to q)}
        }
        \qquad
        \xymatrix@=1mm{
            &   \underset{\varepsilon}{\neg p\to(p\to q)}\ar[dr]\ar[dl]    & \\
            \underset{c}{\neg p} &   &\underset{c}{p\to q}
        }
        \qquad
        \xymatrix@=1mm{
            &   \underset{\varepsilon}{\neg p\to(p\to q)}\ar[dr]\ar[dl]   & &\\
            \underset{c}{\neg p}\ar[d] &   &\underset{c}{p\to q}\ar[dr]\ar[dl] & \\
            \underset{nc}{p} & \underset{lc}{p} & & \underset{rc}{q}
        }
    \end{displaymath}
\end{ex}

Note that the sequences occurring under the formulas in $\lpt(A)$ implicitly define a function mapping each occurrence $A[B]$ of $B$ as a subformula of $A$ to a lericone sequence $\lrcn(A[B])$.\footnote{We note that $\lrcn(A[B])$ only makes sense when $B$ occurs in $A$. Our usage is implicitly existential, since we are assuming that the required occurrence is in the formula. 
}

\begin{ex}
    If we underline the occurrence we're interested in, we can list the value of the $\lrcn$ function at each atom-occurrence in the preceding example as follows:
    \begin{itemize}
        \item $\lrcn(\neg\underline{p}\to(p\to q))=nc$
        \item $\lrcn(\neg p\to(\underline{p}\to q))=lc$
        \item $\lrcn(\neg p\to(p\to\underline{q}))=rc$
    \end{itemize}

    A thing to note is that a lericone sequence is always only assigned to a subformula occurrence. So, as an example, while $\lrcn(\underline{p}\to q)=c$, $\lrcn(\neg(\underline{p}\to q))=ln$.
\end{ex}

\begin{mdef}
    For $\overline{x}\in\LRN$, we define the $c$-transform of $\overline{x}$, $t(\overline{x})$ as follows:
    \begin{displaymath}
        t(\overline{x})=
            \left\{\begin{array}{rl}
                \overline{y}c & \text{ if }\overline{x}=\overline{y}l\text{ or }\overline{x}=\overline{y}r \\
                \overline{x} & \text{ otherwise}
            \end{array}\right.
    \end{displaymath}
\end{mdef}

\begin{ex}\label{t_example}
    Write $A[B]\to C$ to mean that $B$ occurs in $A\to C$ as a subformula of $A$, and similarly for $C\to A[B]$, which we will distinguish from $(C\to A)[B]$, where we do not take $B$'s location to be specified. Here's an observation: if $\lrcn(A[B]\to C)=\overline{x}c$ or $\lrcn(C\to A[B])=\overline{x}c$, then $\lrcn(A[B])=t(\overline{x})$. We won't prove this---though it's a good exercise and can be proved by a straightforward induction on $A$---and will instead examine a few examples. 
    \begin{description}
        \item[Example \ref{t_example}a] Here we take $A[B]$ to be $\underline{p}\to p$ and $C$ to be $p$. Now note that $\lrcn((\underline{p}\to p)\to p)=lc$ while $\lrcn(\underline{p}\to p)=c$. Since $c=t(l)$, this confirms the claim.
        \item[Example \ref{t_example}b] Here we take $A[B]$ to be $\neg(p\to\underline{p})$ and $C$ to be $q$. Now note that $\lrcn(q\to\neg(p\to\underline{p}))=rnc$ while $\lrcn(\neg(p\to\underline{p}))=rn$. Since $t(rn)=rn$, this confirms the claim.
    \end{description}
\end{ex}

We leave the proof of the following fact about $c$-transforms to the reader:

\begin{lemma}\mbox{}
    \begin{itemize}
        \item 
        $t(l\overline{x})=\left\{\begin{array}{rl}
        c & \text{ if }\overline{x}=\varepsilon \\
        lt(\overline{x}) & \text{ otherwise }
        \end{array}\right.$ 
        \item 
        $t(r\overline{x})=\left\{\begin{array}{rl}
        c & \text{ if }\overline{x}=\varepsilon \\
        rt(\overline{x}) & \text{ otherwise }
        \end{array}\right.$
        \item $t(n\overline{x})=nt(\overline{x})$.
    \end{itemize}
\end{lemma}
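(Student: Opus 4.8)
The plan is to prove all three clauses by a single case analysis on the last symbol of $\overline{x}$. The point is that $t$ is defined by one top-level case split --- does its argument end in $l$ or in $r$, or in neither? --- and prepending a symbol to a nonempty sequence never changes its last symbol, so there is nothing to induct on; the whole lemma is bookkeeping.

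First I would record two trivial observations. First, for $a\in\{l,r,n\}$ and nonempty $\overline{x}\in\LRN$ the sequence $a\overline{x}$ has the same final symbol as $\overline{x}$, while if $\overline{x}=\varepsilon$ then $a\overline{x}=a$. Second, members of $\LRN$ are $c$-free, so a nonempty $\overline{x}\in\LRN$ ends in exactly one of $l$, $r$, $n$. Together these reduce each clause to the cases $\overline{x}=\varepsilon$, $\overline{x}=\overline{y}l$, $\overline{x}=\overline{y}r$, and $\overline{x}$ ending in $n$.

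For the first clause: if $\overline{x}=\varepsilon$ then $l\overline{x}=l=\varepsilon l$, so the first line of the definition of $t$ gives $t(l)=\varepsilon c=c$; if $\overline{x}=\overline{y}l$ then $l\overline{x}=l\overline{y}l$ also ends in $l$, so $t(l\overline{x})=l\overline{y}c$ while $t(\overline{x})=\overline{y}c$ and hence $lt(\overline{x})=l\overline{y}c$, and the case $\overline{x}=\overline{y}r$ is identical; and if $\overline{x}$ ends in $n$ then so does $l\overline{x}$, so neither triggers the first line of the definition and $t(l\overline{x})=l\overline{x}=lt(\overline{x})$. The second clause is word-for-word the same with $r$ in place of $l$, and the third runs through the same four cases using that prepending $n$ never produces a sequence ending in $l$ or $r$ --- in particular, for $\overline{x}=\varepsilon$ both sides are just $n$. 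There is no real obstacle; the one subtlety worth flagging is that in the first two clauses the case $\overline{x}=\varepsilon$ is genuinely special, since there --- and only there --- prepending $l$ or $r$ creates a fresh trailing $l$ or $r$, which is exactly what forces those clauses to bifurcate while the clause for $n$ does not.
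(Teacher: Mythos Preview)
Your case analysis is correct. The paper does not give a proof of this lemma at all (it states ``We leave the proof of the following fact about $c$-transforms to the reader''), and what you have written is exactly the kind of routine verification the authors had in mind: since $t$ is defined by a case split on the \emph{last} symbol of its argument, and prepending a symbol to a nonempty sequence leaves the last symbol unchanged, the three clauses follow by inspecting the four cases $\overline{x}=\varepsilon$, $\overline{x}=\overline{y}l$, $\overline{x}=\overline{y}r$, and $\overline{x}=\overline{y}n$.
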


Much of our interest in this paper concerns substitutions that are sensitive to lericones. 
To help settle notation and intuitions, we will first define (plain) substitutions.
\begin{mdef}[Plain substitutions]
A \emph{plain substitution} is a function $\At\to\LL$. Given a plain substitution $\sigma$, we extend it to a function $\LL\to\LL$ (which we will also call $\sigma$) as follows:
\begin{itemize}
    \item $\sigma(A\land B)=\sigma(A)\land\sigma(B)$.
    \item $\sigma(A\lor B)=\sigma(A)\lor\sigma(B)$.
    \item $\sigma(\neg A)=\neg\sigma(A)$.
    \item $\sigma(A\to B)=\sigma(A)\to\sigma(B)$
\end{itemize}
\end{mdef}
\begin{mdef}[Lericone substitutions]
A \emph{lericone substitution} is a function $\LRCN\times\At\to\LL$. Given a lericone substitution $\sigma$ we extend it to a function $\LRCN\times\LL\to\LL$ (which we also call $\sigma$) as follows:
\begin{itemize}
    \item $\sigma(\overline{x},A\land B)=\sigma(\overline{x},A)\land\sigma(\overline{x},B)$.
    \item $\sigma(\overline{x},A\lor B)=\sigma(\overline{x},A)\lor\sigma(\overline{x},B)$.
    \item $\sigma(\overline{x},\neg A)=\neg\sigma(n\overline{x},A)$.
    \item $\sigma(\overline{x},A\to B)=\left\{
        \begin{array}{rl}
            \sigma(c,A)\to\sigma(c,B) & \text{ if }\overline{x}=\varepsilon \\
            \sigma(l\overline{x},A)\to\sigma(r\overline{x},B) & \text{ otherwise}.
        \end{array}\right.$
\end{itemize}
\end{mdef}
\noindent We can see plain substitutions as a special class of lericone substitutions, namely those in which $\sigma(\overline{x}, A)=\sigma(\overline{y}, A)$, for all $\overline{x},\overline{y}\in\LRCN$.

We will adapt an important definition from \citet{Leach-Krouse2024-LEALIT-4}, that of atomic injective substitutions. As we will see below, atomic injective substitutions play a crucial role in our results.
\begin{mdef}
    We say that a lericone substitution is \emph{atomic} when its range is a subset of $\At$. We say that a lericone substitution is \emph{atomic injective} when it is atomic and injective as a two-place function.
\end{mdef}
\noindent
It's important to note that all it means when we say a given lericone substitution is atomic is that it maps each member of $\LRCN\times\At$ to a member of $\At$. It most definitely does \textit{not} mean that it(s extension) maps each member of $\LRCN\times\LL$ to a member of $\At$---indeed, a moment's reflection makes it clear that no lericone substitution does that. We also emphasize that, for a given atomic injective lericone substitution $\iota$, if $\overline{x}\neq\overline{y}$, then $\iota(\overline{x},p)\neq \iota(\overline{y}, p)$.

It will be useful on occasion in what follows to have a specific atomic injective on hand. So we construct one via what is essentially G\"odel coding as follows. First, let $g(l)=1$, $g(r)=2$, $g(c)=3$, and $g(n)=4$. Let $\pi_i$ be the $i$th prime. For $\overline{x}=x_1\dots x_n\in\LRCN$, let $g(\overline{x})=\prod_{i=2}^{n+1}\pi_i^{g(x_{i-1})}$. Finally, define $g(\overline{x},p_i)=p_{2^ig(\overline{x})}$. We take it to be clear that $g$ is atomic injective.

\begin{ex}
Consider the formula $\neg p_1\to(p_1\to p_1)$. The lericone sequences for the three occurrences of $p_1$, from left to right, are $nc$, $lc$, and $rc$, respectively. Consequently, 
\[
g(\neg p_1\to(p_1\to p_1))=\neg p_{20250}\to(p_{750}\to p_{2250}),
\]
with the result that the three occurrences of $p_1$ are mapped to different atoms.
\end{ex}

\begin{lemma}\label{transform_lemma}
    If $\sigma$ and $\tau$ are lericone substitutions and for all $p\in\At$ and all $\overline{x}\in\LRN$ we have that $\tau(\overline{x}c,p)=\sigma(t(\overline{x}),p)$, then in fact for all $A\in\LL$ and all $\overline{x}\in\LRN$ we have that $\tau(\overline{x}c,A)=\sigma(t(\overline{x}),A)$.
\end{lemma}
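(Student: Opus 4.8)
The plan is to argue by induction on the structure of $A$, taking the inductive hypothesis to be universally quantified over all $\overline{x}\in\LRN$ simultaneously --- this uniformity is essential, since when $A$ is built from proper subformulas the induction must feed \emph{different} lericone sequences back into the hypothesis. Two small facts are worth recording up front: $\LRN$ is closed under prepending any of $l$, $r$, $n$, so every sequence produced in the argument remains in $\LRN$; and the $c$-transform $t$ preserves the length of its argument, so $t(\overline{x})=\varepsilon$ precisely when $\overline{x}=\varepsilon$.

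The base case ($A$ atomic) is exactly the hypothesis relating $\tau$ and $\sigma$. For $A=B\land C$ and $A=B\lor C$ I would simply unfold the defining clauses --- e.g.\ $\tau(\overline{x}c,B\land C)=\tau(\overline{x}c,B)\land\tau(\overline{x}c,C)$ --- apply the inductive hypothesis to $B$ and to $C$ at the \emph{same} sequence $\overline{x}$, and refold using the matching clause for $\sigma$; these cases are immediate, since $\land$ and $\lor$ leave the lericone index untouched. For $A=\neg B$ the preceding lemma on $c$-transforms enters: from $\tau(\overline{x}c,\neg B)=\neg\tau(n\overline{x}c,B)$ and the inductive hypothesis applied at $n\overline{x}\in\LRN$ we obtain $\neg\sigma(t(n\overline{x}),B)$, whereupon $t(n\overline{x})=n\,t(\overline{x})$ (the third clause of that lemma) together with the negation clause in the definition of a lericone substitution rewrites this as $\sigma(t(\overline{x}),\neg B)$.

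I expect the conditional case to be the only one needing genuine care, because it forces a split on whether $\overline{x}=\varepsilon$. In either subcase one begins from $\tau(\overline{x}c,B\to C)=\tau(l\overline{x}c,B)\to\tau(r\overline{x}c,C)$ --- the ``otherwise'' branch of the definition applies, since $\overline{x}c$ is never empty --- and rewrites the right-hand side, via the inductive hypothesis at $l\overline{x}$ and at $r\overline{x}$, as $\sigma(t(l\overline{x}),B)\to\sigma(t(r\overline{x}),C)$. When $\overline{x}=\varepsilon$, the first two clauses of the $c$-transform lemma give $t(l)=t(r)=c$ while $t(\varepsilon)=\varepsilon$, so this equals $\sigma(c,B)\to\sigma(c,C)=\sigma(\varepsilon,B\to C)=\sigma(t(\overline{x}),B\to C)$. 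When $\overline{x}\neq\varepsilon$, the length remark gives $t(\overline{x})\neq\varepsilon$, the lemma gives $t(l\overline{x})=l\,t(\overline{x})$ and $t(r\overline{x})=r\,t(\overline{x})$, and the ``otherwise'' branch of the substitution definition again delivers $\sigma(t(\overline{x}),B\to C)$. That exhausts the cases and closes the induction. The substantive content is thus concentrated entirely in matching the $c$-transform bookkeeping against the case distinctions in the definition of a lericone substitution; no idea beyond the stated lemma on $t$ is required.
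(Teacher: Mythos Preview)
Your proposal is correct and follows essentially the same route as the paper's proof: induction on $A$, with the conjunction/disjunction cases immediate, the negation case handled via $t(n\overline{x})=n\,t(\overline{x})$, and the conditional case split on $\overline{x}=\varepsilon$ versus $\overline{x}\neq\varepsilon$ using the remaining clauses of the $c$-transform lemma. Your version is in fact slightly more explicit than the paper's --- you note that $\overline{x}c\neq\varepsilon$ so the ``otherwise'' branch applies to $\tau$, and that $t$ is length-preserving so $t(\overline{x})\neq\varepsilon$ when $\overline{x}\neq\varepsilon$ --- both points the paper leaves tacit.
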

\begin{proof}
    By induction on $A$. The hypothesis gives the base case. The inductive cases for conjunction and disjunction are immediate from IH. 

    For negations note that $\tau(\overline{x}c,\neg A)=\neg\tau(n\overline{x}c,A)$. But by IH, this is $\neg\sigma(t(n\overline{x}),A)=\neg\sigma(nt(\overline{x}),A)=\sigma(t(\overline{x}),\neg A)$.

    For conditionals first note that $\tau(\overline{x}c,A\to B)=\tau(l\overline{x}c,A)\to\tau(r\overline{x}c,B)$, and by IH this is $\sigma(t(l\overline{x}),A)\to\sigma(t(r\overline{x}),B)$. We now consider two cases.

    If $\overline{x}=\varepsilon$, then by the above equality, $\tau(c,A\to B)=\sigma(t(l),A)\to\sigma(t(r), B)$. But $t(l)=t(r)=c$, so this becomes $\sigma(c,A)\to\sigma(c,B)=\sigma(\varepsilon,A\to B)=\sigma(t(\varepsilon),A\to B)$ as required.

    If $\overline{x}\neq\varepsilon$, then by the above equality, $\tau(\overline{x}c,A\to B)=\sigma(t(l\overline{x}),A)\to\sigma(t(r\overline{x}),B)$. By Lemma~\ref{transform_lemma}, $t(l\overline{x})=lt(\overline{x})$ and $t(r\overline{x})=rt(\overline{x})$. So this becomes $\sigma(lt(\overline{x}),A)\to\sigma(rt(\overline{x}),B)=\sigma(t(\overline{x}),A\to B)$ as required.
\end{proof}

\begin{mdef}
    If $\sigma$ is a lericone substitution, then we define the lericone substitution $t(\sigma)$ as follows:
    \begin{displaymath}
        t(\sigma)(\overline{y},p) =\left\{
            \begin{array}{lr}
                \sigma(t(\overline{x}),p) & \text{ if }\overline{y}=\overline{x}c \\
                p & \text{otherwise}
            \end{array}\right.
    \end{displaymath}
\end{mdef}

\begin{cor}[Corollary to Lemma~\ref{transform_lemma}]
    For all $\overline{x}\in\LRN$ and all $A\in\LL$, $t(\sigma)(\overline{x}c,A)=\sigma(t(\overline{x}),A)$.
\end{cor}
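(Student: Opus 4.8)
The plan is to read this off as an immediate instance of Lemma~\ref{transform_lemma}, taking $\tau := t(\sigma)$. To apply that lemma I only need to verify its hypothesis for this choice of $\tau$: that $t(\sigma)(\overline{x}c,p)=\sigma(t(\overline{x}),p)$ for every $p\in\At$ and every $\overline{x}\in\LRN$. The point is that $t(\sigma)$ was defined precisely so as to make this true, so there is essentially nothing to do beyond unwinding the definition.

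First I would fix $\overline{x}\in\LRN$ and $p\in\At$ and observe that, since every member of $\LRN$ is $c$-free, the sequence $\overline{x}c$ contains exactly one occurrence of $c$, in final position. Hence $\overline{x}c$ admits a unique decomposition of the form $\overline{z}c$ with $\overline{z}$ $c$-free, namely $\overline{z}=\overline{x}$. Consequently the first clause in the definition of $t(\sigma)$ applies, with witness $\overline{x}$, and yields $t(\sigma)(\overline{x}c,p)=\sigma(t(\overline{x}),p)$. (The ``otherwise'' clause, which would return $p$, is never triggered here, precisely because $\overline{x}c$ is of the required shape.) This establishes the hypothesis of Lemma~\ref{transform_lemma} for $\sigma$ and $\tau=t(\sigma)$.

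Then I would simply invoke Lemma~\ref{transform_lemma}: it gives $t(\sigma)(\overline{x}c,A)=\sigma(t(\overline{x}),A)$ for all $A\in\LL$ and all $\overline{x}\in\LRN$, which is exactly the statement of the corollary. There is no real obstacle to this argument; the only point that warrants a line of justification is the well-definedness of the case split in the definition of $t(\sigma)$, i.e.\ that no sequence is of the form $\overline{x}c$ for two distinct $\overline{x}\in\LRN$, and that is immediate from $c$-freeness of members of $\LRN$ together with left-cancellation for sequence concatenation.
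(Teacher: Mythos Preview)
Your proposal is correct and is precisely the intended route: the paper states this as an immediate corollary with no proof, and you have simply spelled out why---take $\tau:=t(\sigma)$, observe that the defining clause of $t(\sigma)$ is exactly the hypothesis of Lemma~\ref{transform_lemma}, and apply the lemma. The only quibble is terminological: the uniqueness of $\overline{x}$ in $\overline{x}c$ is \emph{right}-cancellation (or simply that the final symbol determines the decomposition), not left-cancellation, but this does not affect the argument.
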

\noindent Note that in the definition of $t(\sigma)$, nothing important is happening with the `otherwise' clause and in fact this can be chosen arbitrarily with no changes in what follows. Note also that $t(\sigma)$ is, in effect, a lericone substitution that simply treats each sequence as though its terminal `$c$', if it has one, isn't there. So it, in a certain sense, ignores parts of the sequence it's being handed. We will also need to do the opposite, and \textit{add in} an additional sequence sometimes. To that end, we have the following lemma whose proof is sufficiently like the proof of Lemma~\ref{transform_lemma} that we leave it to the reader:

\begin{lemma}\label{adding}
    Let $\sigma$ and $\tau$ be lericone substitutions, $\overline{y}\in\LRN$, and let $\tau(\overline{x}c,p)=\sigma(\overline{xy}c,p)$ for all $p\in\At$ and $\overline{x}\in\LRN$. Then for all $\overline{x}\in\LRN$ and all $A$, $\tau(\overline{x}c,A)=\sigma(\overline{xy}c,A)$.
\end{lemma}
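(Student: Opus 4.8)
The plan is to mirror the proof of Lemma~\ref{transform_lemma}, proceeding by induction on the structure of $A$ while keeping $\overline{x}$ universally quantified over all of $\LRN$ throughout (so that the induction hypothesis may be reinstantiated at modified sequences). Before starting, I would record the one piece of bookkeeping that makes everything line up: prepending a single letter to a sequence commutes with appending a fixed suffix, i.e.\ for $z\in\{l,r,n\}$ and $\overline{x},\overline{y}\in\LRN$ we have $(z\overline{x})\overline{y}=z\overline{xy}$ and $(z\overline{x})c=z(\overline{x}c)$. These are just instances of associativity of sequence concatenation, and they are what let the recursive clauses for $\tau$ and $\sigma$ match up.

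The base case, $A$ atomic, is precisely the hypothesis. For $A=B*C$ with $*\in\{\land,\lor\}$, the connective clause for lericone substitutions leaves the sequence untouched, so $\tau(\overline{x}c,B*C)=\tau(\overline{x}c,B)*\tau(\overline{x}c,C)$, and two applications of the induction hypothesis at the same $\overline{x}$ rewrite this as $\sigma(\overline{xy}c,B)*\sigma(\overline{xy}c,C)=\sigma(\overline{xy}c,B*C)$.

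The negation and conditional cases are where the sequence changes, and here the bookkeeping fact is used. For $A=\neg B$: $\tau(\overline{x}c,\neg B)=\neg\tau(n\overline{x}c,B)$, and since $n\overline{x}\in\LRN$ the induction hypothesis applies to give $\neg\sigma((n\overline{x})\overline{y}c,B)=\neg\sigma(n\overline{xy}c,B)=\sigma(\overline{xy}c,\neg B)$. For $A=B\to C$: because $\overline{x}c$ ends in $c$ it is never $\varepsilon$, so the conditional clause yields $\tau(\overline{x}c,B\to C)=\tau((l\overline{x})c,B)\to\tau((r\overline{x})c,C)$; applying the induction hypothesis at $l\overline{x}$ and at $r\overline{x}$ turns this into $\sigma(l\overline{xy}c,B)\to\sigma(r\overline{xy}c,C)$, and since $\overline{xy}c\neq\varepsilon$ as well, this is exactly $\sigma(\overline{xy}c,B\to C)$.

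I do not expect a genuine obstacle. The one point worth flagging is that, unlike Lemma~\ref{transform_lemma}, which needed a split on whether $\overline{x}=\varepsilon$ in order to track the $c$-transform $t$, no case analysis is needed here: every sequence that appears is $c$-terminated and hence nonempty, so the conditional clause is always resolved via its ``otherwise'' branch. The only thing to be careful about is keeping the quantifier over $\overline{x}$ outside the induction, so that the hypothesis can legitimately be invoked at $n\overline{x}$, $l\overline{x}$, and $r\overline{x}$.
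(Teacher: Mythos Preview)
Your proposal is correct and matches the paper's approach: the paper explicitly leaves this proof to the reader, noting only that it is ``sufficiently like the proof of Lemma~\ref{transform_lemma},'' which is exactly the induction on $A$ you carry out. Your observation that no $\overline{x}=\varepsilon$ case split is needed (because every sequence in play is $c$-terminated and hence nonempty) is a helpful clarification of why this argument is in fact slightly simpler than that of Lemma~\ref{transform_lemma}.
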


\begin{mdef}
    If $\sigma$ is a lericone substitution and $\overline{y}\in\LRN$, then we define the lericone substitution $\sigma^{\overline{y}}$ as follows:
    \begin{displaymath}
        \sigma^{\overline{y}}(\overline{z},p) =\left\{
            \begin{array}{lr}
                \sigma(\overline{xy}c,p) & \text{ if }\overline{z}=\overline{x}c \\
                p & \text{otherwise}
            \end{array}\right.
    \end{displaymath}
\end{mdef}

\begin{cor}[Corollary to Lemma~\ref{adding}]
    For all $\overline{x}\in\LRN$ and all $A\in\LL$, $\sigma^{\overline{y}}(\overline{x}c,A)=\sigma(\overline{xy}c,A)$.
\end{cor}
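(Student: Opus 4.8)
The plan is to read this off directly from Lemma~\ref{adding} by instantiating its $\tau$ with $\sigma^{\overline{y}}$. The only content is checking that the hypothesis of Lemma~\ref{adding} is met at the atomic level, which is immediate from the definition of $\sigma^{\overline{y}}$.

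In detail: first I would observe that for any $\overline{x}\in\LRN$ the sequence $\overline{x}c$ is indeed a member of $\LRCN$, and that the decomposition ``$\overline{z}=\overline{x}c$ with $\overline{x}\in\LRN$'' appearing in the definition of $\sigma^{\overline{y}}$ is well posed: by the grammar of $\LRCN$, every lericone sequence is a (possibly empty) word over $\{l,r,n\}$ optionally followed by a single terminal $c$, so $c$ occurs at most once in any member of $\LRCN$ and only in final position; hence when $\overline{z}$ does end in $c$ the $c$-free stem $\overline{x}$ is uniquely determined. Consequently the first clause of the definition of $\sigma^{\overline{y}}$ fires at $\overline{z}=\overline{x}c$, giving $\sigma^{\overline{y}}(\overline{x}c,p)=\sigma(\overline{xy}c,p)$ for every $p\in\At$ and every $\overline{x}\in\LRN$.

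This is precisely the hypothesis of Lemma~\ref{adding} with $\tau:=\sigma^{\overline{y}}$ (and the same $\overline{y}$). Applying that lemma yields $\sigma^{\overline{y}}(\overline{x}c,A)=\sigma(\overline{xy}c,A)$ for all $A\in\LL$ and all $\overline{x}\in\LRN$, which is the statement to be proved. (The argument is the exact analogue of the Corollary to Lemma~\ref{transform_lemma}, with $t(\sigma)$ replaced by $\sigma^{\overline{y}}$ and $t(\overline{x})$ replaced by $\overline{xy}$.)

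There is no real obstacle here: the corollary is a one-line consequence of Lemma~\ref{adding}, and the only point requiring a moment's care — that the case split defining $\sigma^{\overline{y}}$ is unambiguous — is settled by the shape of lericone sequences. If anything, the ``main step'' is simply recognizing that Lemma~\ref{adding} has been stated in exactly the form needed to discharge the induction once, so that it need not be redone.
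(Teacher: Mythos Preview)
Your proposal is correct and is exactly the intended argument: the paper states this corollary without proof precisely because it follows immediately from Lemma~\ref{adding} by taking $\tau:=\sigma^{\overline{y}}$, the atomic hypothesis being built into the definition of $\sigma^{\overline{y}}$. Your extra remark about the unambiguity of the case split in that definition is a fair point of hygiene but not something the paper pauses over.
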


\begin{theorem}\label{BM_invariance}
    If $A'$ is a theorem of $\BM$, then for all lericone substitutions $\sigma$, $\sigma(\varepsilon,A')$ is a theorem of $\BM$ as well.
\end{theorem}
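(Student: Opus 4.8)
The plan is to proceed by induction on the length of a $\BM$-derivation of $A'$, proving the statement uniformly in $\sigma$: that is, at each stage I would show that $\sigma(\varepsilon,A')$ is a $\BM$-theorem for \emph{every} lericone substitution $\sigma$ at once. It is precisely this uniformity that makes the rule cases go through, since in those cases I will want to feed the inductive hypothesis a substitution built from $\sigma$ rather than $\sigma$ itself.

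\emph{Axioms.} For each of A1--A8 I would simply compute $\sigma(\varepsilon,\cdot)$ of an instance using the defining clauses of a lericone substitution and check that the result is again an instance of the same scheme. What makes this work is that in every $\BM$-axiom all occurrences of a given metavariable are reached along one and the same lericone sequence once the outermost (necessarily $\to$-headed) node has been unpacked: in A1--A4 each metavariable is reached along $c$; in A5 all three occurrences of ``$A$'' are reached along $lc$ and both occurrences of ``$B$'' and ``$C$'' along $rc$; in A6 similarly ``$A$'', ``$B$'' along $lc$ and ``$C$'' along $rc$; and in A7--A8 the metavariables are reached along $nc$. Hence $\sigma(\varepsilon,\cdot)$ of an instance is the corresponding scheme instance with each metavariable replaced by the appropriate $\sigma(\overline{x},\cdot)$-image, and so is again a theorem. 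I expect this part to be purely mechanical.

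\emph{Rules.} R1 is immediate, since $\sigma(\varepsilon,A\land B)=\sigma(\varepsilon,A)\land\sigma(\varepsilon,B)$, so the inductive hypothesis on the two premises together with one application of R1 suffices. The substantive cases are R2, R3, and R4, and here one cannot feed $\sigma$ directly to the premises: when $\sigma(\varepsilon,\cdot)$ unpacks the principal $\to$ of the conclusion it assigns the immediate subformulas the sequence $c$ (and then, descending, sequences like $lc$, $rc$, $nc$) rather than $\varepsilon$. The remedy is to apply the inductive hypothesis to the premises with the pre-processed substitutions supplied by the earlier lemmas on $t(\sigma)$ and $\sigma^{\overline{y}}$:
\begin{itemize}
\item For R2 ($A,A\to B\Rightarrow B$): apply the hypothesis to $A\to B$ with $t(\sigma)$. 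By the Corollary to Lemma~\ref{transform_lemma} at $\overline{x}=\varepsilon$ (noting $t(\varepsilon)=\varepsilon$) we get $t(\sigma)(c,C)=\sigma(\varepsilon,C)$ for every $C$, so $t(\sigma)(\varepsilon,A\to B)=t(\sigma)(c,A)\to t(\sigma)(c,B)=\sigma(\varepsilon,A)\to\sigma(\varepsilon,B)$ is a theorem; since $\sigma(\varepsilon,A)$ is a theorem by the hypothesis on $A$, R2 delivers $\sigma(\varepsilon,B)$.
\item For R3 ($A\to B\Rightarrow\neg B\to\neg A$): apply the hypothesis to $A\to B$ with $\sigma^{n}$. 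By the Corollary to Lemma~\ref{adding} we get $\sigma^{n}(c,C)=\sigma(nc,C)$, so $\sigma^{n}(\varepsilon,A\to B)=\sigma(nc,A)\to\sigma(nc,B)$ is a theorem, and R3 then yields $\neg\sigma(nc,B)\to\neg\sigma(nc,A)$, which is exactly $\sigma(\varepsilon,\neg B\to\neg A)$.
\item For R4 ($A\to B,\,C\to D\Rightarrow(B\to C)\to(A\to D)$): apply the hypothesis to $A\to B$ with $\sigma^{l}$ and to $C\to D$ with $\sigma^{r}$; by the Corollary to Lemma~\ref{adding} this gives the theorems $\sigma(lc,A)\to\sigma(lc,B)$ and $\sigma(rc,C)\to\sigma(rc,D)$, and R4 applied to these two produces $(\sigma(lc,B)\to\sigma(rc,C))\to(\sigma(lc,A)\to\sigma(rc,D))$, which is precisely $\sigma(\varepsilon,(B\to C)\to(A\to D))$.
\end{itemize}

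The hard part is the conceptual point just isolated: the induction fails outright with the naive substitution, and the work is in seeing that the fix is to shift or collapse the trailing $c$ of the relevant lericone sequences, which is exactly what $\sigma^{\overline{y}}$ and $t(\sigma)$ were arranged to do. Once that is in hand, each rule becomes a two-line computation with the corollaries to Lemmas~\ref{transform_lemma} and~\ref{adding}, and the axiom cases are routine verifications.
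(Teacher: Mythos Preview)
Your proposal is correct and follows essentially the same route as the paper's proof: induction on derivations, with the axiom cases handled by observing that all occurrences of each metavariable share a single lericone sequence, and the rule cases R2, R3, R4 handled by feeding the inductive hypothesis the pre-processed substitutions $t(\sigma)$, $\sigma^{n}$, $\sigma^{l}$, $\sigma^{r}$ respectively and invoking the corollaries to Lemmas~\ref{transform_lemma} and~\ref{adding}. The only cosmetic difference is that you state the uniformity in $\sigma$ explicitly up front, whereas the paper leaves it implicit in the ``for all lericone substitutions'' quantifier.
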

\begin{proof}
    By induction on the derivation of $A'$.\footnote{We've used the metavariable $A'$ here because the metavariables $A$, $B$, $C$, and $D$ are already in use, $E$ (in the form of $\mathbf{E}$) is mostly already taken, $F$ (in the form of $\mathsf{F}$) is too, and none of us could remember what the next letter after that was.} One can quickly verify by inspection that for each axiom, applying a lericone substitution returns an instance of the same axiom. We will look at two examples to illustrate. To see that applying lericone substitutions to (A7) results in more instances of (A7), note that the $\LRCN$-sequence for each displayed $A$ and $B$ is $nc$, so $\sigma(\varepsilon, \neg(A\land B)\to(\neg A\lor\neg B))=\neg(\sigma(nc, A)\land \sigma(nc, B))\to(\neg\sigma(nc, A)\lor\neg\sigma(nc, B))$, which is an instance of (A7). Similarly, for (A5), note that the lericone sequences for the displayed occurrences of $A$ are all $lc$ and the lericone sequences for the displayed occurrences of $B$ and $C$ are all $rc$. By reasoning similar to that for (A7), the result of applying a lericone substitution to (A5) is another instance of (A5).

    The case where the last rule applied in the proof was R1 is immediate. Suppose the last rule was R2, and let $\sigma$ be a lericone substitution. By IH applied to $A$, $\sigma(\varepsilon,A)$ is a theorem of $\BM$.\footnote{Throughout, `IH' abbreviates `the inductive hypothesis'. We will seldom take the time to enunciate exactly what IH is, since it will always be clear from context.} By IH applied to $A\to B$, $t(\sigma)(\varepsilon,A\to B)$ is a theorem of $\BM$. So $t(\sigma)(c,A)\to t(\sigma)(c,B)$ is a theorem of $\BM$. But by Lemma~\ref{transform_lemma}, $t(\sigma)(c,A)=\sigma(\varepsilon,A)$ and $t(\sigma)(c,B)=\sigma(\varepsilon,B)$. So $\sigma(\varepsilon,A)\to\sigma(\varepsilon,B)$ is a theorem of $\BM$. Thus, $\sigma(\varepsilon,B)$ is a theorem of $\BM$, as required. 

    Suppose the last rule applied was R3 and let $\sigma$ be a lericone substitution. By IH, $\sigma^n(\varepsilon,A\to B)$ is a theorem of $\BM$. Thus $\sigma^n(c,A)\to\sigma^n(c,B)$ is a theorem of $\BM$. But then by Lemma~\ref{adding}, $\sigma(nc,A)\to\sigma(nc,B)$ is a theorem of $\BM$, whence so also is $\neg\sigma(nc,B)\to\neg\sigma(nc,A)=\sigma(\varepsilon,\neg B\to\neg A)$ as required. 

    Suppose the last rule applied was R4 and let $\sigma$ be a lericone substitution. By IH, both $\sigma^l(\varepsilon,A\to B)$ and $\sigma^r(\varepsilon,C\to D)$ are theorems of $\BM$. Thus $\sigma^l(c,A)\to\sigma^l(c,B)=\sigma(lc,A)\to\sigma(lc,B)$ and $\sigma^r(c,C)\to\sigma^r(c,D)=\sigma(rc,C)\to\sigma(rc,D)$ are theorems. But then so is the following:
    \begin{displaymath}
        (\sigma(lc,B)\to\sigma(rc,C))\to(\sigma(lc,A)\to\sigma(rc,D))
    \end{displaymath}
    But this just is $\sigma(\varepsilon,((B\to C)\to(A\to D))$.
\end{proof}

\begin{cor}\label{BM_var_shar}
    If $A\to B$ is a theorem of $\BM$, then there is a variable $p$, an $\overline{x}\in\LRN$, and occurrences $A[p]$ of $p$ in $A$ and $B[p]$ of $p$ in $B$ so that $\lrcn(A[p]\to B)=\lrcn(A\to B[p])=\overline{x}c$. Thus $p$ occurs under the same lrn-sequence in both $A$ and $B$. 
\end{cor}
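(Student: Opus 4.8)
The plan is to reduce lericone relevance to ordinary variable sharing by pushing $A\to B$ forward along an atomic injective lericone substitution and then pulling the resulting shared atom back. Fix once and for all an atomic injective lericone substitution $\iota$; the coding substitution $g$ constructed above will serve. Given that $A\to B$ is a theorem of $\BM$, Theorem~\ref{BM_invariance} tells us that $\iota(\varepsilon,A\to B)$ is a theorem of $\BM$ too, and by the definition of lericone substitution $\iota(\varepsilon,A\to B)=\iota(c,A)\to\iota(c,B)$.

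Next I would invoke the fact that $\BM$ enjoys \emph{ordinary} variable sharing. This is available off the shelf: every theorem of $\BM$ is a theorem of $\logic{R}$ (rule R4 is derivable in $\logic{R}$ from suffixing, prefixing, and transitivity, and the remaining axioms and rules are patently $\logic{R}$-provable), so \citet{belnap1960}'s theorem applies; alternatively one may cite \citet{FergusonForthcoming-FERTTA-7} directly. Hence $\iota(c,A)$ and $\iota(c,B)$ share a propositional atom, say $q$.

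The conceptual heart of the argument --- and the step I expect to need the most care --- is the observation that the sequence threaded through the recursion defining $\iota(c,A)$ at a given atom-occurrence $A[p_i]$ of $A$ is exactly $\lrcn(A[p_i]\to B)$, i.e.\ the lericone label that the antecedent branch of $\lpt(A\to B)$ assigns to that occurrence. Indeed, the clauses defining $\sigma(\overline{x},\cdot)$ send $c$ to both immediate subformulas of a conditional labelled $\varepsilon$, send $l\overline{x}$ and $r\overline{x}$ to those of a conditional labelled $\overline{x}\neq\varepsilon$, send $n\overline{x}$ under a negation labelled $\overline{x}$, and leave $\overline{x}$ unchanged under $\land$ and $\lor$ --- and these clauses match the rules generating partial lericone parsing trees verbatim; since the antecedent subtree of $\lpt(A\to B)$ is rooted at $A$ with label $c$, the matching carries over. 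A routine induction on $A$ in the style of Lemma~\ref{transform_lemma} makes this precise and shows that the atoms occurring in $\iota(c,A)$ are precisely the atoms $\iota(\lrcn(A[p_i]\to B),p_i)$ as $A[p_i]$ ranges over atom-occurrences in $A$; symmetrically, the atoms of $\iota(c,B)$ are the $\iota(\lrcn(A\to B[p_j]),p_j)$.

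It then only remains to pull $q$ back. Writing $q$ both as $\iota(\lrcn(A[p_i]\to B),p_i)$, for some atom-occurrence in $A$, and as $\iota(\lrcn(A\to B[p_j]),p_j)$, for some atom-occurrence in $B$, and using that $\iota$ is injective as a two-place function, we get $p_i=p_j$ --- call this atom $p$ --- together with $\lrcn(A[p]\to B)=\lrcn(A\to B[p])$. This common lericone sequence ends in $c$, since the occurrence lies strictly below the main connective of $A\to B$; so it has the form $\overline{x}c$ with $\overline{x}\in\LRN$, which is the displayed claim. For the final sentence, the observation recorded in Example~\ref{t_example} gives $\lrcn(A[p])=t(\overline{x})=\lrcn(B[p])$, so $p$ does indeed occur under one and the same sequence in $A$ and in $B$.
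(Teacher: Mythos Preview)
Your proposal is correct and follows essentially the same route as the paper: apply an atomic injective lericone substitution (the paper uses the G\"odel coding $g$ specifically), invoke ordinary variable sharing for $\mathbf{R}$ on the resulting conditional $g(c,A)\to g(c,B)$, and then decode the shared atom using injectivity. You are more explicit than the paper about why the atoms of $\iota(c,A)$ are exactly the $\iota(\lrcn(A[p_i]\to B),p_i)$ and about the final-sentence appeal to Example~\ref{t_example}, but the underlying argument is the same.
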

\begin{proof}
    Let $A\to B$ be a theorem of $\BM$ and $g$ be the G\"odel substitution defined above. By Theorem~\ref{BM_invariance}, $g(\varepsilon,A\to B)$ is a theorem of $\BM$. So $g(\varepsilon,A\to B)=g(c,A)\to g(c,B)$ is also a theorem of $\mathbf{R}$. So some variable---let's say $p_{2^ig(\overline{x})}$ occurs simultaneously in both $g(c,A)$ and $g(c,B)$. It follows that $p_i$ occurs under the $\LRN$-sequence $\overline{x}$ in both $A$ and $B$ and thus that $\lrcn(A[p]\to B)=\lrcn(A\to B[p])=\overline{x}c$.
\end{proof}

We note here that this proof of Corollary~\ref{BM_var_shar} is parasitic on the existing variable sharing result for $\mathbf{R}$. We will show below that this parasitism is not essential.

\subsection{$\logic{B}$ is Closed Under Faithful Lericone Substitutions}

$\BM$, as mentioned, is ignominiously located in the subbasement of the relevant world. Here we will show that a very mild modification of the above result lets us push the result from the subbasement to the basement proper.

\begin{mdef}
    Define $\sim'$ to be the relation containing all and only pairs of the form $\langle \overline{x}nn\overline{y}, \overline{xy} \rangle$. Let $\sim$ be the equivalence relation generated by $\sim'$, $\underline{LRCN}$ be the set of equivalence classes of $\LRCN$ under $\sim$, and $\underline{LRN}$ be the set of equivalence class of $\LRN$ under $\sim$. We say that a lericone substitution $\sigma$ is \textit{faithful} when $\sigma(\overline{x},A)=\sigma(\overline{y},A)$ whenever $\overline{x}\sim\overline{y}$. 
\end{mdef}

\begin{theorem}
    If $A'$ is a theorem of $\mathbf{B}$, then for all faithful lericone substitutions $\sigma$, $\sigma(\varepsilon,A')$ is a theorem of $\mathbf{B}$ as well.
\end{theorem}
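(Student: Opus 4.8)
The plan is to run the same induction on the derivation of $A'$ in $\mathbf{B}$ that appears in the proof of Theorem~\ref{BM_invariance}, now restricting attention to faithful substitutions throughout and adding cases for the two ingredients of $\mathbf{B}$ absent from $\BM$ — the axiom (A9) $\neg\neg A\to A$ and the rule (R5) $A\to\neg B\Rightarrow B\to\neg A$. The cases for (A1)--(A8) are unchanged: applying any lericone substitution to an axiom returns an instance of the same axiom. For (A9), if $\sigma$ is faithful then $\sigma(\varepsilon,\neg\neg A\to A)=\neg\neg\sigma(nnc,A)\to\sigma(c,A)$; since $nnc\sim c$, faithfulness gives $\sigma(nnc,A)=\sigma(c,A)$, so the result is $\neg\neg\sigma(c,A)\to\sigma(c,A)$, again an instance of (A9).

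For the rules, R1 is immediate, and R2, R3, R4 are handled word-for-word as in Theorem~\ref{BM_invariance}, using the auxiliary substitutions $t(\sigma)$ (for R2) and $\sigma^{n}$, $\sigma^{l}$, $\sigma^{r}$ (for R3, R4), \emph{provided} these auxiliary substitutions are faithful whenever $\sigma$ is — the one new thing needing verification, discussed below. The new rule (R5) is handled in parallel with R3: supposing the last step derives $B\to\neg A$ from $A\to\neg B$ and $\sigma$ is faithful, the inductive hypothesis applied to $A\to\neg B$ with the (faithful) substitution $\sigma^{n}$ tells us that $\sigma^{n}(\varepsilon,A\to\neg B)=\sigma(nc,A)\to\neg\sigma(nnc,B)$ is a theorem of $\mathbf{B}$ (the identification of terms here being the corollary to Lemma~\ref{adding}); since $nnc\sim c$, faithfulness rewrites this as $\sigma(nc,A)\to\neg\sigma(c,B)$, and R5 then yields $\sigma(c,B)\to\neg\sigma(nc,A)$, which is exactly $\sigma(\varepsilon,B\to\neg A)$.

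The main obstacle is the claim that $t(\sigma)$, $\sigma^{n}$, $\sigma^{l}$, $\sigma^{r}$ remain faithful when $\sigma$ is. This is genuinely more delicate than in the $\BM$ setting, where it was trivial: the map $\overline{x}\mapsto t(\overline{x})$ does not respect $\sim$ — for instance $lnn\sim l$ while $t(lnn)=lnn$ and $t(l)=c$ are not $\sim$-related — so the ``otherwise'' clauses in the definitions of $t(\sigma)$ and $\sigma^{\overline{y}}$, previously arbitrary, can no longer be chosen freely. The key observation making everything go through is that a faithful lericone substitution is automatically invariant under a congruence strictly finer than $\sim$: because $\varepsilon\sim nn$, faithfulness forces $\sigma(\varepsilon,D\to E)=\sigma(c,D)\to\sigma(c,E)$ to equal $\sigma(nn,D\to E)=\sigma(lnn,D)\to\sigma(rnn,E)$, and comparing antecedents and consequents and then using $lnn\sim l$, $rnn\sim r$ yields $\sigma(c,F)=\sigma(l,F)=\sigma(r,F)$ for every formula $F$; an easy induction on $\overline{u}$ upgrades this to $\sigma(\overline{u}c,F)=\sigma(\overline{u}l,F)=\sigma(\overline{u}r,F)$ for all $\overline{u}\in\LRN$. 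With this extra congruence available, one re-proves the faithful analogues of Lemmas~\ref{transform_lemma} and~\ref{adding} and their corollaries, after which the remaining cases of the induction are verbatim copies of the corresponding cases of Theorem~\ref{BM_invariance}. I expect the real work to lie precisely here — pinning down ``otherwise'' clauses for $t(\sigma)$ and $\sigma^{\overline{y}}$ that simultaneously make these substitutions faithful and preserve the identities $t(\sigma)(c,C)=\sigma(\varepsilon,C)$ and $\sigma^{\overline{y}}(\overline{x}c,C)=\sigma(\overline{x}\,\overline{y}c,C)$ on which the R2--R5 cases depend.
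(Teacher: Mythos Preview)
Your proof follows the paper's approach exactly: induction on the derivation, with (A9) and (R5) handled just as the paper does (the paper's (R5) case is word-for-word your argument with $\sigma^{n}$). Where you diverge is in taking seriously a point the paper waves away: the paper simply writes ``for axioms and rules in $\BM$, the result follows from Theorem~\ref{BM_invariance}'' and, even in its own (R5) case, invokes the inductive hypothesis for $\sigma^{n}$ without pausing over whether $\sigma^{n}$ is faithful. You are right that this is not automatic---your example $t(lnn)=lnn\not\sim c=t(l)$ is exactly the obstruction---and your repair via the extra identities $\sigma(\overline{u}c,F)=\sigma(\overline{u}l,F)=\sigma(\overline{u}r,F)$, extracted from formula-level faithfulness using $\varepsilon\sim nn$, is correct and does the job (one terminological slip: the resulting congruence is \emph{coarser} than $\sim$, not finer). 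So you have the paper's argument together with a genuine subtlety the paper glosses over, and the right idea for discharging it.
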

\begin{proof}
    By induction on the proof of $A'$. For axioms and rules in $\BM$, the result follows from Theorem~\ref{BM_invariance}. We're thus left to check the two new additions.

    For the new axiom, we let $\sigma$ be a faithful lericone substitution and compute as follows:
    \begin{align*}
        \sigma(\varepsilon,\neg\neg A\to A) & = \sigma(c,\neg\neg A)\to\sigma(c,A) \\
         & = \neg\neg\sigma(nnc,A)\to\sigma(c,A)
    \end{align*}
    But since $\sigma$ is faithful, $\sigma(nnc,A)=\sigma(c,A)$. Thus the final line of the computation again records an instance of a $\mathbf{B}$-axiom.

    Now suppose the last rule used in the proof was R5. Then $A'=D\to\neg C$, and the last step in the proof concluded this from $C\to\neg D$. Let $\sigma$ be a faithful lericone substitution. By IH, $\sigma^n(\varepsilon,C\to\neg D)$ is a theorem of $\mathbf{B}$ so $\sigma^n(c,C)\to\neg\sigma^n(nc,D)$ is a theorem of $\mathbf{B}$. Thus $\sigma(nc,C)\to\neg\sigma(nnc,D)$ is a theorem of $\mathbf{B}$. So by an application of R5, $\sigma(nnc,D)\to\neg\sigma(nc,C)$ is a theorem of $\mathbf{B}$. Since $\sigma$ is faithful, $\sigma(nnc,D)=\sigma(c,D)$. Thus $\sigma(c,D)\to\neg\sigma(nc,C)=\sigma(\varepsilon,D\to\neg C)$ is a theorem of $\mathbf{B}$ as required.
\end{proof}

We end by stating without proof the corresponding variable sharing result.
\begin{cor}\label{B_var_shar}
    If $A\to B$ is a theorem of $\mathbf{B}$, then there is a variable $p$, an $\overline{x}\in\underline{LRN}$, and occurrences $A[p]$ of $p$ in $A$ and $B[p]$ of $p$ in $B$ so that $\lrcn(A[p]\to B)\sim\lrcn(A\to B[p])\sim\overline{x}c$. Thus $p$ occurs under equivalent $\LRN$-sequences in both $A$ and $B$. 
\end{cor}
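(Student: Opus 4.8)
The plan is to rerun the proof of Corollary~\ref{BM_var_shar} almost verbatim, but with the G\"odel substitution $g$ replaced by a suitably \emph{faithful} substitute $\iota$ and with the theorem that $\mathbf{B}$ is closed under faithful lericone substitutions in place of Theorem~\ref{BM_invariance}. Suppose we have produced a faithful, atomic lericone substitution $\iota$ with the key property that $\iota(\overline{x}c,p)=\iota(\overline{y}c,q)$ implies $p=q$ and $\overline{x}\sim\overline{y}$, for all $\overline{x},\overline{y}\in\LRN$ and $p,q\in\At$. Given a theorem $A\to B$ of $\mathbf{B}$, that theorem makes $\iota(\varepsilon,A\to B)=\iota(c,A)\to\iota(c,B)$ a theorem of $\mathbf{B}$, hence of $\mathbf{R}$ (using that $\mathbf{B}$ is a sublogic of $\mathbf{R}$, exactly as the proof of Corollary~\ref{BM_var_shar} uses this for $\BM$); so since $\mathbf{R}$ enjoys variable sharing, $\iota(c,A)$ and $\iota(c,B)$ contain a common atom $a$. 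A routine induction on formulas shows that every atom of $\iota(c,A)$ has the form $\iota(\overline{w}c,p)$ for an occurrence $A[p]$ with $\lrcn(A[p]\to B)=\overline{w}c$ (because the recursion defining $\iota(c,A)$ descends through $A$ exactly as $\lpt(A\to B)$ does below the antecedent), and likewise every atom of $\iota(c,B)$ has the form $\iota(\overline{w}c,p)$ for an occurrence $B[p]$ with $\lrcn(A\to B[p])=\overline{w}c$. Hence $a=\iota(\overline{w}_Ac,p_i)=\iota(\overline{w}_Bc,p_j)$ for suitable occurrences, so the assumed property of $\iota$ gives $p_i=p_j=:p$ and $\overline{w}_A\sim\overline{w}_B$, whence $\lrcn(A[p]\to B)\sim\lrcn(A\to B[p])\sim\overline{w}_Ac$; taking $\overline{x}$ to be the class of $\overline{w}_A$ in $\underline{LRN}$ finishes the argument.

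So all the work is in constructing $\iota$, and the main obstacle is that faithfulness is subtler than it looks. Since a faithful $\sigma$ must satisfy $\sigma(\varepsilon,C\to D)=\sigma(nn,C\to D)$ and the two sides unfold through different branches of the conditional clause, faithfulness already forces $\sigma(\overline{w}c,-)=\sigma(\overline{w}l,-)=\sigma(\overline{w}r,-)$ for every $\overline{w}\in\LRN$ (a short induction on $\overline{w}$, using the negation and conditional clauses to propagate the base case $\sigma(c,-)=\sigma(l,-)=\sigma(r,-)$), even though $\overline{w}c,\overline{w}l,\overline{w}r$ are pairwise $\sim$-inequivalent. In particular the obvious guess --- let $\iota(\overline{z},p)$ depend only on the $\sim$-normal form of $\overline{z}$ --- is \emph{not} faithful. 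The remedy is to pass to the equivalence relation $\approx$ on $\LRCN$ generated by $\sim$ together with all pairs $\langle\overline{w}c,\overline{w}l\rangle$ and $\langle\overline{w}c,\overline{w}r\rangle$. One checks that $\approx$ is exactly the kernel of the map $N$ that reduces each maximal block of $n$'s modulo $2$ and then, if the resulting sequence ends in $l$ or $r$, rewrites that final letter as $c$; that $N$ is compatible with prepending $n$, $l$, and $r$ and with the $\varepsilon$-clause (if $\overline{y}\approx\varepsilon$ then $\overline{y}\sim\varepsilon$, whence $N(l\overline{y})=N(r\overline{y})=c=N(c)$); and --- the crucial point --- that on $c$-terminated sequences $\approx$ collapses back to $\sim$, since $N(\overline{w}c)$ is just the $\sim$-normal form of $\overline{w}$ with $c$ appended. (One clean way to see this last fact is to identify $\LRCN$ with $M\sqcup Mc$ for the monoid $M=\langle l,r,n\mid n^2=1\rangle$ and observe that the $\approx$-classes are the triples $\{mc,ml,mr\}$, $m\in M$, together with the leftover singleton $\sim$-classes, so that each triple meets the $c$-terminated part in exactly one $\sim$-class.)

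Finally, set $\iota(\overline{z},p):=g(N(\overline{z}),p)$. This is atomic since $g$ is. It is faithful: by induction on $A$ one proves the stronger statement that $\overline{x}\approx\overline{y}$ implies $\iota(\overline{x},A)=\iota(\overline{y},A)$ --- the base case is immediate from $N$ being an $\approx$-invariant, the conjunction and disjunction cases are immediate from the inductive hypothesis, and the negation and conditional cases use precisely the compatibility of $\approx$ with prepending $n$, $l$, $r$ and with the $\varepsilon$-clause recorded above, the only delicate case being the conditional with $\overline{x}=\varepsilon\approx\overline{y}\neq\varepsilon$, which needs $c\approx l\overline{y}$ and $c\approx r\overline{y}$. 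And $\iota$ has the required injectivity: if $\iota(\overline{x}c,p_i)=\iota(\overline{y}c,p_j)$ then $g(N(\overline{x}c),p_i)=g(N(\overline{y}c),p_j)$, so, since $N(\overline{x}c)$ and $N(\overline{y}c)$ are the $\sim$-normal forms of $\overline{x}$ and $\overline{y}$ each followed by $c$ and $g$ is injective, $i=j$ and $\overline{x}\sim\overline{y}$. I expect the bookkeeping for $N$ --- its well-definedness, the prepending identities, and the collapse of $\approx$ to $\sim$ on $c$-terminated sequences --- to be the only part that takes real effort; the rest is a faithful copy of the $\BM$ argument.
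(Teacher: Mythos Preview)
The paper explicitly states this corollary \emph{without proof}, so there is no paper proof to compare against; your task was to supply what the authors omit. Your argument is correct and is the natural faithful analogue of the $\BM$ case.

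One point worth flagging concerns the subtlety you isolate about faithfulness. You are right that if ``faithful'' is read as a condition on \emph{all} formulas, then from $\sigma(\varepsilon,C\to D)=\sigma(nn,C\to D)$ one extracts $\sigma(c,-)=\sigma(l,-)=\sigma(r,-)$, and your $\approx$-based construction is exactly what is needed to produce a genuinely faithful atomic substitution that is still injective on $c$-terminated sequences up to $\sim$. Your verification that $\approx$ collapses to $\sim$ on $c$-terminated sequences, and that $\approx$ is a left congruence for prepending $l$, $r$, $n$ (plus the $\varepsilon$ case), is correct and is the heart of the matter.

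That said, there is a simpler route available if one reads the paper's definition of ``faithful'' as a condition on \emph{atoms only}---which is consistent with how faithfulness is actually invoked in the proof of Theorem~2 (the uses there are $\sigma(nnc,A)=\sigma(c,A)$ and $\sigma(nnc,D)=\sigma(c,D)$, and a short induction shows atom-level faithfulness lifts to all formulas whenever neither sequence is $\varepsilon$). Under that reading one may simply take any injection $\underline{LRCN}\times\At\to\At$ and be done, without introducing $\approx$ at all. So your construction is more elaborate than strictly necessary under the charitable reading of the definition, but it is robust to either reading---and it exposes a genuine ambiguity in the paper's formulation that the authors themselves gloss over.
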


\subsection{Philosophical Reflections}

Variable sharing is multifarious. The above results are the seventh and eighth such results of which the authors are aware. It's natural to wonder whether the proliferation is justified. We think it is, and will review a handful of reasons for thinking this to be so.

For one, variable sharing was historically taken to be a necessary but not sufficient condition on being a relevant logic.\footnote{ See, for example, \citet{sep-logic-relevance}.} 
Standefer \citeyearpar{standefer2024} has proposed using variable sharing, with one other plausible condition, to define the class of relevant logics. If we take this on board, different types of variable sharing would seem to give us different flavors of relevance. 

In a bit more detail, \citet{standefer2024} proposes a three-component definition for the class of relevant logics. The first component is the definition of a \emph{logic}: a logic is a set of formulas closed under plain substitutions. The second component is a definition of the \emph{relevant} portion: a logic \logic{L} is a \emph{proto-relevant} logic iff it satisfies the variable sharing criterion, namely that if $A\to B\in\logic{L}$, then $A$ and $B$ share an atom. For the third component, Standefer defines the relevant \emph{logics} to be the proto-relevant logics that are closed under modus ponens, (R1) above, and adjunction, (R2) above.

By the this definition all the standard relevant logics---including the ones that Anderson and Belnap discussed, such as \logic{R} and \logic{E}---count as relevant logics. But so do many other logics not typically included in the relevant family. For example, linear logic, many connexive logics, and even some non-transitive logics are included in the class, while classical and intuitionistic logic are excluded. Importantly, however, the definition includes logics that are properly stronger than \logic{R}, as well as some that are incomparable with \logic{R}, such as \logic{TMingle}.\footnote{See \citet{Mendez2012-MNDTEP}.}
In fact, below we will isolate another relevant logic incomparable with \logic{R}, as well as with \logic{TMingle}.

Towards the end of his paper, Standefer considers that we might isolate different flavors of relevance by modifying the second component of his definition so as to require some stronger sort of variable sharing. Thus, for example, we might call a set of formulas a \emph{lericone-relevant logic} if it is closed under plain substitutions, satisfies the $\LRCN$-variable sharing criterion, and is closed under modus ponens and adjunction. But what we see here is that there is an alternative way we might introduce new flavors of relevance: rather than strengthen in the second component we can strengthen in the first. 

In more detail, while there is nothing wrong with saying that (\emph{e.g.}) a lericone-relevant logic is a set of formulas that is closed under plain substitutions, satisfies the $\LRCN$-variable sharing criterion, and is closed under modus ponens and adjunction, we might alternatively say that a \emph{relevant lericone-hyperformal logic} is a set of formulas that is is closed under $\LRCN$-substitutions, satisfies the ordinary variable-sharing criterion, and is closed under modus ponens and adjunction. As we will see, it turns out that all relevant lericone-hyperformal logics that are sublogics of classical logic, \logic{CL}, are lericone-relevant logics.

The upshot of this is a general way to identify classes of relevant logics via different senses of formal relevance. 
The logics obtained via the use of lericone substitutions are well-behaved, albeit weak. Valid implications in these logics ensure a very tight connection between antecedent and consequent, much tighter than basic variable sharing, or even depth variable sharing. 

Second, we can turn our attention to Anderson and Belnap's \citeyearpar[p.~33]{anderson1975} explication of relevance as ``common meaning content''. As one intuitive way of failing to share common content is a failure to share \emph{topic}, these increasingly fine-grained notions of variable sharing line up very naturally with the burgeoning theory of topic as investigated by \emph{e.g.} Yablo~\citeyearpar{yablo2014} and Berto~\citeyearpar{berto2022}. As discussed by Ferguson~\citeyearpar{ferguson2024} and Ferguson and Kadlecikova~\citeyearpar{fergusonkadlecikova2024}, lericone relevance reflects an acknowledgement that the topic of a subsentence is affected by features of the intensional connectives in which it is nested and that its topic-theoretic contribution to the complex in which it appears must be evaluated \emph{in situ}.

In $\mathbf{B}$, for example, the formula $(A\rightarrow B)\rightarrow(\neg B\rightarrow\neg A)$ is not provable. In the presence of rule (R3), the failure of the entailment is not a matter of truth conditions, but ought rather to be attributed to a lack of common topic between $A\rightarrow B$ and $\neg B\rightarrow\neg A$. If one accepts the arguments of Ferguson~\citeyearpar{ferguson2023d,ferguson2023e} suggesting that contraposition is not a topic-preserving operation, then this would recommend the topic theory implicit in $\mathbf{B}$.

Likewise, the failure of theoremhood of $\neg\neg A\rightarrow A$ in $\mathbf{BM}$ signals the failure of a thesis concerning the invariance of topic under applications of double negation, namely, the failure of the following:

\vspace{.2cm}

\noindent\emph{Involutive Transparency:} For any sentence $\Phi$, the topics of $\Phi$ and $\neg\neg\Phi$ are identical.

\vspace{.2cm}

\noindent $\mathbf{BM}$, in failing to identify the propositions $A$ and $\neg\neg A$ as having common content, appears to part ways with $\mathbf{B}$ on grounds of involutive transparency in this sense.

Such correspondences in weak relevant logics suggest the value of a very fine-grained and uniform hierarchy of variable sharing properties, as these correspondences align particular classes of relevant logics with particular linguistic views concerning topic transparency. If one's preferred theory of topic accepts Involutive and Chiral Transparency but rejects Conditional Transparency, (e.g.) the results of \cite{logan2021} suggest that a notion of relevance tailored to this theory of topic might be exhibited by the class of strong depth relevant logics. As the theory of topic matures---and determinations are made concerning various transparency theses---a panoply of variable sharing properties will help justify the selection of individual relevant logics.

Finally, as we saw in the introduction, there are good reasons for demanding something like lericone-hyperformality. What we will see below in Section~\ref{sec:CLV_is_relevant} is that lericone-hyperformality is almost enough on its own to ensure a logic enjoys ordinary variable sharing. So in fact, not only is hyperformality a novel path to new flavors of relevance, it's actually a novel path to relevance full stop.

\section{Classical Lericone Invariant Logic}\label{Section:CLV}

We have introduced and motivated the concepts of a lericone and lericone-sensitive substitutions. The natural next question is what logic one gets from these concepts. In this section, we will formulate this question in a precise way, in terms of closure under lericone-sensitive substitutions. We will define a class of assignments that are, likewise, sensitive to lericones and show that the lericone-invariant fragment of \logic{CL} coincides with the set of formulas valid with respect to these assignments. Following this, we will show that the resulting logic---a logic we call $\CLV$---enjoys variable sharing, in fact a new form of variable sharing, and so this logic is a new relevant logic. Its consequence relation is, additionally, compact. While we do not have a Hilbert axiomatization of this logic, we will provide a simple tableau system for it, which will be shown sound and complete. 

\subsection{Introducing and Situating the Logic}

A \emph{lericone-sensitive truth-value assignment} (from here on just an \textit{assignment}) is a function $f:\LRCN\times\At\longrightarrow\{0,1\}$. We extend such an assignment to a function (which we also call $f$) $\LRCN\times\LL\longrightarrow\{0,1\}$ as follows:
\begin{itemize}
    \item $f(\overline{x},A\land B)=\inf(f(\overline{x},A), f(\overline{x},B))$
    \item $f(\overline{x},A\lor B)=\sup(f(\overline{x},A), f(\overline{x},B))$
    \item $f(\overline{x},\neg A)=1-f(n\overline{x},A)$.
    \item $f(\overline{x},A\to B)=\left\{
        \begin{array}{rl}
            \sup(1-f(c\overline{x},A),f(c\overline{x},B)) & \text{ if $\overline{x}$ is $c$-free}\\
            \sup(1-f(l\overline{x},A),f(r\overline{x},B)) & \text{ otherwise}
        \end{array}
        \right.$
\end{itemize}

Given an assignment $f$, a set of sentences $X$ and a sentence $A$, we say that $f\vDash X\Yright A$ just if either $f(\varepsilon,B)=0$ for some $B\in X$ or $f(\varepsilon,A)=1$. We say that $X\Yright A$ is valid and write $X\vDash_{\LRCN} A$ just if $f\vDash X\Yright A$ for all assignments $f$. We say that $A$ is \emph{classically lericone valid} when $\emptyset\vDash_{\LRCN} A$, and we write $\CLV$ for the set of classically lericone valid formulas. 

\begin{mdef}
    For any set of formulas $X$, say that $X$ is \emph{closed under lericone substitutions} when for all lericone substitutions $\sigma$, if $A\in X$, then $\sigma(\varepsilon,A)\in X$.  
\end{mdef}

\begin{mdef}
    $\lrcnclose{X}=\{A:\sigma(\varepsilon,A)\in X\text{ for all lericone substitutions }\sigma\}$
\end{mdef}

\begin{mdef}
    Let $\sigma$ and $\tau$ be lericone substitutions. Define $(\sigma\star\tau)$ to be the lericone substitution $\langle\overline{x},p\rangle\mapsto\sigma(\overline{x},\tau(\overline{x},p))$. 
\end{mdef}

\begin{lemma}\label{lemma:star}
    $(\sigma\star\tau)(\overline{x},A)=\sigma(\overline{x},\tau(\overline{x},A))$ for all formulas $A$.
\end{lemma}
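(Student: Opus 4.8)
The plan is to prove the identity by a straightforward induction on the structure of the formula $A$, but with the sequence argument \emph{universally quantified}: I would show that for all $\overline{x}\in\LRCN$, $(\sigma\star\tau)(\overline{x},A)=\sigma(\overline{x},\tau(\overline{x},A))$, so that the inductive hypothesis is available at whatever sequence the recursion produces, in particular at $n\overline{x}$, $l\overline{x}$, and $r\overline{x}$. The base case, $A$ an atom $p$, is nothing but the defining clause of $\sigma\star\tau$ on $\LRCN\times\At$.

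For the inductive step I would run through the connectives. The $\land$ and $\lor$ cases are immediate: the extension of $\sigma\star\tau$, the extension of $\tau$, and the extension of $\sigma$ all distribute over $\land$ and $\lor$ without altering the sequence, so the inductive hypothesis applied at $\overline{x}$ to the two immediate subformulas closes these cases. For $\neg$, unwinding the definition gives $(\sigma\star\tau)(\overline{x},\neg B)=\neg(\sigma\star\tau)(n\overline{x},B)$, which by the inductive hypothesis at $n\overline{x}$ equals $\neg\sigma(n\overline{x},\tau(n\overline{x},B))$; on the other side, $\tau(\overline{x},\neg B)=\neg\tau(n\overline{x},B)$, and applying $\sigma$ at $\overline{x}$ to this negation yields the same expression.

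For $\to$ I would split on whether $\overline{x}=\varepsilon$. If $\overline{x}=\varepsilon$, then $(\sigma\star\tau)(\varepsilon,B\to C)=(\sigma\star\tau)(c,B)\to(\sigma\star\tau)(c,C)$, which by the inductive hypothesis at $c$ is $\sigma(c,\tau(c,B))\to\sigma(c,\tau(c,C))$; and on the other side $\tau(\varepsilon,B\to C)=\tau(c,B)\to\tau(c,C)$, so applying $\sigma$ at $\varepsilon$ to this conditional again produces $\sigma(c,\tau(c,B))\to\sigma(c,\tau(c,C))$. If $\overline{x}\neq\varepsilon$, then $(\sigma\star\tau)(\overline{x},B\to C)=(\sigma\star\tau)(l\overline{x},B)\to(\sigma\star\tau)(r\overline{x},C)$, which by the inductive hypothesis at $l\overline{x}$ and $r\overline{x}$ equals $\sigma(l\overline{x},\tau(l\overline{x},B))\to\sigma(r\overline{x},\tau(r\overline{x},C))$; and on the other side $\tau(\overline{x},B\to C)=\tau(l\overline{x},B)\to\tau(r\overline{x},C)$ (since $\overline{x}\neq\varepsilon$), and applying $\sigma$ at the nonempty sequence $\overline{x}$ to this conditional yields the same thing. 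The key point making both sides agree is that the clause for $\sigma$ on a conditional depends only on $\overline{x}$ and the immediate subformulas, not on how those subformulas arose.

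There is no genuine obstacle; the only thing requiring care is the bookkeeping in the conditional case — ensuring that the ``collapse to $c$'' behaviour at $\overline{x}=\varepsilon$ is applied consistently on both sides, and that in the $\overline{x}\neq\varepsilon$ subcase the conditional $\tau(l\overline{x},B)\to\tau(r\overline{x},C)$ is still processed by $\sigma$ under a nonempty sequence. Quantifying $\overline{x}$ in the induction hypothesis, rather than fixing it, is exactly what makes the negation and conditional steps go through cleanly.
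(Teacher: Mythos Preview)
Your proposal is correct and follows essentially the same approach as the paper: an induction on $A$ with $\overline{x}$ universally quantified, where the paper only spells out the $\overline{x}\neq\varepsilon$ conditional case and leaves the rest to the reader. Your write-up is in fact more complete, and your explicit remark about quantifying $\overline{x}$ in the inductive hypothesis is exactly the point that makes the negation and conditional steps go through.
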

\begin{proof}
    By induction on $A$. If $A$ is an atom, the result is immediate from the definition of $(\sigma\star\tau)$. We examine one of the two conditional cases and leave the remainder of the cases to the reader.

    So suppose $\overline{x}\neq\varepsilon$. Then $(\sigma\star\tau)(\overline{x},A\to B)=(\sigma\star\tau)(l\overline{x},A)\to(\sigma\star\tau)(r\overline{x},B)$. By IH, $(\sigma\star\tau)(l\overline{x},A)=\sigma(l\overline{x},\tau(l\overline{x},A))$ and $(\sigma\star\tau)(r\overline{x},B)=\sigma(r\overline{x},\tau(r\overline{x},B))$. Thus we have that
    \begin{align*}
        (\sigma\star\tau)(\overline{x},A\to B)
         & =(\sigma\star\tau)(l\overline{x},A)\to(\sigma\star\tau)(r\overline{x},B) \\
         & = \sigma(l\overline{x},\tau(l\overline{x},A))\to \sigma(r\overline{x},\tau(r\overline{x},B)) \\
         & = \sigma(\overline{x},\tau(l\overline{x},A)\to\tau(r\overline{x},B)) \\
         & = \sigma(\overline{x},\tau(\overline{x},A\to B))
    \end{align*}
\end{proof}

\begin{lemma}\label{lemma:largest}
    $\lrcnclose{X}$ is closed under lericone substitutions and if $Y\subseteq X$ is closed under lericone substitutions, then $Y\subseteq \lrcnclose{X}$. Thus $\lrcnclose{X}$ is the largest subset of $X$ closed under lericone substitutions. 
\end{lemma}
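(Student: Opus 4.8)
The statement has three parts, and I will establish them in a natural order. First I would show that $\lrcnclose{X} \subseteq X$: this is immediate, since if $A \in \lrcnclose{X}$ then $\sigma(\varepsilon,A) \in X$ for \emph{every} lericone substitution $\sigma$, and in particular for the identity substitution $\iota$ defined by $\iota(\overline{x},p) = p$, which by an easy induction on $A$ satisfies $\iota(\varepsilon,A) = A$. So $A = \iota(\varepsilon,A) \in X$.

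Next I would show that $\lrcnclose{X}$ is closed under lericone substitutions. Suppose $A \in \lrcnclose{X}$ and let $\tau$ be an arbitrary lericone substitution; I must show $\tau(\varepsilon,A) \in \lrcnclose{X}$, i.e.\ that $\sigma(\varepsilon,\tau(\varepsilon,A)) \in X$ for every lericone substitution $\sigma$. The key tool here is Lemma~\ref{lemma:star}: by that lemma, $\sigma(\varepsilon,\tau(\varepsilon,A)) = (\sigma\star\tau)(\varepsilon,A)$. Since $\sigma\star\tau$ is itself a lericone substitution and $A \in \lrcnclose{X}$, we get $(\sigma\star\tau)(\varepsilon,A) \in X$, as required. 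I expect this to be the main point where care is needed: one must be sure that the $\star$ operation really does compose substitutions in the way Lemma~\ref{lemma:star} asserts when applied at the root sequence $\varepsilon$ — but that is exactly the content of the lemma, so the obstacle is already cleared by invoking it.

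For the second clause, suppose $Y \subseteq X$ is closed under lericone substitutions; I must show $Y \subseteq \lrcnclose{X}$. Let $A \in Y$ and let $\sigma$ be any lericone substitution. Since $Y$ is closed under lericone substitutions, $\sigma(\varepsilon,A) \in Y \subseteq X$. As $\sigma$ was arbitrary, $A \in \lrcnclose{X}$ by definition. Finally, combining the three facts — $\lrcnclose{X} \subseteq X$, $\lrcnclose{X}$ is closed under lericone substitutions, and every lericone-substitution-closed subset of $X$ is contained in $\lrcnclose{X}$ — gives exactly the assertion that $\lrcnclose{X}$ is the largest subset of $X$ closed under lericone substitutions. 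The whole argument is short; its only non-trivial ingredient is Lemma~\ref{lemma:star}, and the only subtlety beyond that is remembering to verify that the identity lericone substitution acts as the identity on all of $\LL$, which is a routine induction on formula structure.
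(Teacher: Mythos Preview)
Your proposal is correct and follows essentially the same approach as the paper: closure of $\lrcnclose{X}$ under lericone substitutions is obtained via Lemma~\ref{lemma:star} and the $\star$-composition, and maximality is obtained by the direct observation that any lericone-closed $Y\subseteq X$ lands inside $\lrcnclose{X}$ by definition. The only difference is that you explicitly verify $\lrcnclose{X}\subseteq X$ via the identity lericone substitution, which the paper leaves implicit; this is a harmless (indeed helpful) addition rather than a different route.
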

\begin{proof}
    Much as in the proof of Theorem 3 of \citet{Leach-Krouse2024-LEALIT-4}. Explicitly, to show that $\lrcnclose{X}$ is closed under lericone substitutions, let $A\in\lrcnclose{X}$ and $\sigma$ be a lericone substitution. To see that $\sigma(\varepsilon,A)\in\lrcnclose{X}$, we need to show that for all lericone substitutions $\tau$, $\tau(\varepsilon,\sigma(\varepsilon,A))\in\lrcnclose{X}$. But by Lemma~\ref{lemma:star}, $\tau(\varepsilon,\sigma(\varepsilon,A))=\tau\star\sigma(\varepsilon,A)$. And since $\tau\star\sigma$ is a lericone substitution and $A\in\lrcnclose{X}$, it follows that $\tau\star\sigma(\varepsilon,A)\in X$ as required.
    
    Now note that if $Y\subseteq X$ is closed under lericone substitutions and $A\in Y$, then $A\in X$, so $Y\subseteq \lrcnclose{X}$. Thus $\lrcnclose{X}$ contains every subset of $X$ that is closed under lericone substitutions.
\end{proof}

Our first goal is to show that $\CLV=\lrcnclose{\logic{CL}}$. In preparation for that, we will prove some lemmas.

\begin{mdef}
    Let $\sigma$ be a lericone substitution and $f$ be an assignment. We define the assignment $(f\bullet\sigma)$ by $(f\bullet\sigma)(\overline{x},p)=f(\overline{x},\sigma(\overline{x},p))$. 
\end{mdef}

\begin{lemma}\label{Lemma:Bullet}
    For all $A\in\LL$, $(f\bullet\sigma)(\overline{x},A)=f(\overline{x},\sigma(\overline{x},A))$.
\end{lemma}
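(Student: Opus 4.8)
The plan is to prove Lemma~\ref{Lemma:Bullet} by induction on the structure of $A$, in exact parallel with the proof of Lemma~\ref{lemma:star}. The base case is $A$ an atom $p$, where the claim $(f\bullet\sigma)(\overline{x},p)=f(\overline{x},\sigma(\overline{x},p))$ is literally the defining equation of $(f\bullet\sigma)$, so nothing is needed. For the lattice connectives, the key observation is that $\sigma$ and the extension of $f$ both act ``at the same $\overline{x}$'': from $\sigma(\overline{x},A\land B)=\sigma(\overline{x},A)\land\sigma(\overline{x},B)$ and $(f\bullet\sigma)(\overline{x},A\land B)=\inf((f\bullet\sigma)(\overline{x},A),(f\bullet\sigma)(\overline{x},B))$ one applies the inductive hypothesis at $\overline{x}$ to each conjunct and re-collapses via the $\land$-clause for $f$, obtaining $(f\bullet\sigma)(\overline{x},A\land B)=\inf(f(\overline{x},\sigma(\overline{x},A)),f(\overline{x},\sigma(\overline{x},B)))=f(\overline{x},\sigma(\overline{x},A)\land\sigma(\overline{x},B))=f(\overline{x},\sigma(\overline{x},A\land B))$; the $\lor$ case is identical with $\sup$ in place of $\inf$.

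For negation, all three relevant clauses shift the lericone sequence by $\overline{x}\mapsto n\overline{x}$ in lockstep: $\sigma(\overline{x},\neg A)=\neg\sigma(n\overline{x},A)$, $(f\bullet\sigma)(\overline{x},\neg A)=1-(f\bullet\sigma)(n\overline{x},A)$, and $f(\overline{x},\neg C)=1-f(n\overline{x},C)$. Applying the inductive hypothesis at $n\overline{x}$ then gives $(f\bullet\sigma)(\overline{x},\neg A)=1-f(n\overline{x},\sigma(n\overline{x},A))=f(\overline{x},\neg\sigma(n\overline{x},A))=f(\overline{x},\sigma(\overline{x},\neg A))$, as required.

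The conditional case is the only one requiring thought, and it is where the (mild) obstacle sits: one has to confirm that the case split and the sequence-shifts built into the extension of $f$ on a conditional agree with those built into the extension of $\sigma$. I would split exactly as in the proof of Lemma~\ref{lemma:star}. In the ``otherwise'' branch, where $\sigma(\overline{x},A\to B)=\sigma(l\overline{x},A)\to\sigma(r\overline{x},B)$ and $f(\overline{x},C\to D)=\sup(1-f(l\overline{x},C),f(r\overline{x},D))$, we have $(f\bullet\sigma)(\overline{x},A\to B)=\sup(1-(f\bullet\sigma)(l\overline{x},A),(f\bullet\sigma)(r\overline{x},B))$; applying the inductive hypothesis at $l\overline{x}$ to $A$ and at $r\overline{x}$ to $B$ rewrites the right side as $\sup(1-f(l\overline{x},\sigma(l\overline{x},A)),f(r\overline{x},\sigma(r\overline{x},B)))=f(\overline{x},\sigma(l\overline{x},A)\to\sigma(r\overline{x},B))=f(\overline{x},\sigma(\overline{x},A\to B))$. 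The remaining sub-case (the one triggering the first branch of the definitions) is handled the same way, using whatever common shift the two extensions apply there. Since no clause of either extension ever consults $A$ or $B$ except through their values at these shifted sequences, the induction needs no strengthening and closes; the one thing worth double-checking while writing it out in full is precisely that the branching of $f$ and of $\sigma$ on conditionals coincide, after which the argument is a rote structural induction.
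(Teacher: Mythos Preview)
Your proposal is correct and follows exactly the paper's approach: a structural induction on $A$, with the base case given by the definition of $f\bullet\sigma$ and the conditional case as the only one needing the case split. The paper in fact writes out only the $\varepsilon$ branch of the conditional case (where both $\sigma$ and the assignment shift to $c$) and leaves the rest to the reader, whereas you write out the non-$\varepsilon$ branch; your flagged caveat about checking that the two case splits line up is exactly the point the paper is tacitly relying on.
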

\begin{proof}
    By induction on $A$. The base case is immediate from the definition of $(f\bullet\sigma)$. We consider only the $\varepsilon$ part of the conditional case and leave the rest to the reader.

    For that case, we compute as follows:
    \begin{align*}
        (f\bullet\sigma)(\varepsilon,A\to B) & = \sup(1-(f\bullet\sigma)(c,A),(f\bullet\sigma)(c,B)) \\
            & = \sup(1-f(c,\sigma(c,A)),f(c,\sigma(c,B)) \\
            & = f(\varepsilon,\sigma(c,A)\to\sigma(c,B)) \\
            & = f(\varepsilon,\sigma(\varepsilon,A\to B))
    \end{align*}
\end{proof}

\begin{lemma}\label{Lemma:SkeletonsValid}
    $\CLV$ is closed under lericone substitutions: If $A\in\CLV$ and $\sigma(\varepsilon,A)=B$, then $B\in\CLV$.
\end{lemma}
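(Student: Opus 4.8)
The plan is to reduce everything to Lemma~\ref{Lemma:Bullet}, which already does the real work. Recall that $A \in \CLV$ means precisely that $f(\varepsilon, A) = 1$ for \emph{every} assignment $f$, so the claim amounts to showing that $f(\varepsilon, \sigma(\varepsilon, A)) = 1$ for every assignment $f$ and every lericone substitution $\sigma$.

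First I would fix an arbitrary lericone substitution $\sigma$ and an arbitrary assignment $f$, and form the composite assignment $(f \bullet \sigma)$ as in the definition preceding Lemma~\ref{Lemma:Bullet}. The key point is that $(f \bullet \sigma)$ is itself an honest assignment, i.e.\ a function $\LRCN \times \At \to \{0,1\}$, so it is among the assignments over which validity in $\CLV$ quantifies. Hence, since $A \in \CLV$, we get $(f \bullet \sigma)(\varepsilon, A) = 1$.

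Then I would invoke Lemma~\ref{Lemma:Bullet} with $\overline{x} = \varepsilon$ to rewrite $(f \bullet \sigma)(\varepsilon, A) = f(\varepsilon, \sigma(\varepsilon, A))$, so that $f(\varepsilon, \sigma(\varepsilon, A)) = 1$. Since $f$ was an arbitrary assignment, this shows $\sigma(\varepsilon, A) \in \CLV$, which is exactly the statement that $\CLV$ is closed under lericone substitutions (and in particular handles the displayed reformulation: if $A \in \CLV$ and $\sigma(\varepsilon, A) = B$ then $B \in \CLV$).

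There is no real obstacle here: the genuine content has been absorbed into Lemma~\ref{Lemma:Bullet}, and all that remains is the observation that pre-composing an assignment with a lericone substitution yields another assignment, together with an instantiation at $\overline{x} = \varepsilon$. If anything merits a sentence of care, it is just flagging that $(f\bullet\sigma)$ really is an assignment of the required type, so that the universal quantifier in the definition of $\CLV$ applies to it.
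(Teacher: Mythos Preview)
Your proof is correct and is essentially the same as the paper's: both rely on Lemma~\ref{Lemma:Bullet} together with the observation that $(f\bullet\sigma)$ is again an assignment. The only cosmetic difference is that the paper argues by contraposition (from $B\notin\CLV$ to $A\notin\CLV$) whereas you argue directly.
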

\begin{proof}
    We prove the contrapositive. So, suppose $\sigma(\varepsilon,A)=B$ but $B\not\in\CLV$.
    Then there is an assignment $f$ such that $f(\varepsilon, B)=0$. By Lemma~\ref{Lemma:Bullet}, $(f\bullet \sigma)(\varepsilon,A)=f(\varepsilon,\sigma(\varepsilon,A))=f(\varepsilon,B)=0$. Thus $A\not\in\CLV$.
\end{proof}

Lemmas \ref{lemma:largest} and \ref{Lemma:SkeletonsValid} establish that $\CLV\subseteq\lrcnclose{\logic{CL}}$. Let us turn to the converse. A few definitions and lemmas will speed us along.

\begin{mdef}[Skeletons]
    Let $A$ be a formula. $B\in\LL$ is a skeleton of $A\in\LL$ iff for some injective atomic lericone substitution $\iota$, $\iota(\varepsilon, A)=B$. $B$ is a skeleton iff there is some formula $A$ such that  $B$ is a skeleton of $A$.
\end{mdef}
Clearly every formula has many skeletons. 

\begin{mdef}
    Let $\iota$ be an atomic injective lericone substitution. We define the plain substitution $\iota^{-1}$ as follows:
    \begin{displaymath}
        \iota^{-1}(p) =\left\{
            \begin{array}{rl}
                q   & \text{if for some $\overline{y}$, }\iota(\overline{y},q)=p \\
                p   & \text{otherwise}
            \end{array}\right.
    \end{displaymath}
\end{mdef}

\begin{lemma}
    $\iota^{-1}$ is well-defined and for all formulas $A$ and all $\overline{x}\in\lrcn$, $\iota^{-1}(\iota(\overline{x},A))=A$.
\end{lemma}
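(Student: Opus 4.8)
The statement to prove is that $\iota^{-1}$ is well-defined and that $\iota^{-1}(\iota(\overline{x}, A)) = A$ for all formulas $A$ and all $\overline{x} \in \LRCN$.

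\medskip

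\noindent\textbf{Plan.} The proof splits into two parts: well-definedness, then the inversion identity.

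\medskip

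\noindent\textbf{Well-definedness.} The only way $\iota^{-1}$ could fail to be well-defined is if some atom $p$ lies in the range of $\iota$ via two distinct preimage atoms — that is, $\iota(\overline{y}, q) = p = \iota(\overline{z}, q')$ with $q \neq q'$. But this is exactly ruled out by injectivity of $\iota$ as a two-place function: if $\iota(\overline{y}, q) = \iota(\overline{z}, q')$ then $\langle \overline{y}, q\rangle = \langle \overline{z}, q'\rangle$, so in particular $q = q'$. Hence the first clause in the definition of $\iota^{-1}$ picks out a unique atom whenever it applies, and $\iota^{-1}$ is a genuine function $\At \to \At \subseteq \LL$.

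\medskip

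\noindent\textbf{The inversion identity.} I would prove this by induction on the structure of $A$, but stated for all $\overline{x}$ simultaneously, since the lericone sequence changes as we pass through connectives. Recall $\iota^{-1}$ is a \emph{plain} substitution, so it commutes with all connectives by definition.

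For the base case $A = p$ an atom: $\iota(\overline{x}, p)$ is by atomicity some atom $r$, and since $\iota(\overline{x}, p) = r$ witnesses that $r$ is in the range of $\iota$ with preimage atom $p$, well-definedness gives $\iota^{-1}(r) = p$. Hence $\iota^{-1}(\iota(\overline{x}, p)) = p$.

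For the inductive cases, the key point is that applying $\iota$ at sequence $\overline{x}$ to a complex formula pushes the appropriate \emph{modified} sequence to the immediate subformulas, after which $\iota^{-1}$, being plain, distributes back over the connective. Concretely:
\begin{itemize}
    \item For $A = B \ast C$ with $\ast \in \{\land, \lor\}$: $\iota(\overline{x}, B \ast C) = \iota(\overline{x}, B) \ast \iota(\overline{x}, C)$, so $\iota^{-1}(\iota(\overline{x}, B\ast C)) = \iota^{-1}(\iota(\overline{x},B)) \ast \iota^{-1}(\iota(\overline{x},C))$, which equals $B \ast C$ by the induction hypothesis applied to $B$ and $C$ at the sequence $\overline{x}$.
    \item For $A = \neg B$: $\iota(\overline{x}, \neg B) = \neg \iota(n\overline{x}, B)$, and $\iota^{-1}(\neg \iota(n\overline{x}, B)) = \neg \iota^{-1}(\iota(n\overline{x}, B)) = \neg B$ by the induction hypothesis applied to $B$ at the sequence $n\overline{x}$.
    \item For $A = B \to C$: split on whether $\overline{x} = \varepsilon$. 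If $\overline{x} = \varepsilon$, then $\iota(\varepsilon, B \to C) = \iota(c, B) \to \iota(c, C)$, and $\iota^{-1}$ distributes to give $\iota^{-1}(\iota(c,B)) \to \iota^{-1}(\iota(c,C)) = B \to C$ by the induction hypothesis at sequence $c$. If $\overline{x} \neq \varepsilon$, then $\iota(\overline{x}, B \to C) = \iota(l\overline{x}, B) \to \iota(r\overline{x}, C)$, and similarly $\iota^{-1}$ distributes, yielding $B \to C$ by the induction hypothesis at sequences $l\overline{x}$ and $r\overline{x}$.
\end{itemize}
This exhausts the cases, completing the induction.

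\medskip

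\noindent\textbf{Main obstacle.} There is no serious obstacle here; the only subtlety worth flagging is bookkeeping — the induction must be formulated "for all $\overline{x}$" rather than at a fixed $\overline{x}$, because the connective cases feed the subformulas modified sequences ($n\overline{x}$, $l\overline{x}$, $r\overline{x}$, or $c$), and one must be careful that the \emph{inverse} substitution is plain (so it does not itself track sequences) while the \emph{forward} substitution $\iota$ does. Once this asymmetry is kept straight, every case is a one-line computation. I would present well-definedness as a short paragraph and the identity as a routine structural induction, explicitly noting the base case is where injectivity (via well-definedness) does the real work and the inductive steps are pure distribution of a plain substitution over connectives.
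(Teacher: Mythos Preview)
Your proposal is correct and follows exactly the paper's approach: well-definedness via injectivity of $\iota$, then a structural induction on $A$ (which the paper merely gestures at as ``a straightforward induction on $A$ suffices''). You have simply written out in full what the paper leaves to the reader.
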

\begin{proof}
    Suppose that $\iota(\overline{y},q)=\iota(\overline{z},r)$. Then since $\iota$ is injective, $q=r$. Thus $\iota^{-1}$ is well-defined.

    To see that $\iota^{-1}(\iota(\overline{x},A))=A$, a straightforward induction on $A$ suffices.
\end{proof}

\begin{lemma}\label{lemma:any_skeleton}
    Let $X$ be a set of formulas. Then, for every $A\in\LL$, and every skeleton $B$ of $A$, $A\in\lrcnclose{X}$ iff $B\in\lrcnclose{X}$.
\end{lemma}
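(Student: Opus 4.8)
The plan is to prove both directions by exploiting the closure of $\lrcnclose{X}$ under lericone substitutions (Lemma~\ref{lemma:largest}), the fact that plain substitutions are a special case of lericone substitutions, and the inversion lemma $\iota^{-1}(\iota(\overline{x},A))=A$ just established. Fix $A\in\LL$ and a skeleton $B$ of $A$, so that $B=\iota(\varepsilon,A)$ for some atomic injective lericone substitution $\iota$.

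First I would do the left-to-right direction. Suppose $A\in\lrcnclose{X}$. Since $\iota$ is itself a lericone substitution and $\lrcnclose{X}$ is closed under lericone substitutions by Lemma~\ref{lemma:largest}, we get $\iota(\varepsilon,A)\in\lrcnclose{X}$, i.e.\ $B\in\lrcnclose{X}$. This direction is essentially immediate.

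For the right-to-left direction, suppose $B\in\lrcnclose{X}$. I want to recover $A$ by applying the inverse plain substitution $\iota^{-1}$. Viewing $\iota^{-1}$ as a (plain, hence lericone) substitution, closure of $\lrcnclose{X}$ under lericone substitutions gives $\iota^{-1}(\varepsilon,B)\in\lrcnclose{X}$ — here I am using that a plain substitution $\iota^{-1}$, regarded as a lericone substitution, satisfies $\iota^{-1}(\varepsilon,B)=\iota^{-1}(B)$ in the ordinary sense, since plain substitutions act the same under every lericone sequence and their extension to $\LL$ agrees with the plain extension. By the preceding lemma, $\iota^{-1}(B)=\iota^{-1}(\iota(\varepsilon,A))=A$, so $A\in\lrcnclose{X}$, as required.

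The only real subtlety — and the step I would be most careful about — is the bookkeeping identifying the lericone-extension of the plain substitution $\iota^{-1}$ with its plain extension, so that the inversion lemma (stated for $\iota^{-1}$ as a plain substitution) can legitimately be fed the output of the closure property (which delivers $\iota^{-1}(\varepsilon,B)$ in the lericone sense). This is routine given the remark immediately after the definition of lericone substitutions that plain substitutions are exactly the lericone substitutions constant in their first argument, but it is worth spelling out rather than glossing.
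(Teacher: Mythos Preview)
Your proposal is correct and follows essentially the same route as the paper's own proof: the left-to-right direction is immediate from closure under lericone substitutions, and the right-to-left direction applies the plain substitution $\iota^{-1}$ (which, as a lericone substitution, still preserves membership in $\lrcnclose{X}$) and then invokes the inversion lemma $\iota^{-1}(\iota(\varepsilon,A))=A$. Your explicit attention to identifying the plain and lericone extensions of $\iota^{-1}$ is a bit more careful than the paper's presentation, but the argument is the same.
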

\begin{proof}
    The `only if' direction is immediate. For the other direction, suppose $B\in \lrcnclose{X}$, for each skeleton $B$ of $A$. Pick one such $B$. By definition, there is an atomic injective $\iota$ such that $\iota(\varepsilon,A)=B$. 

    Since $\lrcnclose{X}$ is closed under lericone substitutions, it's closed under plain substitutions as well. Thus $\iota^{-1}(\iota(\varepsilon,A))=A\in\lrcnclose{X}$ as required.
\end{proof}

\begin{lemma}\label{Lemma:SubstitutionsContainedInValid}
$\lrcnclose{\logic{CL}}$ is contained in $\CLV$. 
\end{lemma}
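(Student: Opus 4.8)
The goal is to show $\lrcnclose{\logic{CL}}\subseteq\CLV$. The plan is to argue contrapositively: given $A\notin\CLV$, I produce a lericone substitution $\sigma$ such that $\sigma(\varepsilon,A)\notin\logic{CL}$, which witnesses $A\notin\lrcnclose{\logic{CL}}$. Since $A\notin\CLV$, there is an assignment $f$ with $f(\varepsilon,A)=0$. The idea is to convert the falsifying assignment $f$ into a plain classical countermodel of some substitution instance of $A$, by using an atomic injective substitution to separate the distinct lericone-occurrences of atoms in $A$ and then reading off a classical valuation from $f$.

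The key steps, in order: (1) Fix an atomic injective lericone substitution $\iota$ (the G\"odel substitution $g$ will do), and let $B=\iota(\varepsilon,A)$ be the corresponding skeleton of $A$. By Lemma~\ref{lemma:any_skeleton} it suffices to show $B\notin\lrcnclose{\logic{CL}}$, so it suffices to find a plain substitution (indeed the identity will work here) under which $B$ is not classically valid — i.e. it suffices to show $B\notin\logic{CL}$. (2) Define a classical valuation $v$ on atoms by setting, for each atom $q$ in the range of $\iota$, say $q=\iota(\overline{x},p)$, the value $v(q)=f(\overline{x},p)$; on atoms not in the range of $\iota$, set $v$ arbitrarily. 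This is well-defined precisely because $\iota$ is atomic injective, so the pair $(\overline{x},p)$ is recovered uniquely from $q$. (3) Prove by induction on $A$ that the classical value of $\iota(\overline{x},A)$ under $v$ equals $f(\overline{x},A)$; equivalently, that $v$ composed with $\iota$ "is" $f$. Concretely one shows $v(\iota(\overline{x},A)) = f(\overline{x},A)$ for all $A$ and all $\overline{x}\in\LRCN$. The base case is the definition of $v$; conjunction and disjunction are immediate since both $v$ and $f$ use $\inf/\sup$; negation uses that $\iota(\overline{x},\neg A)=\neg\iota(n\overline{x},A)$ and that both $v$ and $f$ send $\neg$ to $1-(\cdot)$, matching the shift from $\overline{x}$ to $n\overline{x}$; and the conditional case uses that $\iota(\overline{x},A\to B)$ unfolds with the $l\overline{x}/r\overline{x}$ (or $c$, when $\overline x=\varepsilon$) relabeling on the two sides, exactly mirroring the two clauses in the definition of $f$ on conditionals. (Note the harmless cosmetic discrepancy that the assignment clause writes $c\overline{x}$ in the $c$-free subcase while the substitution clause writes $c$ in the $\overline x=\varepsilon$ subcase; since a $c$-free sequence only reaches the conditional clause with terminal relabeling that eventually bottoms out at $\varepsilon$, the two bookkeeping conventions agree on the values that matter. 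I would either align the induction with whichever convention is in force or remark on this.) (4) Apply the $A$-level identity at $\overline{x}=\varepsilon$: $v(B)=v(\iota(\varepsilon,A))=f(\varepsilon,A)=0$, so $B$ is classically falsifiable, hence $B\notin\logic{CL}$, hence $B\notin\lrcnclose{\logic{CL}}$, hence $A\notin\lrcnclose{\logic{CL}}$ by Lemma~\ref{lemma:any_skeleton}.

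The main obstacle I expect is step (3): getting the bookkeeping exactly right between the lericone-sequence transformations built into the extension of $\iota$ and those built into the extension of $f$, especially reconciling the $\varepsilon$-versus-$c$-free treatment of conditionals in the definition of an assignment. This is the only place where a careless induction could go wrong; everything else is routine. A secondary, purely expository obstacle is deciding whether to phrase the argument directly in terms of $\iota$ and $v$ as above, or to package it through a "$(f\bullet\sigma)$"-style gadget as was done for Lemma~\ref{Lemma:Bullet}; the direct route seems cleanest since we want to land in classical logic rather than back in $\CLV$. Finally, one should double-check that the two-place injectivity of $\iota$ is genuinely used — it is, precisely to make $v$ well-defined on $\mathrm{range}(\iota)$ — which is the lericone analogue of the role injectivity plays in Leach-Krouse's original argument.
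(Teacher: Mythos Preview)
Your proposal is correct and follows essentially the same approach as the paper: both arguments reduce to a skeleton via Lemma~\ref{lemma:any_skeleton}, then convert the falsifying lericone-sensitive assignment into a classical valuation using the fact that in a skeleton each atom has a unique lericone position, and verify the conversion by an induction on formula structure. The only cosmetic difference is order of operations: the paper first replaces $B$ by a skeleton (WLOG) and then takes a falsifying assignment for the skeleton, whereas you take a falsifying assignment for $A$ first and transport it through $\iota$; these are interchangeable. Your flagging of the $\varepsilon$-versus-$c$-free bookkeeping mismatch between the substitution and assignment clauses is apt and is exactly the point where the induction needs care.
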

\begin{proof}
Suppose $B\not\in\CLV$. By Lemma~\ref{lemma:any_skeleton}, we may without loss of generality assume that $B$ is a skeleton.
Since $B\not\in\CLV$, there is an assignment $f$ such that $f(\varepsilon,B)=0$.
Since $B$ is a skeleton, we can define a \logic{CL}-assignment $g$ as follows. 
\[
g(p)=\left\{
        \begin{array}{rl}
            f(\overline{x},p) & \text{ if } p \text{ occurs in $B$ and $\overline{x}=\lrcn(B[p])$} \\
            1 & \text{ otherwise}.
        \end{array}\right.
\]
By an induction on $B$ we can show that $f(\varepsilon, B)=g(B)=0$.
By definition, $\lrcnclose{\logic{CL}}\subseteq \logic{CL}$, so $g(B)=0$ implies $B\not\in\lrcnclose{\logic{CL}}$.

\end{proof}

Thus, we've now established\
\begin{theorem}
    $\lrcnclose{\logic{CL}}=\CLV$
\end{theorem}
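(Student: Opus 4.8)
The plan is to establish the two inclusions $\CLV\subseteq\lrcnclose{\logic{CL}}$ and $\lrcnclose{\logic{CL}}\subseteq\CLV$ separately; both are by now essentially bookkeeping over the lemmas already proved.

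For the forward inclusion I would first record the (unstated but needed) fact that $\CLV\subseteq\logic{CL}$. Any classical valuation $v\colon\At\to\{0,1\}$ induces the ``constant'' lericone-sensitive assignment $f$ with $f(\overline{x},p)=v(p)$ for every $\overline{x}\in\LRCN$, and a one-line induction on formula structure shows $f(\overline{x},A)=v(A)$ for all $A$ and all $\overline{x}$ — the negation and conditional clauses just reproduce the classical clauses once the lericone prefixes are ignored. In particular $f(\varepsilon,A)=v(A)$, so if $A\in\CLV$ then $v(A)=1$ for every classical $v$, i.e.\ $A\in\logic{CL}$. Now Lemma~\ref{Lemma:SkeletonsValid} says $\CLV$ is closed under lericone substitutions, and Lemma~\ref{lemma:largest} says $\lrcnclose{\logic{CL}}$ is the \emph{largest} subset of $\logic{CL}$ closed under lericone substitutions; hence $\CLV\subseteq\lrcnclose{\logic{CL}}$.

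The reverse inclusion $\lrcnclose{\logic{CL}}\subseteq\CLV$ is exactly Lemma~\ref{Lemma:SubstitutionsContainedInValid}, so no further argument is required: one takes the contrapositive, uses Lemma~\ref{lemma:any_skeleton} to reduce without loss of generality to a skeleton $B$, extracts a lericone countermodel $f$ with $f(\varepsilon,B)=0$, and then — since every atom occurs exactly once in a skeleton — reads off a genuine classical countermodel $g$ by $g(p)=f(\lrcn(B[p]),p)$ for atoms occurring in $B$. Combining the two inclusions gives $\lrcnclose{\logic{CL}}=\CLV$.

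There is no remaining obstacle at this level; all the content sits in the preceding lemmas. If I had to flag the conceptually delicate point it is the collapse inside Lemma~\ref{Lemma:SubstitutionsContainedInValid}: a lericone countermodel can be pushed down to an honest classical countermodel precisely because skeletons have no repeated atoms, so the definition of $g$ never has to adjudicate between two occurrences of one atom sitting under different lericone sequences — and the passage to skeletons (Lemma~\ref{lemma:any_skeleton}), licensed by the invertibility of atomic injective substitutions, is what makes that reduction available for arbitrary formulas.
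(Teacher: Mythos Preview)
Your proof is correct and follows essentially the same route as the paper: the paper also obtains $\CLV\subseteq\lrcnclose{\logic{CL}}$ from Lemmas~\ref{lemma:largest} and~\ref{Lemma:SkeletonsValid} and the reverse inclusion from Lemma~\ref{Lemma:SubstitutionsContainedInValid}. You are in fact slightly more careful than the paper in making explicit the needed fact $\CLV\subseteq\logic{CL}$, which the paper leaves tacit.
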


    \subsection{$\CLV$ is A Relevant Logic}\label{sec:CLV_is_relevant}
    
    $\CLV$ is stronger than the logic \logic{BM}. 
The formula  $(p\to q)\lor(q\to r)$ is contained in the former but not in the latter. While this formula is not  attractive from a relevant-logical point of view, it is surprising that some of the paradoxes of implication survive even the stringent standards of lericone-sensitive assignments. 

    On the other hand, given that it does in fact contain some of the paradoxes of material implication, and seems to do so in virtue of its being quite close in spirit to classical logic, it comes as quite a surprise to find (as we are about to show) that $\CLV$ nonetheless enjoys the variable sharing property. To prove this, we will need a  definition and a lemma. First, the definition of polarity of an $\LRN$-sequence.

\begin{mdef}[Polarity]
    The \emph{polarity} of $\overline{x}\in\LRN$ is defined as follows. 
    \begin{itemize}
        \item The polarity of $\varepsilon$ is positive. 
        \item If the polarity of $\overline{y}$ is positive(negative), then the polarity of $n\overline{y}$ is negative(positive). 
        \item If the polarity of $\overline{y}$ is positive(negative), then the polarity of $l\overline{y}$ is negative(positive). 
        \item If the polarity of $\overline{y}$ is positive(negative), 
then the polarity of $r\overline{y}$ is positive(negative). 
    \end{itemize}
\end{mdef}

This definition of polarity matches the usual definition, when one is not considering $c$.\footnote{Logan~\citeyearpar{logan2022} provides an example of the usual definition, in a non-lericone context.}
We will only be concerned with polarity in contexts where $c$ will not arise.

\begin{mdef}
    Suppose $A$ and $C$ share no atoms. We define the following functions:
    \begin{align*}
    f^+_{A}(\overline{y},p) & =\left\{
        \begin{array}{rl} 
            1 & \text{if $\overline{y}=\overline{x}c$, $\overline{x}$ is positive, and $\lrcn((A\to C)[p])=\overline{x}c$} \\
            0 & \text{otherwise}
        \end{array}\right. \\
        f^-_{A}(\overline{y},p)& =\left\{
        \begin{array}{rl} 
            0 & \text{if $\overline{y}=\overline{x}c$, $\overline{x}$ is positive, and $\lrcn((C\to A)[p])=\overline{x}c$} \\
            1 & \text{otherwise}
        \end{array}\right. \\
    \end{align*}
\end{mdef}

Note that aside from the initial caveat, the choice of $C$ in the above definition is unimportant, and that it works just as well to define $f^-_{A}=1-f^+_{A}$. But the intuitive connection between the lemma here and the theorem below is better if we put the definition this way. Finally, note that since $A$ and $C$ don't share any atoms, no subformula of $A$ is a subformula of $C$.

\begin{lemma}
    If $B$ is a subformula of $A$, then $f^+_A(\overline{x}c,B)=1$ if $\overline{x}$ is positive and $\lrcn((A\to C)[B])=\overline{x}c$ and $f^+_A(\overline{x}c,B)=0$ if $\overline{x}$ is negative and $\lrcn((A\to C)[B])=\overline{x}c$.
\end{lemma}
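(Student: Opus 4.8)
The plan is to prove the lemma by induction on the structure of the subformula $B$ of $A$, where the relevant inductive hypothesis is exactly the biconditional-style statement being proven (i.e., for every subformula, the value of $f^+_A$ at $(\overline{x}c, B)$ is $1$ or $0$ according to whether the polarity of $\overline{x}$ is positive or negative, whenever $\lrcn((A\to C)[B])=\overline{x}c$). The key observation that makes the induction go through is Example~\ref{t_example}: if $\lrcn((A\to C)[B])=\overline{x}c$, then $\lrcn((A\to C)[B'])=t(\overline{x}')$-style sequences for the immediate subformulas $B'$ of $B$ are obtained by prepending $l$, $r$, or $n$ to $\overline{x}$ (since $B$ sits strictly inside the antecedent $A$, the sequence $\overline{x}$ is nonempty and $c$-terminated, so no $c$-transform subtleties intervene). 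The polarity clauses in the definition of polarity are then precisely calibrated so that the extension clauses for $f^+_A$ match up.

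First I would handle the base case: $B$ an atom $p$. Here $f^+_A(\overline{x}c,p)=1$ iff $\overline{x}$ is positive and $\lrcn((A\to C)[p])=\overline{x}c$, which is exactly the definition of $f^+_A$, so the base case is immediate. Next, the conjunction and disjunction cases: if $B=B_1\land B_2$ with $\lrcn((A\to C)[B])=\overline{x}c$, then both immediate subformula occurrences carry the same sequence $\overline{x}c$ (the $\land/\lor$ clause of the lericone parsing tree does not alter the sequence), so by IH $f^+_A(\overline{x}c,B_1)$ and $f^+_A(\overline{x}c,B_2)$ are both $1$ (if $\overline{x}$ positive) or both $0$ (if $\overline{x}$ negative); since $f^+_A(\overline{x}c,B_1\land B_2)=\inf(\cdots)$ is computed via the assignment-extension clauses — wait, $f^+_A$ is an assignment, so $f^+_A(\overline{x}c,B_1\land B_2)=\inf(f^+_A(\overline{x}c,B_1),f^+_A(\overline{x}c,B_2))$ and $\sup$ for disjunction — in either case the common value $1$ or $0$ is returned. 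For negation $B=\neg B_1$ with $\lrcn((A\to C)[\neg B_1])=\overline{x}c$: the occurrence of $B_1$ has sequence $n\overline{x}c$, and $f^+_A(\overline{x}c,\neg B_1)=1-f^+_A(n\overline{x}c,B_1)$; by IH this flips $1\leftrightarrow 0$, and since the polarity of $n\overline{x}$ is the opposite of that of $\overline{x}$, the result lands correctly.

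The conditional case $B=B_1\to B_2$ is the main obstacle, and the subtlety is entirely in the sequence bookkeeping. Since $B$ is a subformula of $A$ and $A$ occupies the antecedent of $A\to C$, the sequence $\overline{x}$ associated to $B$ within $A\to C$ is nonempty, hence the parsing-tree clause for conditionals that applies is the $\overline{x}\neq\varepsilon$ one, giving $B_1$ the sequence $l\overline{x}c$ and $B_2$ the sequence $r\overline{x}c$ — here I must be careful that the sequence attached to the occurrence of $B_1\to B_2$ is $\overline{x}c$ (with a terminal $c$ because $B$ sits inside the top-level antecedent), and then the children get $l(\overline{x}c)=l\overline{x}c$ and $r(\overline{x}c)=r\overline{x}c$. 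Then $f^+_A(\overline{x}c,B_1\to B_2)=\sup(1-f^+_A(l\overline{x}c,B_1),f^+_A(r\overline{x}c,B_2))$ (using the $c$-free-prefix clause of the assignment extension appropriately — actually since $\overline{x}c$ is not $c$-free, the "otherwise" clause applies, giving $\sup(1-f^+_A(l\overline{x}c,B_1),f^+_A(r\overline{x}c,B_2))$). Now if $\overline{x}$ is positive: $l\overline{x}$ is negative so by IH $f^+_A(l\overline{x}c,B_1)=0$, making the first disjunct $1$, so the $\sup$ is $1$ — correct. If $\overline{x}$ is negative: $l\overline{x}$ is positive so $f^+_A(l\overline{x}c,B_1)=1$, first disjunct $0$; and $r\overline{x}$ is negative so $f^+_A(r\overline{x}c,B_2)=0$, so the $\sup$ is $0$ — correct. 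The one thing I would double-check in writing this up is the boundary interaction between "$B$ is a subformula of $A$" and the possibility $B=A$ itself, where the governing occurrence-sequence is $\lrcn((A\to C)[A])=c$, i.e.\ $\overline{x}=\varepsilon$; in that edge case the $\varepsilon$-clause of the assignment extension for conditionals fires instead, but $\varepsilon$ is positive and the argument that the antecedent-side disjunct forces the value to $1$ still goes through since $\lrcn(A[B_1])=t(l)=c$ corresponds to a positive-parity-neutral situation — I would verify this carefully, but it does not disrupt the pattern.
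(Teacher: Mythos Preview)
Your proof is correct and matches the paper's own argument essentially step for step: induction on $B$, with the atomic case immediate from the definition of $f^{+}_{A}$, the $\land/\lor$ cases using that the lericone sequence is unchanged, the $\neg$ case using the polarity flip for $n\overline{x}$, and the $\to$ case using that $l\overline{x}$ flips polarity while $r\overline{x}$ preserves it. Your closing worry about the $B=A$ boundary is unnecessary: even when $\overline{x}=\varepsilon$, the sequence fed to the assignment is $\overline{x}c=c$, which is not $c$-free, so the same ``otherwise'' clause you already invoked applies uniformly and no special $\varepsilon$-case arises.
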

\begin{proof}
    By induction on $B$. If $B=p$ is an atom, the result is immediate from the definition of $f^+_A$. 

    For conjunctions note that $f^+_A(\overline{x}c,B_1\land B_2)=\inf(f^+_A(\overline{x}c,B_1),f^+_A(\overline{x}c,B_2))$ and if $\lrcn((A\to C)[B_1\land B_2])=\overline{x}c$, then $\lrcn((A\to C)[B_1])=\overline{x}c$ and $\lrcn((A\to C)[B_2])=\overline{x}c$. So if $\overline{x}$ is positive, then by IH $f^+_A(\overline{x}c,B_i)=1$ and thus $f^+_A(\overline{x}c,B_1\land B_2)=1$. On the other hand, if $\overline{x}$ is negative, then by IH $f^+_A(\overline{x}c,B_i)=0$ and thus $f^+_A(\overline{x}c,B_1\land B_2)=0$. Mutatis mutandis the same arguments work for disjunctions.

    For negations, note that $f^+_A(\overline{x}c,\neg B')=1-f^+_A(n\overline{x}c,B')$. If $\lrcn((A\to C)[\neg B'])=\overline{x}c$, then $\lrcn((A\to C)[B'])=n\overline{x}c$. So if $\overline{x}$ is positive, then by IH $f^+_A(n\overline{x}c,B')=0$ and thus $f^+_A(\overline{x}c,\neg B')=1$. On the other hand, if $\overline{x}$ is negative, then by IH $f^+_A(n\overline{x}c,B')=1$ and thus $f^+_A(\overline{x}c,\neg B')=0$. 

    Finally, for conditionals note that $f^+_A(\overline{x}c,B_1\to B_2)=\sup(1-f^+_A(l\overline{x}c,B_1),f^+_A(r\overline{x}c,B_2))$. If $\lrcn((A\to C)[B_1\to B_2])=\overline{x}c$, then $\lrcn((A\to C)[B_1])=l\overline{x}c$ and $\lrcn((A\to C)[B_2])=r\overline{x}c$. So if $\overline{x}$ is positive, then by IH $f^+_A(l\overline{x}c,B_1)=0$ and thus $f^+_A(\overline{x}c,B_1\to B_2)=1$. On the other hand, if $\overline{x}$ is negative, then by IH $f^+_A(l\overline{x}c,B_1)=1$ and $f^+_A(r\overline{x}c,B_2)=0$ and thus $f^+_A(\overline{x}c,B_1\to B_2)=0$. 
\end{proof}

\begin{lemma}
    If $B$ is a subformula of $A$, then $f^-_A(\overline{x}c,B)=0$ if $\overline{x}$ is positive and $\lrcn((A\to C)[B])=\overline{x}c$ and $f^-_A(\overline{x}c,B)=1$ if $\overline{x}$ is negative and $\lrcn((A\to C)[B])=\overline{x}c$.
\end{lemma}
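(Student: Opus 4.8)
The plan is to prove this lemma by induction on $B$, running exactly parallel to the proof of the preceding lemma but with the roles of $0$ and $1$ interchanged throughout; together with that lemma, this is the ingredient used to build the countermodel witnessing variable sharing for $\CLV$. Before starting the induction I would record the harmless bookkeeping fact that for any subformula $B$ of $A$ one has $\lrcn((A\to C)[B])=\lrcn((C\to A)[B])$: in both $A\to C$ and $C\to A$ the formula $A$ occurs as an immediate child of a root-level conditional, so its root sits at position $c$ in either case, and the internal structure of $A$ then determines the remainder of the sequence identically. This is what lets the statement of the lemma (phrased with $A\to C$) line up with the defining clause of $f^-_A$ (phrased with $C\to A$), and it makes the base case immediate: if $B=p$ is an atom with $\lrcn((A\to C)[p])=\overline{x}c$, then by definition $f^-_A(\overline{x}c,p)=0$ exactly when $\overline{x}$ is positive and $=1$ otherwise.

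For the inductive step I would handle the four connective cases by pushing the trailing $c$ through the recursion clauses for assignments, reading polarities off the tree rules. For $B=B_1\land B_2$ (resp. $B_1\lor B_2$), since $\lrcn((A\to C)[B_i])=\overline{x}c$ as well, the IH gives that both immediate subformulas evaluate to $0$ when $\overline{x}$ is positive and to $1$ when $\overline{x}$ is negative, and $\inf$ (resp. $\sup$) preserves this. For $B=\neg B'$ one has $f^-_A(\overline{x}c,\neg B')=1-f^-_A(n\overline{x}c,B')$ with $\lrcn((A\to C)[B'])=n\overline{x}c$; since $n$ flips polarity, the IH gives $f^-_A(n\overline{x}c,B')=1$ (resp. $0$) when $\overline{x}$ is positive (resp. negative), and complementing yields $0$ (resp. $1$). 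For $B=B_1\to B_2$, the sequence $\overline{x}c$ is never $c$-free, so the applicable clause is $f^-_A(\overline{x}c,B_1\to B_2)=\sup(1-f^-_A(l\overline{x}c,B_1),f^-_A(r\overline{x}c,B_2))$, with $\lrcn((A\to C)[B_1])=l\overline{x}c$ and $\lrcn((A\to C)[B_2])=r\overline{x}c$; using that $l$ flips polarity while $r$ preserves it, the IH turns this into $\sup(1-1,0)=0$ when $\overline{x}$ is positive and $\sup(1-0,1)=1$ when $\overline{x}$ is negative.

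I do not expect a genuine obstacle here; the only things to watch are the polarity conventions ($n$ and $l$ flip, $r$ preserves) and the observation that a sequence ending in $c$ selects the ``otherwise'' branch of the conditional clause, not the $c$-free branch. One might hope to skip the induction by invoking the remark that $f^-_A$ and $1-f^+_A$ coincide on atoms, but that shortcut does not survive the recursion: the extension of an assignment to formulas does not commute with complementation (already $1-\inf$ is not $\inf$ of complements), and the conditional clause has no De Morgan dual available in the language, so the direct induction above is the cleanest route.
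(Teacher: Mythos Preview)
Your proposal is correct and follows essentially the same induction on $B$ as the paper's own proof. If anything, you are slightly more careful: you make explicit the bookkeeping identity $\lrcn((A\to C)[B])=\lrcn((C\to A)[B])$ needed to reconcile the lemma's phrasing with the defining clause of $f^-_A$, and you note correctly that the evaluation at $\overline{x}c$ always selects the non-$c$-free branch of the conditional clause---points the paper leaves implicit.
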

\begin{proof}
    By induction on $B$. If $B=p$ is an atom, the result is immediate from the definition of $f^-_A$. 

    For conjunctions note that $f^-_A(\overline{x}c,B_1\land B_2)=\inf(f^-_A(\overline{x}c,B_1),f^-_A(\overline{x}c,B_2))$ and if $\lrcn((A\to C)[B_1\land B_2])=\overline{x}c$, then $\lrcn((A\to C)[B_1])=\overline{x}c$ and $\lrcn((A\to C)[B_2])=\overline{x}c$. So if $\overline{x}$ is positive, then by IH $f^-_A(\overline{x}c,B_i)=0$ and thus $f^-_A(\overline{x}c,B_1\land B_2)=1$. On the other hand, if $\overline{x}$ is negative, then by IH $f^-_A(\overline{x}c,B_i)=1$ and thus $f^-_A(\overline{x}c,B_1\land B_2)=1$. Mutatis mutandis the same arguments work for disjunctions.

    For negations, note that $f^-_A(\overline{x}c,\neg B')=1-f^-_A(n\overline{x}c,B')$. If $\lrcn((A\to C)[\neg B'])=\overline{x}c$, then $\lrcn((A\to C)[B'])=n\overline{x}c$. So if $\overline{x}$ is positive, then by IH $f^-_A(n\overline{x}c,B')=0$ and thus $f^-_A(\overline{x}c,\neg B')=1$. On the other hand, if $\overline{x}$ is negative, then by IH $f^-_A(n\overline{x}c,B')=1$ and thus $f^-_A(\overline{x}c,\neg B')=0$. 

    Finally, for conditionals note that $f^-_A(\overline{x}c,B_1\to B_2)=\sup(1-f^-_A(l\overline{x}c,B_1),f^-_A(r\overline{x}c,B_2))$. If $\lrcn((A\to C)[B_1\to B_2])=\overline{x}c$, then $\lrcn((A\to C)[B_1])=l\overline{x}c$ and $\lrcn((A\to C)[B_2])=r\overline{x}c$. So if $\overline{x}$ is positive, then by IH $f^-_A(l\overline{x}c,B_1)=0$ and thus $f^-_A(\overline{x}c,B_1\to B_2)=1$. On the other hand, if $\overline{x}$ is negative, then by IH $f^-_A(l\overline{x}c,B_1)=1$ and $f^-_A(r\overline{x}c,B_2)=0$ and thus $f^-_A(\overline{x}c,B_1\to B_2)=0$. 
\end{proof}

\begin{lemma}\label{Lemma:h-definition}
    Suppose that $A$ and $B$ share no atoms. Define the assignment $h$ by
    \[
        h(\overline{x},p)=\left\{
        \begin{array}{rl}
            f^+_A(\overline{x}, p) & \text{ if } p \text{ occurs in $A$ and $\overline{x}=\lrcn((A\to B)[p])$} \\
            f^-_B(\overline{x}, p) & \text{ if } p \text{ occurs in $B$ and $\overline{x}=\lrcn((A\to B)[p])$} \\
            1 & \text{ otherwise}.
        \end{array}\right.
    \]
    Then if $C$ is a subformula of $A$ then $h(\lrcn((A\to B)[C]),C)=f^+_A(\lrcn((A\to B)[C]),C)$ and if $C$ is a subformula of $B$ then $h(\lrcn((A\to B)[C]),C)=f^-_B(\lrcn((A\to B)[C]),C)$
\end{lemma}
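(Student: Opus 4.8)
My plan is to prove both clauses of the conclusion simultaneously by induction on the structure of $C$. By the symmetry between $f^+_A$ (reading the auxiliary formula in its definition as $B$) and $f^-_B$ (reading that auxiliary formula as $A$), it suffices to treat the case in which $C$ is a subformula of $A$; the case in which $C$ is a subformula of $B$ is verbatim the same, with $f^+_A$ replaced throughout by $f^-_B$. Two preliminary observations keep things honest. First, since $A$ and $B$ share no atoms and every formula of $\LL$ contains an atom, no subformula of $A$ is a subformula of $B$, so the two clauses of the conclusion never apply to the same $C$; and, since the first non-default clause of the definition of $h$ requires its atom to occur in $A$ while the second requires it to occur in $B$, those two clauses never both fire at a single argument. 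Second, an occurrence of a subformula $C$ of $A$ inside $A\to B$ lies below the ``$c$'' that the parsing tree introduces at the root conditional and below no further ``$c$'', so $\lrcn((A\to B)[C])$ is always of the form $\overline{w}c$ with $\overline{w}\in\LRN$; in particular it is never $c$-free, so at every sequence arising in the induction the conditional clause of the assignment extension takes its ``otherwise'' branch, and the bare $\varepsilon$ is never reached.

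For the base case, $C=p$ is an atom occurring in $A$, hence not in $B$, so with $\overline{x}=\lrcn((A\to B)[p])$ the first clause of the definition of $h$ gives $h(\overline{x},p)=f^+_A(\overline{x},p)$ outright. For the inductive step, I would use the transformation facts already recorded inside the proofs of the two lemmas immediately above: writing $\overline{x}=\lrcn((A\to B)[C])$, we have $\lrcn((A\to B)[C_i])=\overline{x}$ when $C=C_1\ast C_2$ with $\ast\in\{\land,\lor\}$; $\lrcn((A\to B)[C'])=n\overline{x}$ when $C=\neg C'$; and $\lrcn((A\to B)[C_1])=l\overline{x}$, $\lrcn((A\to B)[C_2])=r\overline{x}$ when $C=C_1\to C_2$, the last because $\overline{x}$ ends in $c$. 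Since $h$ and $f^+_A$ are both assignments, they are extended from atoms by exactly the same clauses, so in each case $h(\overline{x},C)$ and $f^+_A(\overline{x},C)$ arise by applying one and the same operation ($\inf$, $\sup$, $1-(\cdot)$, or the ``otherwise'' $\sup(1-(\cdot),(\cdot))$) to the values of $h$ and of $f^+_A$ at the immediate subformulas paired with the sequences displayed above; those immediate subformulas are again subformulas of $A$, so the induction hypothesis equates the paired values, and hence $h(\overline{x},C)=f^+_A(\overline{x},C)$. This closes the induction.

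I do not expect a real obstacle here. Morally the lemma just says that $h$, although defined only by reading $\lrcn$-labels off atom occurrences, automatically coincides with $f^+_A$ (resp.\ $f^-_B$) at every subformula occurrence once it is handed the matching label, and this propagates up the recursion because $h$, $f^+_A$, and $f^-_B$ are all extended by identical rules. The only points that genuinely need care are the two isolated at the outset---that the non-default clauses of $h$ cannot conflict, and that every sequence arising in the induction ends in $c$ so that the conditional clause of the assignment extension behaves uniformly---together with the routine check that the sequences the assignment-extension clauses feed to immediate subformulas are literally the values $\lrcn((A\to B)[\cdot])$ assigns there. All of this is bookkeeping with definitions already in place and computations already performed in the two preceding proofs.
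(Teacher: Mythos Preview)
Your proposal is correct and follows essentially the same route as the paper: an induction on $C$ that reduces each case to the induction hypothesis via the standard $\lrcn$-transformation facts and the common assignment-extension clauses. Your two preliminary observations---that the non-default clauses of $h$ cannot collide and that every sequence arising ends in $c$ so the conditional clause always takes its ``otherwise'' branch---are not spelled out in the paper's proof but are exactly the bookkeeping that makes the computations there go through, so if anything your write-up is slightly more explicit on points the paper leaves tacit.
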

\begin{proof}
    By induction on $C$, separately for each conclusion. For atoms the result is immediate from the definition of $h$. We sample a selection of the remaining clauses and leave the rest to the reader.

    Suppose $C=D_1\land D_2$ is a subformula of $A$, and say $\lrcn((A\to B)[C])=\overline{x}c$. As $\lrcn((A\to B)[C])=\lrcn((A\to B)[D_i])$,
    it follows that
    \[
    h(\overline{x}c,C)=\min(h(\overline{x}c,D_1), h(\overline{x}c,D_2)).
    \]
    
     By IH, 
    \[
    \min(h(\overline{x}c,D_1), h(\overline{x}c,D_2))=\min(f^+_A(\overline{x}c,D_1),f^+_A(\overline{x}c,D_2))=f^+_A(\overline{x}c,C),
    \]
    which was to be proved.

    Suppose $C=D_1\lor D_2$ is a subformula of $B$, and say $\lrcn((A\to B)[C])=\overline{x}c$. Then, as $\lrcn((A\to B)[C])=\lrcn((A\to B)[D_i])$,
    \[
    h(\overline{x}c,C)=\max(h(\overline{x}c,D_1), h(\overline{x}c,D_2)). 
    \]
    
    By IH,
    \[
    \max(h(\overline{x}c,D_1), h(\overline{x}c,D_2))=\max(f^-_B(\overline{x}c,D_1),f^-_B(\overline{x}c,D_2))=f^-_B(\overline{x}c,C),
    \]
    which completes the case.

    Suppose $C=\neg D$ is a subformula of $A$, and say $\lrcn((A\to B)[C])=\overline{x}c$. As $n\overline{x}c=\lrcn((A\to B)[D])$, it follows that
    \begin{center}
    \begin{tabular}{ccc}
    $h(\overline{x}c,C)$ & = &  $1-h(n\overline{x}c,D)$\\ 
    & =& $1-f^+_A(n\overline{x}c,D)$ \\ 
    & =& $f^+_A(\overline{x}c,C)$ \\ 
    \end{tabular}
    \end{center}
    The transition from the first to the second line is justified by IH, and the remainder are justified by the definition of assignments. 
    
    Suppose $C=D_1\to D_2$ is a subformula of $B$, and say $\lrcn((A\to B)[C])=\overline{x}c$. 
    As $l\overline{x}c=\lrcn((A\to B)[D_1])$ and $r\overline{x}c=\lrcn((A\to B)[D_2])$, it follows that 
    \[
    h(\overline{x}c,C)=\max(1-h(l\overline{x}c,D_1), h(r\overline{x}c,D_2)).
    \]
    Therefore, 
    \[
    \max(1-h(l\overline{x}c,D_1), h(r\overline{x}c,D_2))=\max(1-h(\lrcn((A\to B)[D_1]),D_1), h(\lrcn((A\to B)[D_2]),D_2)).
    \]
    By IH, this implies that the right-hand side is identical to
    \[
    \max(1-f^-_B(\lrcn((A\to B)[D_1]),D_1), f^-_B(\lrcn((A\to B)[D_2]),D_2)), 
    \]
    
    which in turn is identical to $f^-_B(\lrcn((A\to B)[C]),C)$,
    as desired.

\end{proof}

With this lemma in hand, we can prove that $\CLV$ enjoys variable sharing. 
\begin{theorem}\label{LRCN_Vs}
    $\CLV$ enjoys the variable sharing property. 
\end{theorem}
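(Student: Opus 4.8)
The plan is to argue by contraposition: assuming that $A$ and $B$ share no propositional atom, I will exhibit a single assignment forcing $\emptyset\not\vDash_{\LRCN} A\to B$. The natural candidate is the assignment $h$ constructed in Lemma~\ref{Lemma:h-definition}, which on the antecedent side copies $f^+_A$ and on the consequent side copies $f^-_B$; the hypothesis that $A$ and $B$ are atom-disjoint is precisely what guarantees that the first two clauses in the definition of $h$ never compete, so that $h$ is a well-defined assignment. Since $\varepsilon$ is $c$-free, the clause for $\to$ gives
\[
h(\varepsilon,A\to B)=\sup\bigl(1-h(c,A),\,h(c,B)\bigr),
\]
so it is enough to check that $h(c,A)=1$ and $h(c,B)=0$.

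Both checks are immediate applications of Lemma~\ref{Lemma:h-definition} at the root, together with the two lemmas preceding it that describe the behaviour of $f^+_A$ and $f^-_B$ on subformulas. The whole formula $A$ is a subformula of $A$, occurring in $A\to B$ as the antecedent, so $\lrcn((A\to B)[A])=c$; reading $c$ as $\overline{x}c$ with $\overline{x}=\varepsilon\in\LRN$ positive, Lemma~\ref{Lemma:h-definition} gives $h(c,A)=f^+_A(c,A)$, and the lemma on $f^+_A$ (positive case) gives $f^+_A(c,A)=1$. Symmetrically, $B$ is a subformula of $B$ occurring in $A\to B$ as the consequent, $\lrcn((A\to B)[B])=c=\varepsilon c$ with $\varepsilon$ positive, so Lemma~\ref{Lemma:h-definition} gives $h(c,B)=f^-_B(c,B)$ and the companion lemma on $f^-_B$ (positive case) gives $f^-_B(c,B)=0$. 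Substituting into the display above yields $h(\varepsilon,A\to B)=\sup(0,0)=0$, so $A\to B\notin\CLV$; hence every implication $A\to B$ in $\CLV$ must have $A$ and $B$ sharing an atom.

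There is essentially no remaining obstacle at this stage: the work has all been front-loaded into the two parallel inductions showing that $f^+_A$ (resp.\ $f^-_B$) reads off the polarity of the lericone sequence uniformly across the subformulas of $A$ (resp.\ $B$), and into Lemma~\ref{Lemma:h-definition}, which stitches these together while verifying that the recursive clauses defining an assignment are respected once $A$ and $B$ cannot interfere. The only points demanding care in the write-up are bookkeeping: that the sequence $c$ is of the form $\overline{x}c$ with $\overline{x}=\varepsilon$, so the auxiliary lemmas apply; that $\varepsilon$ counts as positive, so that we land in the ``$=1$'' and ``$=0$'' branches rather than their opposites; and that atom-disjointness of $A$ and $B$ is exactly the hypothesis that makes $h$ well-defined in the first place.
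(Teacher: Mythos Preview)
Your proposal is correct and follows essentially the same route as the paper's own proof: contrapose, invoke the assignment $h$ of Lemma~\ref{Lemma:h-definition} (whose well-definedness is exactly where the atom-disjointness hypothesis is used), apply that lemma together with the two preceding lemmas on $f^+_A$ and $f^-_B$ at the root occurrences to get $h(c,A)=1$ and $h(c,B)=0$, and conclude $h(\varepsilon,A\to B)=0$. The only difference is that you spell out the bookkeeping (that $c=\varepsilon c$ with $\varepsilon$ positive, and which branch of each auxiliary lemma fires) that the paper leaves implicit.
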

\begin{proof}
    Suppose that $A$ and $B$ share no atoms and define $h$ as above. By the preceding lemmas, $h(c,A)=1$ and $h(c,B)=0$. It follows that $h(\varepsilon, A\to B)=0$, and thus $A\to B\not\in\CLV$. Contraposing, if $A\to B\in\CLV$, then $A$ and $B$ share an atom.
\end{proof}
Inspection of the proof reveals that we can strengthen the result to the following new form of variable sharing. 
\begin{cor}
If $A\to B\in \CLV$, then $A$ and $B$ share an atom with the same lericone sequence in $A\to B$.    
\end{cor}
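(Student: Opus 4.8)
The plan is to observe that the proof of Theorem~\ref{LRCN_Vs} already establishes the sharper claim, with only cosmetic changes. I argue by contraposition. Suppose $A$ and $B$ share no atom occurring with the same lericone sequence in $A\to B$; that is, suppose there is no $p\in\At$ and $\overline{x}\in\LRN$ admitting occurrences $A[p]$ and $B[p]$ with $\lrcn(A[p]\to B)=\lrcn(A\to B[p])=\overline{x}c$. The goal is to build an assignment $h$ with $h(\varepsilon,A\to B)=0$, and I take $h$ to be exactly the assignment of Lemma~\ref{Lemma:h-definition}, built from $f^+_A$ and $f^-_B$, and re-run that argument.

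The single point where the original argument uses the full disjointness of $A$ and $B$ is in verifying that the three-clause definition of $h$ is a genuine function. The first clause fires at $\langle\overline{x},p\rangle$ precisely when $p$ has an occurrence in $A$ with $\lrcn(A[p]\to B)=\overline{x}$, and the second fires precisely when $p$ has an occurrence in $B$ with $\lrcn(A\to B[p])=\overline{x}$; so the two clauses can collide only at a $\langle\overline{x},p\rangle$ that witnesses a shared atom carrying a common lericone sequence. Our weakened hypothesis forbids exactly such witnesses, so $h$ remains well-defined, and this is the one thing that genuinely needs checking. (That $p$ might occur in $A$ at one sequence and in $B$ at another causes no trouble, since $h$ is defined sequence-by-sequence; this was already true in the original.)

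Everything downstream then transfers. The two lemmas computing $f^+_A$ and $f^-_B$ on subformulas, and Lemma~\ref{Lemma:h-definition} itself, are proved by induction on subformula structure and never actually invoke disjointness of atoms: the disjointness hypothesis in those lemmas is present only so that the lericone-sequence notation there designates a unique occurrence, and rewriting that notation throughout with occurrences made explicit (as in $\lrcn(A[\,\cdot\,]\to C)$ and $\lrcn(C\to A[\,\cdot\,])$) removes even that reliance. Hence, just as in the proof of Theorem~\ref{LRCN_Vs}, $h(c,A)=f^+_A(c,A)=1$ and $h(c,B)=f^-_B(c,B)=0$, so $h(\varepsilon,A\to B)=\sup(1-h(c,A),h(c,B))=0$ and therefore $A\to B\notin\CLV$. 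Contraposing yields the corollary.

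There is no real obstacle here; the only thing requiring care is the recognition that the disjointness assumption in Theorem~\ref{LRCN_Vs} and its supporting lemmas was purchased solely for well-definedness of $h$ and for notational convenience — the weaker assumption covers the former, and making occurrences explicit covers the latter. It is worth confirming once in passing that every lericone sequence attached to a subformula-occurrence of the antecedent or consequent of $A\to B$ has the form $\overline{x}c$ with $\overline{x}\in\LRN$, so that the polarity definition and the ``$\overline{x}c$'' bookkeeping in the $f^+_A$ and $f^-_B$ lemmas apply without alteration.
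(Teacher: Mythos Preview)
Your argument is correct, but it takes a genuinely different route from the paper's. The paper does not reopen the proof of Theorem~\ref{LRCN_Vs} at all: instead it applies an atomic injective lericone substitution to $A\to B$ to obtain a skeleton $A'\to B'$, observes that injectivity forces $A'$ and $B'$ to share no atoms whatsoever (any shared atom would decode to a common atom at a common lericone sequence in $A\to B$), invokes Theorem~\ref{LRCN_Vs} as a black box to get $A'\to B'\notin\CLV$, and then uses closure of $\CLV$ under lericone substitutions to pull this back to $A\to B\notin\CLV$. Your approach instead revisits the construction of $h$ and argues that the disjointness hypothesis in Lemma~\ref{Lemma:h-definition} and its supporting lemmas was only ever needed to make $h$ well-defined and to keep the occurrence notation unambiguous; under the weaker hypothesis the same $h$ still works. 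Both are valid. The paper's route is shorter and exhibits the leverage of the skeleton technique already set up in Lemma~\ref{lemma:any_skeleton}; your route is more self-contained and makes transparent that Theorem~\ref{LRCN_Vs} was, in effect, already proving the stronger statement, at the cost of having to audit the internals of three lemmas rather than citing them.
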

\begin{proof}
Suppose that $A$ and $B$ do not share an atom with the same lericone sequence. Let $A'\to B'$ be a skeleton that results from applying an atomic injective substitution to $A\to B$. It follows that $A'$ and $B'$ do not share any atoms. By the previous theorem, $A'\to B'\not\in\CLV$. As $\CLV$ is closed under lericone substitutions, it follows that $A\to B\not\in \CLV$. Therefore, by contraposing, if $A\to B\in \CLV$, $A$ and $B$ share an atom with the same lericone sequence in $A\to B$.    

\end{proof}
Since $\CLV$ enjoys variable sharing, it is a relevant logic. 
We note that, since  $\CLV$ contains $(p\to q)\lor(q\to r)$, it is incomparable with the best known relevant logic, Anderson and Belnap's logic \logic{R}, which contains violations of lericone substitution closure, such as $(p\to(p\to q))\to(p\to q)$.
In fact, it is incomparable with the logic \logic{RMingle}, obtained from \logic{R} by the addition of $A\to(A\to A)$, due to the same disjunction.
As a consequence, $\CLV$ is also incomparable with $\logic{TMingle}$, which enjoys the variable sharing property, unlike \logic{RMingle}.\footnote{The invalidity of the displayed disjunction in \logic{RMingle} can be shown using John Slaney's program $\tt{MaGIC}$, which is available at {\tt {http://users.cecs.anu.edu.au/~jks/magic.html}}. We leave this as an exercise for the interested reader.} 
Closure under lericone substitution identifies a different subfamily of the relevant logics from the usual suspects. 

A noteworthy observation to make at this point is that in light of Theorem~\ref{LRCN_Vs}, the proofs of Corollary~\ref{BM_var_shar} and Corollary~\ref{B_var_shar} need no longer be parasitic on variable sharing results for $\mathbf{R}$---we need only observe that since the logics in question are lericone-invariant sublogics of classical logic, they are ipso facto sublogics of $\CLV$. This fact is sufficient to carry, in the proofs of the above corollaries, the weight formerly carried by known variable sharing results for $\mathbf{R}$.

    \subsection{$\vDash_{\LRCN}$ is compact}

We end the `nice features of $\CLV$ and related systems' theme by showing that $\vDash_{\LRCN}$ is compact. 
To begin, we establish some notation. 
For a lericone substitution $\sigma$, say that $\sigma(\overline{x},X)=\set{\sigma(\overline{x},B):B\in X}$. 
Where $\sigma$ is a lericone substitution, for a set of formulas $X$, let $X^\sigma=\sigma(\varepsilon,X)$, and for a formula $A$, $A^\sigma=\sigma(\varepsilon,A)$. 
\begin{lemma}
Suppose $\iota$ is an injective atomic lericone substitution, that $X=Y^\iota$, for some set of formulas $Y$, and that $A=B^\iota$, for some formula $B$.
If $X\vDash_{\logic{CL}}A$, then $X\vDash_{\LRCN} A$. 
\end{lemma}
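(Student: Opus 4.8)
The plan is to prove the contrapositive: assuming $X \not\vDash_{\LRCN} A$, produce a classical countermodel witnessing $X \not\vDash_{\logic{CL}} A$. So suppose there is an assignment $f$ with $f(\varepsilon, B') = 1$ for every $B' \in X$ but $f(\varepsilon, A) = 0$. We have $X = Y^\iota$ and $A = B^\iota$, so every formula in $X \cup \{A\}$ is in the range of the lericone substitution $\iota$ applied at $\varepsilon$; crucially, $\iota$ is atomic injective, so each occurrence of each atom in $B$ (and in the formulas of $Y$) gets sent to a \emph{distinct} atom, with the target atom determined by the pair $\langle \lrcn, \text{source atom}\rangle$. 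The idea is that on such ``skeleton'' formulas a lericone-sensitive assignment behaves like an ordinary classical assignment, because the lericone argument $\overline{x}$ that $f$ is handed at an atom-occurrence is always exactly $\lrcn$ of that occurrence, and distinct occurrences of the same source atom live under distinct lericones, hence get mapped to distinct target atoms — so there is no ``interference'' and a single classical valuation on the target atoms suffices.

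Concretely, first I would note that each of $A$ and the members of $X$ is a skeleton, so I can invoke (the reasoning behind) Lemma~\ref{Lemma:SubstitutionsContainedInValid}: for a skeleton $D$ one defines a \logic{CL}-assignment $g_D$ by $g_D(p) = f(\overline{x}, p)$ when $p$ occurs in $D$ with $\overline{x} = \lrcn(D[p])$, and $g_D(p) = 1$ otherwise, and then shows $g_D(D) = f(\varepsilon, D)$ by induction on $D$. The subtlety here, compared to that lemma, is that I need a \emph{single} classical assignment $g$ that works simultaneously for all of $X$ and for $A$, not a separate one per formula. This is where atomic injectivity of $\iota$ does the work: if $p$ is an atom occurring in one of these skeletons, it occurs there as $\iota(\overline{x}, q)$ for a \emph{unique} pair $\langle \overline{x}, q\rangle$ (uniqueness of $q$ is injectivity; and within a fixed skeleton the occurrence determines $\overline{x}$, while across skeletons any occurrence of $p = \iota(\overline{x},q)$ carries that same $\overline{x}$). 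So setting $g(p) = f(\overline{x}, p)$ whenever $p = \iota(\overline{x}, q)$ for some $\overline{x}, q$, and $g(p) = 1$ otherwise, gives a well-defined classical assignment.

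Then I would verify, by induction on the source formula, that for every subformula $C$ of $B$ (and similarly for subformulas of members of $Y$), $g(\iota(\lrcn(\text{that occurrence of }C), C)) = f(\lrcn(\text{that occurrence}), \iota(\ldots, C))$ — i.e. that $g$ evaluates the $\iota$-image of $C$ exactly as $f$ does at the corresponding lericone. The atomic base case is the definition of $g$; the $\land, \lor$ cases are immediate since all three subformula-occurrences carry the same lericone; the $\neg$ case uses that $f(\overline{x}, \neg D) = 1 - f(n\overline{x}, D)$ and that $n\overline{x}$ is the lericone of the inner occurrence; the $\to$ case uses $f(\overline{x}, D_1 \to D_2) = \sup(1 - f(l\overline{x}, D_1), f(r\overline{x}, D_2))$ (or the $c$-variant at $\varepsilon$) and the corresponding lericone bookkeeping. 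Applying this at $\varepsilon$ to $B$ and to each member of $Y$ gives $g(A) = f(\varepsilon, A) = 0$ and $g(B') = f(\varepsilon, B') = 1$ for all $B' \in X$, so $X \not\vDash_{\logic{CL}} A$, which is the contrapositive of what we want.

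The main obstacle is precisely the well-definedness / consistency of the single classical assignment $g$ across all the formulas of $X$ simultaneously (an infinite set, in general): one must be sure that no atom $p$ in the common target vocabulary is forced to take two different values by two different formulas of $X$, and this rests entirely on the fact that $\iota$ is \emph{injective as a two-place function}, so that the preimage pair $\langle \overline{x}, q\rangle$ — and hence the value $f(\overline{x}, p)$ assigned to $p$ — is unique regardless of which skeleton we read $p$ off of. Once that is pinned down, the induction is the same routine computation as in Lemma~\ref{Lemma:SubstitutionsContainedInValid}, just carried along the $\iota$-image rather than over a single skeleton.
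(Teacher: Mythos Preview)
Your proposal is correct and follows essentially the same route as the paper's proof: both argue the contrapositive, use injectivity of $\iota$ to show that each atom $p$ appearing in $X\cup\{A\}$ carries a unique lericone $\overline{x}$ (so that $g(p)=f(\overline{x},p)$ is well-defined), and then verify by a routine structural induction that $g$ agrees with $f(\varepsilon,\cdot)$ on every formula of $X\cup\{A\}$. The only cosmetic differences are that you phrase the induction over source-side subformulas via $\iota$ whereas the paper inducts directly over subformulas of the skeletons, and you define $g$ on the full range of $\iota$ rather than only on atoms actually occurring in $X\cup\{A\}$; neither affects the argument.
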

\begin{proof}
Suppose $X\not\vDash_{\LRCN} A$. So, there is a $\LRCN$-assignment $f$ such that $f(\varepsilon, B)=1$, for each $B\in X$, and $f(\varepsilon,A)=0$. 
We want to construct a \logic{CL}-assignment $g$ ``matching'' $f$. 
First, note that for every atom $p$ in $X\cup\set{A}$, for all formulas $B_1,B_2\in X\cup\set{A}$, $\lrcn(B_1[q_1])=\lrcn(B_2[q_2])$, where $q_1$ and $q_2$ are occurrences of $p$. 
Suppose that $\lrcn(B_1[q_1])\neq\lrcn(B_2[q_2])$, and say that $\overline{x}=\lrcn(B_1[q_1])$ and $\overline{y}=\lrcn(B_2[q_2])$. 
Then for some $r,s\in\At$, $\iota(\overline{x},r)=p$ and $\iota(\overline{y},s)=p$, which contradicts the injectivity of $\iota$. 
Therefore, each occurrence of $p$ in $X\cup\set{A}$ has the same $\lrcn$-sequence, which we will denote by $\lrcn(p)$. 

Define the $\logic{CL}$-assignment $g$ by defining 
\[
g(p)=\left\{
        \begin{array}{rl}
            f(\lrcn(p), p) & \text{ if } p \text{ occurs in $X\cup\set{A}$}  \\
            1 & \text{ otherwise}.
        \end{array}\right.
\]
The function $g$ is well-defined because for each $p\in \At$, $\lrcn(p)$ is well-defined.
We claim that for $B\in X\cup\set{A}$, $g(B)=f(\varepsilon, B)$. 
By an induction on structure, for each $B\in X\cup\set{A}$, for each subformula $C$ of $B$, $g(C)=f(\lrcn(B[C]), C)$. 

It follows that $g(B)=1$, for all $B\in X$, and $g(A)=0$. Therefore, $X\not\vDash_{\logic{CL}}A$, as desired.
\end{proof}
Next, we note a lemma whose proof is straightforward. 
\begin{lemma}
    Suppose that $\iota$ is an injective atomic lericone substitution, that $X=Y^\iota$, for some $Y\subseteq \LL$, and that $A=B^\iota$, for some $B\in\LL$.
If $X\vDash_{\LRCN}A$, then $X\vDash_{\logic{CL}} A$. 
\end{lemma}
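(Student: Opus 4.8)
The plan is to argue contrapositively, exploiting the fact that an $\LRCN$-assignment which is constant in its first coordinate is nothing but a classical valuation in disguise. So suppose $X\not\vDash_{\logic{CL}}A$ and fix a classical valuation $g\colon\At\to\{0,1\}$ with $g(C)=1$ for all $C\in X$ and $g(A)=0$. Regard $g$ as the $\LRCN$-assignment $\widehat g$ given by $\widehat g(\overline{x},p)=g(p)$ for every $\overline{x}\in\LRCN$ and every $p\in\At$; that is, $\widehat g$ simply discards the lericone sequence it is passed. I note in passing that the hypotheses $X=Y^\iota$ and $A=B^\iota$ play no role in this direction --- they are stated only to mirror the preceding lemma --- so really what gets proved is that $X\vDash_{\LRCN}A$ entails $X\vDash_{\logic{CL}}A$ for arbitrary $X$ and $A$.

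The only thing to verify is that this first-coordinate-blindness is inherited by all formulas: for every $C\in\LL$ and every $\overline{x}\in\LRCN$, $\widehat g(\overline{x},C)=g(C)$. This is a routine induction on $C$, of exactly the shape of the computation in the proof of Lemma~\ref{Lemma:Bullet}. The base case is the definition of $\widehat g$. For $C=C_1\land C_2$ or $C=C_1\lor C_2$, the defining clause for $\widehat g(\overline{x},C)$ takes the $\inf$, resp.\ $\sup$, of $\widehat g(\overline{x},C_1)$ and $\widehat g(\overline{x},C_2)$ --- the very same sequence $\overline{x}$ --- which by the inductive hypothesis equal $g(C_1)$ and $g(C_2)$, and $\inf$ and $\sup$ are precisely the classical clauses. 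For $C=\neg C'$ we get $\widehat g(\overline{x},\neg C')=1-\widehat g(n\overline{x},C')=1-g(C')=g(\neg C')$. For $C=C_1\to C_2$, whichever branch of the definition applies, $\widehat g(\overline{x},C_1\to C_2)$ is computed as $\sup\bigl(1-\widehat g(\overline{y},C_1),\widehat g(\overline{z},C_2)\bigr)$ for suitable sequences $\overline{y},\overline{z}$ built from $\overline{x}$; by the inductive hypothesis the two inner values are $g(C_1)$ and $g(C_2)$, so the whole is $g(C_1\to C_2)$. This closes the induction.

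Given the claim, we are done: $\widehat g(\varepsilon,C)=g(C)=1$ for every $C\in X$ while $\widehat g(\varepsilon,A)=g(A)=0$, so $\widehat g\not\vDash X\Yright A$ and hence $X\not\vDash_{\LRCN}A$; contraposing yields the lemma. I expect no genuine obstacle here --- the entire content is the one-line induction above --- and the lemma is ``straightforward'' precisely because the $\LRCN$ semantics refines the classical one by \emph{splitting} an atom's value across lericone sequences, never by altering the Boolean operation applied at each connective. Combined with the previous lemma and the compactness of classical consequence, this is what delivers compactness of $\vDash_{\LRCN}$: transport a given $\LRCN$-consequence across an atomic injective substitution to a classical one, extract a finite subset there, and transport back.
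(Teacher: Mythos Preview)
Your proof is correct, and since the paper leaves this lemma to the reader, there is nothing to compare against beyond the intended spirit; your contrapositive argument via the first-coordinate-constant assignment $\widehat g$ is exactly the natural route and is presumably what the authors had in mind. Your observation that the hypotheses $X=Y^\iota$ and $A=B^\iota$ are idle in this direction is also correct: the implication $X\vDash_{\LRCN}A\Rightarrow X\vDash_{\logic{CL}}A$ holds unconditionally, because the classical semantics is recovered from the lericone semantics simply by collapsing the $\LRCN$ coordinate.
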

\begin{proof}
The proof is left to the reader.
\end{proof}
Putting the preceding lemmas together we have the following corollary. 
\begin{cor}\label{Cor:LRCNandCLequiv}
Suppose $\iota$ is an injective atomic lericone substitution, that $X=Y^\iota$, for some set of formulas $Y$, and that $A=B^\iota$, for some formula $B$.
Then,  $X\vDash_{\LRCN}A$ iff $X\vDash_{\logic{CL}} A$. 
\end{cor}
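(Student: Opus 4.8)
The plan is to chain together the two lemmas immediately preceding the corollary. Each of those lemmas is stated under exactly the hypotheses now in force --- that $\iota$ is an injective atomic lericone substitution, that $X = Y^\iota$ for some set of formulas $Y$, and that $A = B^\iota$ for some formula $B$ --- so no repositioning of witnesses or re-derivation is required. First I would fix $\iota$, $Y$, $B$ and set $X = Y^\iota$ and $A = B^\iota$. For the left-to-right implication I would assume $X \vDash_{\LRCN} A$ and apply the second of the two preceding lemmas to conclude $X \vDash_{\logic{CL}} A$. For the converse I would assume $X \vDash_{\logic{CL}} A$ and apply the first of the two preceding lemmas to conclude $X \vDash_{\LRCN} A$. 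Together these give $X \vDash_{\LRCN} A$ iff $X \vDash_{\logic{CL}} A$, which is the corollary.

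There is no real obstacle here, since all the substantive content has already been discharged in the two lemmas; the corollary is a bookkeeping step. The only point I would double-check is that the two existential clauses in the corollary (``for some set of formulas $Y$'' and ``for some formula $B$'') coincide with those in the lemma hypotheses, which they manifestly do, so no fresh choices of witnesses are needed. For completeness one could also spell out the lemma that is left to the reader: given a \logic{CL}-countermodel $g$ to $X \vDash_{\logic{CL}} A$, the \LRCN-assignment $f$ that is constant in its first coordinate, with $f(\overline{x}, p) = g(p)$ for every $\overline{x}$ and $p$, satisfies $f(\overline{x}, C) = g(C)$ for every formula $C$ and every $\overline{x}$ by a routine induction on $C$ (the clauses for $\to$, $\neg$, $\land$, $\lor$ all reduce to the corresponding classical clauses precisely because $f$ ignores its first coordinate), so $f$ witnesses $X \not\vDash_{\LRCN} A$; note that this direction does not even use $\iota$, which is why it is ``straightforward''.
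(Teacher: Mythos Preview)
Your proposal is correct and matches the paper's own treatment: the corollary is stated immediately after the two lemmas with the remark that it follows by ``putting the preceding lemmas together,'' which is exactly the chaining you describe. Your added sketch of the lemma left to the reader---lifting a classical countermodel $g$ to the constant-in-first-coordinate $\LRCN$-assignment $f(\overline{x},p)=g(p)$---is also correct and indeed does not require the hypothesis on $\iota$.
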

We will note another straightforward lemma. 
\begin{lemma}\label{Lemma:LRCNCOnClosure}
The following are equivalent, for all $X\subseteq\LL$ and all  $A\in\LL$. 
\begin{itemize}
\item $X\vDash_{\LRCN} A$.
\item $X^\iota\vDash_{\LRCN} A^\iota$, for all atomic injective $\iota$. 
\end{itemize}
\end{lemma}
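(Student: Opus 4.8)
The plan is to reduce Lemma~\ref{Lemma:LRCNCOnClosure} to Lemma~\ref{Lemma:Bullet} together with the invertibility of atomic injective substitutions. The first thing I would do is upgrade Lemma~\ref{Lemma:SkeletonsValid} from theoremhood to consequence, in the form of the following sub-claim: for \emph{every} lericone substitution $\sigma$, if $X\vDash_{\LRCN}A$ then $X^\sigma\vDash_{\LRCN}A^\sigma$. To prove this, fix an arbitrary assignment $f$ and consider the assignment $f\bullet\sigma$. Applying $X\vDash_{\LRCN}A$ to $f\bullet\sigma$ gives that either $(f\bullet\sigma)(\varepsilon,B)=0$ for some $B\in X$ or $(f\bullet\sigma)(\varepsilon,A)=1$. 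By Lemma~\ref{Lemma:Bullet}, $(f\bullet\sigma)(\varepsilon,C)=f(\varepsilon,\sigma(\varepsilon,C))=f(\varepsilon,C^\sigma)$ for every formula $C$, so what we have obtained is precisely $f\vDash X^\sigma\Yright A^\sigma$. Since $f$ was arbitrary, $X^\sigma\vDash_{\LRCN}A^\sigma$.

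Given the sub-claim, the top-to-bottom direction of the lemma is immediate: take $\sigma$ to be each atomic injective $\iota$ in turn. For the bottom-to-top direction I would fix one atomic injective lericone substitution $\iota$ (any will do; the G\"odel substitution $g$ constructed earlier is a concrete choice), so that by hypothesis $X^\iota\vDash_{\LRCN}A^\iota$. Now apply the sub-claim with the lericone substitution $\iota^{-1}$ — recalling that plain substitutions are lericone substitutions — to obtain $(X^\iota)^{\iota^{-1}}\vDash_{\LRCN}(A^\iota)^{\iota^{-1}}$. Finally, since $\iota^{-1}$ is plain, its action as a lericone substitution agrees with the ordinary plain-substitution extension on formulas, so $(C^\iota)^{\iota^{-1}}=\iota^{-1}(\iota(\varepsilon,C))=C$ by the lemma stating $\iota^{-1}(\iota(\overline{x},A))=A$ (with $\overline{x}=\varepsilon$). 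Hence $(X^\iota)^{\iota^{-1}}=X$ and $(A^\iota)^{\iota^{-1}}=A$, which yields $X\vDash_{\LRCN}A$, completing the equivalence.

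There is no substantial obstacle here; the real content has already been isolated in Lemma~\ref{Lemma:Bullet} and in the invertibility of atomic injective substitutions. The only points that deserve a moment's care are (i) noting that it is the validity of the \emph{consequence} relation $\vDash_{\LRCN}$, and not merely theoremhood, that is preserved under arbitrary lericone substitution — the uniform $f\bullet\sigma$ argument above handles this — and (ii) observing that for the plain substitution $\iota^{-1}$ the lericone extension coincides with the ordinary one, so that the identity $\iota^{-1}(\iota(\overline{x},C))=C$ may be invoked verbatim to collapse $(\cdot{}^\iota)^{\iota^{-1}}$ back to the identity on $X\cup\{A\}$.
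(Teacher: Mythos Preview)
Your proof is correct, and since the paper simply leaves this proof to the reader, there is nothing to compare against beyond the tools the paper has set up. Your argument is exactly the one those tools invite: the sub-claim generalizing Lemma~\ref{Lemma:SkeletonsValid} to consequence via $f\bullet\sigma$ and Lemma~\ref{Lemma:Bullet}, followed by an application of $\iota^{-1}$ and the identity $\iota^{-1}(\iota(\varepsilon,C))=C$, is precisely what the surrounding lemmas were prepared for.
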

\begin{proof}
The proof is left to the reader.
\end{proof}
Putting these pieces together, we get compactness for $\vDash_{\LRCN}$. 
\begin{theorem}
For $X\subseteq\LL$ and $A\in\LL$, if $X\vDash_{\LRCN}A$, then for some finite $Y\subseteq X$, $Y\vDash_{\LRCN}A$.
\end{theorem}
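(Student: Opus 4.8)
The plan is to reduce compactness of $\vDash_{\LRCN}$ to ordinary compactness of classical propositional logic, using the bridge already built in Corollary~\ref{Cor:LRCNandCLequiv} and Lemma~\ref{Lemma:LRCNCOnClosure}: applying a single atomic injective lericone substitution $\iota$ turns a $\vDash_{\LRCN}$ question into a $\vDash_{\logic{CL}}$ question about the image. So I would fix one atomic injective $\iota$ (the G\"odel substitution $g$ will do) and suppose $X\vDash_{\LRCN}A$. By the forward direction of Lemma~\ref{Lemma:LRCNCOnClosure}, $X^\iota\vDash_{\LRCN}A^\iota$, and then, since $X^\iota$ and $A^\iota$ are $\iota$-images of $X$ and $A$, Corollary~\ref{Cor:LRCNandCLequiv} gives $X^\iota\vDash_{\logic{CL}}A^\iota$. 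Now invoke compactness of $\logic{CL}$: there is a finite $Z\subseteq X^\iota$ with $Z\vDash_{\logic{CL}}A^\iota$.

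Next I would transport $Z$ back across $\iota$. Because $\iota$ acts injectively on formulas --- this is exactly what the identity $\iota^{-1}(\iota(\varepsilon,B))=B$ records --- the set $Y=\{\iota^{-1}(C):C\in Z\}$ is a finite subset of $X$ with $Y^\iota=Z$. Hence $Y^\iota=Z\vDash_{\logic{CL}}A^\iota$, so, since $Y^\iota$ and $A^\iota$ are $\iota$-images of $Y$ and $A$, Corollary~\ref{Cor:LRCNandCLequiv} in the opposite direction yields $Y^\iota\vDash_{\LRCN}A^\iota$. It then remains only to pass from $Y^\iota\vDash_{\LRCN}A^\iota$ back to $Y\vDash_{\LRCN}A$; as $Y$ is finite and $Y\subseteq X$, that finishes the theorem.

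The last passage is the only place needing a word of care, and it is where I expect the mild friction. Lemma~\ref{Lemma:LRCNCOnClosure} is phrased as a biconditional quantified over \emph{all} atomic injective substitutions, whereas here we have the image condition only for our one fixed $\iota$. But the construction underlying that lemma in fact delivers the sharper per-$\iota$ fact: from an assignment $f$ witnessing $Y\not\vDash_{\LRCN}A$, set $f'(\overline{x},q)=f(\overline{x},p)$ whenever $q=\iota(\overline{x},p)$ (well-defined, since $\iota$ is injective in its atom argument for each fixed $\overline{x}$) and let $f'$ be arbitrary elsewhere; an induction on formulas of the same shape as the proof of Lemma~\ref{Lemma:Bullet} shows $f'(\varepsilon,B^\iota)=f(\varepsilon,B)$ for every $B$, so $f'$ would witness $Y^\iota\not\vDash_{\LRCN}A^\iota$, a contradiction. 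Thus there is no genuine conceptual obstacle: the sole substantive ingredient is classical compactness, and the rest is bookkeeping --- threading the $\iota$-translation through the earlier lemmas and checking that the preimage $Y$ is finite and contained in $X$.
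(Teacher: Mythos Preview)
Your proposal is correct and follows essentially the same route as the paper: pass through an atomic injective $\iota$, invoke Corollary~\ref{Cor:LRCNandCLequiv} to convert to a classical entailment, apply classical compactness, and transport the finite witness back. The paper compresses your last two paragraphs into a single ``WLOG $X$ and $A$ are $\iota$-images'' appeal to Lemma~\ref{Lemma:LRCNCOnClosure}; you are right that this implicitly uses the per-$\iota$ direction $Y^\iota\vDash_{\LRCN}A^\iota\Rightarrow Y\vDash_{\LRCN}A$, and your explicit construction of $f'$ (equivalently, $f\bullet\iota^{-1}$ with $\iota^{-1}$ viewed as a plain substitution) is exactly the straightforward verification the paper leaves to the reader.
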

\begin{proof}
Suppose $X\vDash_{\LRCN}A$. By Lemma \ref{Lemma:LRCNCOnClosure}, we can assume that $X$ and $A$ are the images of some atomic injective $\iota$. 
It then follows by Corollary \ref{Cor:LRCNandCLequiv} that $X\vDash_{\logic{CL}}A$. By the compactness of classical logic, 
for some finite $Y\subseteq X$, $Y\vDash_{\logic{CL}}A$. By Corollary \ref{Cor:LRCNandCLequiv}, $Y\vDash_{\LRCN}A$, as desired. 
\end{proof}

\subsection{Faithful $\CLV$}

Recall the distinction between \emph{faithful} lericone substitutions and lericone substiutions \emph{simpliciter}; this distinction, we showed, is reflected in the relationship between $\mathbf{B}$ (closed under faithful substitutions) and $\mathbf{BM}$ (closed under all lericone substitutions). By encoding Involutive Transparency in its axioms, $\mathbf{B}$ acts as a sort of faithful counterpart to $\mathbf{BM}$.

We can now turn our attention to a supersystem of $\CLV$ that bears to $\CLV$ the same relationship that $\mathbf{B}$ bears to $\mathbf{BM}$.

\begin{mdef}
    $X^{\mathsf{LRCN}^{\sim}}=\lbrace A:\sigma(\varepsilon,A)\in X\mbox{ for all faithful lericone substitutions }\sigma\rbrace$
\end{mdef}

\begin{mdef}
    A set of sentences $X$ is closed under faithful lericone substitutions when for all faithful lericone substitutions $\sigma$, if $A\in X$ then $\sigma(\varepsilon,A)\in X$.
\end{mdef}

Recall the definition of the operation $\star$. We make a few observations. Note first that Lemma \ref{lemma:star} holds \emph{a fortiori} in case $\sigma$ and $\tau$ are faithful lericone substitutions. This observation allows us to infer a second lemma:

\begin{lemma}
     If $\sigma,\tau$ are faithful lericone substitutions, then $\sigma\star \tau$ is a faithful lericone substitution.
\end{lemma}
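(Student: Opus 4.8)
The plan is to verify the faithfulness condition directly, using Lemma~\ref{lemma:star} to unfold the action of $\sigma\star\tau$ on arbitrary formulas. Since $\sigma\star\tau$ is a lericone substitution by construction (it is defined as the map $\langle\overline{x},p\rangle\mapsto\sigma(\overline{x},\tau(\overline{x},p))$), the only thing left to check is that it is faithful, i.e.\ that $(\sigma\star\tau)(\overline{x},A)=(\sigma\star\tau)(\overline{y},A)$ for all $A\in\LL$ and all $\overline{x},\overline{y}\in\LRCN$ with $\overline{x}\sim\overline{y}$.

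So I would fix such an $A$ and such $\overline{x},\overline{y}$ with $\overline{x}\sim\overline{y}$. By Lemma~\ref{lemma:star}, $(\sigma\star\tau)(\overline{x},A)=\sigma(\overline{x},\tau(\overline{x},A))$ and $(\sigma\star\tau)(\overline{y},A)=\sigma(\overline{y},\tau(\overline{y},A))$. Since $\tau$ is faithful and $\overline{x}\sim\overline{y}$, the inner formulas coincide: $\tau(\overline{x},A)=\tau(\overline{y},A)$; write $A'$ for this common value. Then the two quantities become $\sigma(\overline{x},A')$ and $\sigma(\overline{y},A')$, and since $\sigma$ is faithful and $\overline{x}\sim\overline{y}$, these are equal. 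Hence $(\sigma\star\tau)(\overline{x},A)=(\sigma\star\tau)(\overline{y},A)$, which is exactly faithfulness of $\sigma\star\tau$.

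There is essentially no obstacle here — the proof is a two-line unfolding — but the one point that deserves a moment's care is the order in which the two faithfulness hypotheses are applied: one must first collapse the inner substitution using faithfulness of $\tau$, so that the outer applications of $\sigma$ act on one and the same formula $A'$, and only then invoke faithfulness of $\sigma$. Note also that Lemma~\ref{lemma:star} is needed in full generality, on arbitrary formulas rather than just atoms, since faithfulness is a condition on the extended substitution; the remark immediately preceding this lemma — that Lemma~\ref{lemma:star} holds \emph{a fortiori} for faithful substitutions — is precisely what licenses this use.
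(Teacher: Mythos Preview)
Your proof is correct and follows essentially the same approach as the paper: unfold $(\sigma\star\tau)(\overline{x},A)$ via Lemma~\ref{lemma:star}, use faithfulness of $\tau$ to identify the inner formulas, then faithfulness of $\sigma$ to finish. The paper presents the chain of equalities $\sigma(\overline{x},\tau(\overline{x},A))=\sigma(\overline{x},\tau(\overline{y},A))=\sigma(\overline{y},\tau(\overline{y},A))$ in one line, but the argument is identical to yours.
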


\begin{proof}
    Suppose $\sigma,\tau$ to be faithful and suppose that $\overline{x}\sim\overline{y}$. Then by Lemma \ref{lemma:star},   $(\sigma\star\tau)(\overline{x},A)$ is $\sigma(\overline{x},\tau(\overline{x},A))$. By assumption that $\sigma$ and $\tau$ are faithful, $\sigma(\overline{x},\tau(\overline{x},A))=\sigma(\overline{x},\tau(\overline{y},A))=\sigma(\overline{y},\tau(\overline{y},A)$, which, by Lemma \ref{lemma:star} once more, is equal to $(\sigma\star\tau)(\overline{y},A)$. As $A$, $\overline{x}$, and $\overline{y}$ were arbitrary, we conclude that $\sigma\star\tau$ is faithful.
\end{proof}

Knowing that $\sigma\star\tau$ is faithful in case $\sigma$ and $\tau$ are allows us to modify the logic of the proof of Lemma 5 to infer the following:

\begin{lemma}
    For a set of formulas $X$, $X^{\LRCN^{\sim}}$ is the largest subset of $X$ closed under faithful lericone substitutions.
\end{lemma}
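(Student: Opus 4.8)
The plan is to mimic the proof of Lemma~\ref{lemma:largest} exactly, substituting ``faithful lericone substitution'' for ``lericone substitution'' throughout and invoking the two observations just recorded in place of the bare fact that $\star$ preserves lericone substitutions. Concretely, there are two things to show: first, that $X^{\LRCN^{\sim}}$ is itself closed under faithful lericone substitutions; second, that any $Y\subseteq X$ closed under faithful lericone substitutions is contained in $X^{\LRCN^{\sim}}$. Together these yield that $X^{\LRCN^{\sim}}$ is the largest such subset.

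For the first part, I would fix $A\in X^{\LRCN^{\sim}}$ and a faithful lericone substitution $\sigma$, and argue that $\sigma(\varepsilon,A)\in X^{\LRCN^{\sim}}$. By the definition of $X^{\LRCN^{\sim}}$, this amounts to showing that for every faithful lericone substitution $\tau$ we have $\tau(\varepsilon,\sigma(\varepsilon,A))\in X$. By Lemma~\ref{lemma:star} (which, as observed, holds \emph{a fortiori} for faithful substitutions), $\tau(\varepsilon,\sigma(\varepsilon,A))=(\tau\star\sigma)(\varepsilon,A)$. Now the key point: $\tau\star\sigma$ is a faithful lericone substitution by the lemma just proved, and $A\in X^{\LRCN^{\sim}}$, so by definition $(\tau\star\sigma)(\varepsilon,A)\in X$, as needed. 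For the second part, if $Y\subseteq X$ is closed under faithful lericone substitutions and $A\in Y$, then for every faithful $\sigma$ we have $\sigma(\varepsilon,A)\in Y\subseteq X$, so $A\in X^{\LRCN^{\sim}}$; hence $Y\subseteq X^{\LRCN^{\sim}}$. Since $X^{\LRCN^{\sim}}\subseteq X$ trivially and $X^{\LRCN^{\sim}}$ is closed under faithful lericone substitutions by the first part, it is the largest subset of $X$ with that property.

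There is essentially no obstacle here: the entire content has been front-loaded into the two preceding lemmas (that $\star$ of two faithful substitutions is faithful, and that Lemma~\ref{lemma:star} applies to faithful substitutions), so the present proof is a routine bookkeeping exercise. The one thing to be careful about is not to claim more than is needed---we only use that $\tau\star\sigma$ is faithful, not any stronger closure property---and to make sure the quantifier structure in the definition of $X^{\LRCN^{\sim}}$ is handled correctly (the ``for all faithful $\tau$'' in the membership condition for $\sigma(\varepsilon,A)$ versus the ``for all faithful $\sigma$'' in the membership condition for $A$). I would simply write: ``The proof is exactly that of Lemma~\ref{lemma:largest}, with `lericone substitution' replaced by `faithful lericone substitution' and the appeal to closure of $\star$ replaced by the preceding lemma,'' and then include the short explicit argument above for completeness.
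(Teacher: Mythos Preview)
Your proposal is correct and follows exactly the approach the paper takes: the paper simply remarks that knowing $\sigma\star\tau$ is faithful whenever $\sigma,\tau$ are allows one to ``modify the logic of the proof of Lemma~\ref{lemma:largest}'' to obtain the result, without spelling out further details. Your write-up is in fact more explicit than the paper's own treatment, and the care you take with the quantifier structure and with noting $X^{\LRCN^{\sim}}\subseteq X$ via the identity substitution is appropriate.
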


\begin{mdef}
    A faithful lericone-sensitive truth-value assignment is a lericone-sensitive truth-value assignment such that for all $\overline{x}\overline{y}\in\mathsf{LRCN}$ and formulas $p\in\At$, $f(\overline{x}\overline{y},p)=f(\overline{x}nn\overline{y},p)$. 
\end{mdef}

\begin{lemma}
    If $f$ is a faithful lericone-sensitive truth-value assignment, then for all $A\in\LL$, $f(\overline{xy},A)=f(\overline{x}nn\overline{y},A)$
\end{lemma}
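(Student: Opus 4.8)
The plan is to prove this by induction on the structure of $A$, in close parallel with the proof of Lemma~\ref{transform_lemma} and its sibling Lemma~\ref{adding}. The base case, where $A=p$ is an atom, is exactly the hypothesis defining a faithful lericone-sensitive truth-value assignment. The conjunction and disjunction cases are immediate from the inductive hypothesis, since $f(\overline{xy},B_1\ast B_2)$ is computed componentwise via $\inf$ or $\sup$ from $f(\overline{xy},B_1)$ and $f(\overline{xy},B_2)$, and the lericone sequence handed to each immediate subformula is unchanged by the connective.

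For the negation case, I would compute $f(\overline{xy},\neg B)=1-f(n\overline{xy},B)$; since $n\overline{xy}=(n\overline{x})\overline{y}$ and $n\overline{x}nn\overline{y}=(n\overline{x})nn\overline{y}$, the inductive hypothesis applied with $n\overline{x}$ in place of $\overline{x}$ gives $f(n\overline{xy},B)=f(n\overline{x}nn\overline{y},B)$, and hence $1-f(n\overline{xy},B)=1-f(n\overline{x}nn\overline{y},B)=f(\overline{x}nn\overline{y},\neg B)$. For the conditional case $A=B\to C$, I would split according to whether $\overline{xy}$ (equivalently $\overline{x}nn\overline{y}$, since these are $c$-free status is the same) is $c$-free; in our setting $\overline{x},\overline{y}\in\LRN$ so both sequences are $c$-free and we use the clause $f(\overline{z},B\to C)=\sup(1-f(l\overline{z},B),f(r\overline{z},C))$. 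Then $l\overline{xy}=(l\overline{x})\overline{y}$ and $l\overline{x}nn\overline{y}=(l\overline{x})nn\overline{y}$, and similarly with $r$, so two applications of the inductive hypothesis — once with $l\overline{x}$, once with $r\overline{x}$ — yield $f(l\overline{xy},B)=f(l\overline{x}nn\overline{y},B)$ and $f(r\overline{xy},C)=f(r\overline{x}nn\overline{y},C)$, whence the two $\sup$ expressions agree.

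I do not anticipate a serious obstacle here: the argument is a routine structural induction, and the only thing to be careful about is the bookkeeping showing that prepending a single letter $l$, $r$, or $n$ to $\overline{x}$ commutes with the insertion of the $nn$ block between $\overline{x}$ and $\overline{y}$ — i.e. that $u\overline{x}nn\overline{y}=(u\overline{x})nn\overline{y}$ for $u\in\{l,r,n\}$ — which is immediate from associativity of concatenation. A minor subtlety worth flagging is that we should state the induction for arbitrary $\overline{x}\in\LRN$ (not a fixed one) so that the inductive hypothesis is available at the shifted sequences $l\overline{x}$, $r\overline{x}$, $n\overline{x}$; with that framing the proof goes through verbatim as above, and one may reasonably leave the full details to the reader, as the paper does for Lemma~\ref{adding}.
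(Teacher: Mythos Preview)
Your approach is exactly the paper's --- the paper simply writes ``By induction on $A$'' and leaves the details to the reader --- and your outline supplies those details along the right lines. One small slip to clean up in the conditional case: in the semantic clause the $c$-\emph{free} branch prepends $c$ (sending $\overline{z}$ to $c\overline{z}$) while the ``otherwise'' branch prepends $l$ and $r$, so you have named the wrong clause, and you should not assume $\overline{x},\overline{y}\in\LRN$ outright; rather, just use your correct observation that $\overline{xy}$ and $\overline{x}nn\overline{y}$ always fall into the \emph{same} branch, together with the fact that prepending any $u\in\{l,r,n,c\}$ to $\overline{x}$ commutes with the $nn$ insertion, and the induction goes through in either branch.
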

\begin{proof}
    By induction on $A$.
\end{proof}

As Lemma 6 holds \emph{a fortiori} in case $f$ is a faithful lericone-sensitive assignment and $\sigma$ is a faithful lericone substitution, we can infer the following lemma almost for free:

\begin{lemma}
    $\mathbf{CLV}^{\sim}$ is closed under faithful lericone substitutions, \emph{i.e.}, $(\mathbf{CLV}^{\sim})^{\LRCN^{\sim}}$ $=$ $\mathbf{CLV}^{\sim}$.
\end{lemma}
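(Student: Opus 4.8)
The plan is to mirror the structure of the (unproven-in-the-excerpt but clearly intended) argument for Lemma~\ref{lemma:largest}, specialized to the faithful setting, exactly as the surrounding text advertises (``we can infer the following lemma almost for free''). So I would first recall that $\mathbf{CLV}^{\sim}$ is defined as a set of faithfully-lericone-valid formulas, i.e. the set of $A$ such that $f(\varepsilon,A)=1$ for every faithful lericone-sensitive assignment $f$. The inclusion $\mathbf{CLV}^{\sim}\subseteq(\mathbf{CLV}^{\sim})^{\LRCN^{\sim}}$ is the substantive direction; the reverse inclusion is trivial since for any set $X$, $X^{\LRCN^{\sim}}\subseteq X$ (take $\sigma$ to be the identity lericone substitution, which is faithful).

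For the substantive direction, I would let $A\in\mathbf{CLV}^{\sim}$ and let $\sigma$ be an arbitrary faithful lericone substitution; the goal is $\sigma(\varepsilon,A)\in\mathbf{CLV}^{\sim}$, i.e. $f(\varepsilon,\sigma(\varepsilon,A))=1$ for every faithful assignment $f$. The key tool is the composite assignment $(f\bullet\sigma)$ from the definition preceding Lemma~\ref{Lemma:Bullet}, together with Lemma~\ref{Lemma:Bullet} itself, which gives $f(\varepsilon,\sigma(\varepsilon,A))=(f\bullet\sigma)(\varepsilon,A)$. Since $A\in\mathbf{CLV}^{\sim}$, it suffices to check that $(f\bullet\sigma)$ is a \emph{faithful} lericone-sensitive assignment whenever $f$ is faithful and $\sigma$ is a faithful lericone substitution. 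This is the one genuinely new computation: unwinding the definition, $(f\bullet\sigma)(\overline{x}\overline{y},p)=f(\overline{x}\overline{y},\sigma(\overline{x}\overline{y},p))$, and faithfulness of $\sigma$ gives $\sigma(\overline{x}\overline{y},p)=\sigma(\overline{x}nn\overline{y},p)$ while faithfulness of $f$ (applied at the formula $\sigma(\overline{x}nn\overline{y},p)$) gives $f(\overline{x}\overline{y},\sigma(\overline{x}nn\overline{y},p))=f(\overline{x}nn\overline{y},\sigma(\overline{x}nn\overline{y},p))=(f\bullet\sigma)(\overline{x}nn\overline{y},p)$, using $\overline{x}nn\overline{y}\sim\overline{x}\overline{y}$. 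Hence $(f\bullet\sigma)$ is faithful, $(f\bullet\sigma)(\varepsilon,A)=1$, and therefore $f(\varepsilon,\sigma(\varepsilon,A))=1$ as required.

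The remaining subtlety is a bookkeeping one: the displayed equation $(\mathbf{CLV}^{\sim})^{\LRCN^{\sim}}=\mathbf{CLV}^{\sim}$ is asserted to be equivalent to ``$\mathbf{CLV}^{\sim}$ is closed under faithful lericone substitutions.'' That equivalence is exactly the content of the preceding lemma ($X^{\LRCN^{\sim}}$ is the largest subset of $X$ closed under faithful lericone substitutions): a set $X$ equals $X^{\LRCN^{\sim}}$ iff $X$ is itself closed under faithful lericone substitutions. So once closure is established I would invoke that lemma to conclude the stated equality. I do not expect a real obstacle here — the only place care is needed is making sure the faithfulness of $f$ is applied at the \emph{formula} $\sigma(\overline{x}nn\overline{y},p)$ and not merely at an atom, which is legitimate because the faithfulness-for-formulas statement (the lemma proved ``by induction on $A$'' just above) upgrades atomic faithfulness of $f$ to faithfulness at all formulas.

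\begin{proof}
    The inclusion $(\mathbf{CLV}^{\sim})^{\LRCN^{\sim}}\subseteq\mathbf{CLV}^{\sim}$ is immediate, since the identity lericone substitution is faithful. For the converse, by the previous lemma it suffices to show $\mathbf{CLV}^{\sim}$ is closed under faithful lericone substitutions. So let $A\in\mathbf{CLV}^{\sim}$, let $\sigma$ be a faithful lericone substitution, and let $f$ be an arbitrary faithful lericone-sensitive assignment. We first check that $(f\bullet\sigma)$ is faithful. Indeed, for any $\overline{x},\overline{y}$ and $p\in\At$,
    \begin{align*}
        (f\bullet\sigma)(\overline{x}\overline{y},p) &= f(\overline{x}\overline{y},\sigma(\overline{x}\overline{y},p)) \\
            &= f(\overline{x}\overline{y},\sigma(\overline{x}nn\overline{y},p)) \\
            &= f(\overline{x}nn\overline{y},\sigma(\overline{x}nn\overline{y},p)) \\
            &= (f\bullet\sigma)(\overline{x}nn\overline{y},p),
    \end{align*}
    where the second equality uses that $\sigma$ is faithful and $\overline{x}\overline{y}\sim\overline{x}nn\overline{y}$, and the third uses that $f$ is faithful (applied to the formula $\sigma(\overline{x}nn\overline{y},p)$, which is legitimate by the lemma above). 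Hence $(f\bullet\sigma)$ is a faithful lericone-sensitive assignment, so $(f\bullet\sigma)(\varepsilon,A)=1$. By Lemma~\ref{Lemma:Bullet}, $f(\varepsilon,\sigma(\varepsilon,A))=(f\bullet\sigma)(\varepsilon,A)=1$. As $f$ was arbitrary, $\sigma(\varepsilon,A)\in\mathbf{CLV}^{\sim}$, which establishes closure and hence the stated equality.
\end{proof}
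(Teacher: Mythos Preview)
Your proof is correct and follows essentially the same approach as the paper, which merely gestures at the argument (``almost for free'' from Lemma~\ref{Lemma:Bullet}). You supply the one detail the paper leaves implicit---that $(f\bullet\sigma)$ is faithful whenever $f$ and $\sigma$ are---and your computation handles this correctly, including the care taken to invoke the formula-level faithfulness lemma when applying faithfulness of $f$ to the (possibly non-atomic) formula $\sigma(\overline{x}nn\overline{y},p)$.
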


This allows us to infer that $\mathbf{CLV}^{\sim}\subseteq\mathbf{CL}^{\LRCN^{\sim}}$. To show the converse, we note again that many lemmas established in pursuit of showing the relationship between $\mathbf{CLV}$ and $\mathbf{CL}^{\LRCN}$ carry over immediately to the case of \emph{faithful} lericone substitutions.

\begin{mdef}
Let $B\in\mathcal{L}$ be a skeleton of a formula $A$. We call $B$ a \emph{faithful skeleton} if there exists an injective atomic faithful lericone substitution $\sigma$ such that $\sigma(\varepsilon, A)=B$.
\end{mdef}

\begin{lemma}
    For $X$ a set of formulas, for all $A\in\mathcal{L}$ and every faithful skeleton $B$ of $A$, $A\in X^{\LRCN^{\sim}}$ iff $B\in X^{\LRCN^{\sim}}$.
\end{lemma}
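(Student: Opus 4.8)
The plan is to mirror the proof of Lemma~\ref{lemma:any_skeleton}, checking that the ingredients it relied on all have faithful analogues. The ``only if'' direction is immediate: if $A\in X^{\LRCN^{\sim}}$ then $\sigma(\varepsilon,A)\in X$ for every faithful lericone substitution $\sigma$, and in particular for every \emph{atomic injective} faithful $\sigma$; since $B$ is a faithful skeleton of $A$, $B=\sigma(\varepsilon,A)$ for one such $\sigma$, so $B\in X$. To upgrade this to $B\in X^{\LRCN^{\sim}}$ rather than merely $B\in X$, I would invoke the fact (established just above, that $X^{\LRCN^{\sim}}$ is the largest subset of $X$ closed under faithful lericone substitutions) together with the observation that if $A\in X^{\LRCN^{\sim}}$ then the whole faithful-substitution orbit of $A$ lies in $X$, which by maximality means it lies in $X^{\LRCN^{\sim}}$; applying a faithful substitution to $A$ therefore keeps us inside $X^{\LRCN^{\sim}}$.

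For the ``if'' direction, suppose $B\in X^{\LRCN^{\sim}}$, and fix a faithful skeleton $B$ of $A$, so there is an injective atomic faithful lericone substitution $\iota$ with $\iota(\varepsilon,A)=B$. The key step is to recover $A$ from $B$ by a faithful substitution. I would use the plain substitution $\iota^{-1}$ already defined in the excerpt, for which $\iota^{-1}(\iota(\overline{x},C))=C$ for all $C$ and all $\overline{x}$. A plain substitution, regarded as a lericone substitution, is constant in its first argument and hence trivially faithful ($\overline{x}\sim\overline{y}$ forces $\iota^{-1}(\overline{x},p)=\iota^{-1}(p)=\iota^{-1}(\overline{y},p)$). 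Since $X^{\LRCN^{\sim}}$ is closed under faithful lericone substitutions and $B\in X^{\LRCN^{\sim}}$, applying $\iota^{-1}$ gives $\iota^{-1}(\varepsilon,B)=\iota^{-1}(\varepsilon,\iota(\varepsilon,A))=A\in X^{\LRCN^{\sim}}$, as required.

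The main thing to get right — and the only place any real content enters — is the claim that $X^{\LRCN^{\sim}}$ is closed under \emph{plain} substitutions, which one gets by noting plain substitutions are a special case of faithful lericone substitutions; this is the exact analogue of the step ``since $\lrcnclose{X}$ is closed under lericone substitutions, it's closed under plain substitutions as well'' in the proof of Lemma~\ref{lemma:any_skeleton}, and it is what licenses feeding $\iota^{-1}$ to the faithful-closure property. Everything else is bookkeeping: one should double-check that $\iota$ being faithful is not actually needed for the ``if'' direction (it is only needed to know that $B$ is a \emph{faithful} skeleton, i.e.\ to match the hypothesis), and that the inverse substitution identity from the earlier lemma is stated generally enough to apply here. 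I expect no obstacle beyond confirming these compatibilities, so the proof will be short, essentially a pointer to the argument for Lemma~\ref{lemma:any_skeleton} with ``lericone substitution'' uniformly replaced by ``faithful lericone substitution.''
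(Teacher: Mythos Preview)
Your proposal is correct and takes essentially the same approach as the paper: both hinge on the observation that every plain substitution is automatically a faithful lericone substitution, so $\iota^{-1}$ may be applied within $X^{\LRCN^{\sim}}$ to recover $A$ from $B$. Your treatment of the ``only if'' direction is slightly more roundabout than needed---closure of $X^{\LRCN^{\sim}}$ under faithful substitutions is already established, so you can apply $\iota$ directly rather than re-deriving closure via maximality---but the substance matches.
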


\begin{proof}
    Just as every plain substitution is a lericone substitution, so, too, is every plain substitution a faithful lericone substitution. Consequently, selecting an atomic injective faithful $\sigma$ such that $\sigma(\varepsilon,A)=B$ allows us to rehearse the reasoning of the proof of Lemma \ref{lemma:any_skeleton} to the case of $X^{\LRCN^{\sim}}$.
\end{proof}

The reasoning of the proof of Lemma \ref{Lemma:SubstitutionsContainedInValid}---that from any lericone counterexample $f$, one can construct a classical counterexample $g$---applies \emph{a fortiori} to faithful lericone assignments $f$, allowing us to infer that $\mathbf{CL}^{\LRCN^{\sim}}\subseteq\mathbf{CLV}^{\sim}$, whence we may conclude the following:

\begin{theorem}
    $\mathbf{CL}^{\LRCN^{\sim}}=\mathbf{CLV}^{\sim}$
\end{theorem}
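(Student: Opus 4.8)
The plan is to prove the identity by the same two-inclusion strategy used for the non-faithful identity $\lrcnclose{\logic{CL}}=\CLV$, checking at each step that faithfulness is preserved.

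For the inclusion $\mathbf{CLV}^{\sim}\subseteq\mathbf{CL}^{\LRCN^{\sim}}$, I would first record that $\mathbf{CLV}^{\sim}\subseteq\logic{CL}$: given a classical valuation $v\colon\At\to\{0,1\}$, the assignment $f_v(\overline{x},p)=v(p)$ (constant in its first argument) is trivially a faithful lericone-sensitive assignment, and a routine induction shows $f_v(\varepsilon,A)=v(A)$ for every $A$, so a faithful-lericone-valid formula is classically valid. We have already shown that $\mathbf{CLV}^{\sim}$ is closed under faithful lericone substitutions and that $\mathbf{CL}^{\LRCN^{\sim}}$ is the largest subset of $\logic{CL}$ closed under faithful lericone substitutions; the inclusion is then immediate.

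For the converse $\mathbf{CL}^{\LRCN^{\sim}}\subseteq\mathbf{CLV}^{\sim}$, I would argue by contraposition, transcribing the proof of Lemma~\ref{Lemma:SubstitutionsContainedInValid}. Suppose $B\notin\mathbf{CLV}^{\sim}$. By the faithful-skeleton analog of Lemma~\ref{lemma:any_skeleton} we may assume without loss of generality that $B$ is a faithful skeleton, so fix a faithful lericone-sensitive assignment $f$ with $f(\varepsilon,B)=0$ and define a classical assignment $g$ by $g(p)=f(\lrcn(B[p]),p)$ when $p$ occurs in $B$ and $g(p)=1$ otherwise. The one point where the faithful case genuinely differs from the bare case is well-definedness of $g$: in a faithful skeleton, all occurrences of a given atom must sit at lericone sequences that are $\sim$-equivalent (otherwise the witnessing injective atomic faithful substitution would send two $\sim$-inequivalent pairs to the same atom, contradicting its injectivity modulo $\sim$), and a faithful assignment assigns equal values to $\sim$-equivalent sequences, since the generating clause $f(\overline{x}\overline{y},p)=f(\overline{x}nn\overline{y},p)$ is preserved under the equivalence relation it generates. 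Granting this, the induction on $B$ from Lemma~\ref{Lemma:SubstitutionsContainedInValid} goes through unchanged and gives $g(B)=f(\varepsilon,B)=0$; hence $B\notin\logic{CL}$, and since $\mathbf{CL}^{\LRCN^{\sim}}\subseteq\logic{CL}$ we conclude $B\notin\mathbf{CL}^{\LRCN^{\sim}}$. Combining the two inclusions yields $\mathbf{CL}^{\LRCN^{\sim}}=\mathbf{CLV}^{\sim}$. I expect the only step requiring any care to be the well-definedness of $g$, and even that is routine once one reads ``injective'' for faithful atomic substitutions as ``injective modulo $\sim$''; no idea beyond those already deployed in the non-faithful development is needed.
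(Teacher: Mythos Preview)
Your proposal is correct and follows essentially the same two-inclusion strategy as the paper: use closure of $\mathbf{CLV}^{\sim}$ under faithful lericone substitutions together with the maximality of $\mathbf{CL}^{\LRCN^{\sim}}$ for one direction, and transcribe the skeleton-based argument of Lemma~\ref{Lemma:SubstitutionsContainedInValid} for the other. You have in fact been more careful than the paper at the one genuinely new point: the paper dispatches the converse with an ``applies \emph{a fortiori}'' remark, whereas you correctly isolate that the well-definedness of the classical assignment $g$ now rests on the fact that, in a faithful skeleton, distinct occurrences of the same atom lie at $\sim$-equivalent lericone sequences, and that a faithful assignment is constant on $\sim$-classes---and you also flag that ``injective'' for a faithful atomic substitution must be read modulo $\sim$, a point the paper leaves implicit.
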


We can note the distinctness of $\mathbf{CLV}$ and $\mathbf{CLV}^{\sim}$ by observing that $A\rightarrow\neg\neg A$ is not valid in $\mathbf{CLV}$ although it is valid in $\mathbf{CLV}^{\sim}$. Consequently, $\mathbf{CLV}\subsetneq\mathbf{CLV}^{\sim}$. For this reason, that $\mathbf{CLV}$ is a relevant logic need not immediately entail that $\mathbf{CLV}^{\sim}$ is a relevant logic. We start the task of showing it to be relevant via the following lemma:

\begin{lemma}
    $\bar{x}nn\bar{y}$ is positive (negative) iff $\bar{x}\bar{y}$ is positive (negative)
\end{lemma}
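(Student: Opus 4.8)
The plan is to prove the statement by induction on the structure of the sequence $\overline{x}$, reducing everything to the behavior of the polarity-modifying operations $l$, $r$, $n$ as $\overline{x}$ is consumed from the left. The key observation is that polarity of a concatenation $\overline{x}\overline{z}$ is determined by first computing the polarity contribution of $\overline{x}$ as a ``polarity operator'' (an element of $\mathbb{Z}/2\mathbb{Z}$, where $l$ and $n$ both act as a flip and $r$ acts as the identity), then applying the polarity of $\overline{z}$. Since $nn$ contributes two flips, it acts as the identity operator, and so inserting $nn$ anywhere in a sequence leaves the overall polarity unchanged.

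More concretely, first I would record the easy auxiliary fact that for any $\overline{x},\overline{z}\in\LRN$, the polarity of $\overline{x}\overline{z}$ equals the polarity of $\overline{z}$ if $\overline{x}$ contains an even number of occurrences of symbols from $\{l,n\}$, and the opposite of the polarity of $\overline{z}$ otherwise; this is itself a routine induction on $\overline{x}$ using the clauses of the \textbf{Definition (Polarity)}. (Care is needed because the polarity definition recurses by \emph{prepending} $l$, $r$, $n$, so one actually wants the companion fact that polarity of $\overline{z}\overline{w}$ relates to polarity of $\overline{z}$ via the count of flip-symbols in $\overline{w}$; either direction works and each is a one-line induction.) Granting this lemma, the count of $\{l,n\}$-symbols in $\overline{x}nn\overline{y}$ differs from that in $\overline{x}\overline{y}$ by exactly $2$, hence has the same parity, so the two sequences have the same polarity. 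That immediately gives the biconditional, since ``positive'' and ``negative'' are the only two options and are mutually exclusive.

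Alternatively — and this is perhaps cleaner to write up — I would do a direct induction on $\overline{x}$. For the base case $\overline{x}=\varepsilon$ we must show $nn\overline{y}$ is positive iff $\overline{y}$ is positive: applying the $n$-clause twice flips the polarity of $\overline{y}$ and then flips it back, so $nn\overline{y}$ has the same polarity as $\overline{y}$. For the inductive step, suppose the claim holds for $\overline{x}$ and consider $s\overline{x}$ with $s\in\{l,r,n\}$. By the polarity clauses, the polarity of $s(\overline{x}nn\overline{y})=s\overline{x}nn\overline{y}$ is obtained from the polarity of $\overline{x}nn\overline{y}$ by the fixed rule associated to $s$ (flip if $s\in\{l,n\}$, keep if $s=r$), and likewise the polarity of $s\overline{x}\overline{y}$ is obtained from the polarity of $\overline{x}\overline{y}$ by that same rule; since by the inductive hypothesis $\overline{x}nn\overline{y}$ and $\overline{x}\overline{y}$ have the same polarity, so do $s\overline{x}nn\overline{y}$ and $s\overline{x}\overline{y}$. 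This closes the induction.

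I do not expect any real obstacle here — the statement is essentially the observation that double negation is polarity-neutral, packaged so as to be applied later to $\sim$-equivalent sequences (recall $\overline{x}nn\overline{y}\sim'\overline{x}\overline{y}$). The only thing to be mildly careful about is matching the induction to the fact that the \textbf{Polarity} definition recurses on the \emph{first} symbol of the sequence rather than the last; doing the induction on $\overline{x}$ (the prefix before the inserted $nn$) rather than on $\overline{y}$ sidesteps this cleanly, since then each inductive step peels off a leading symbol exactly as the definition does.
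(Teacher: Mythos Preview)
Your proof is correct. The paper actually states this lemma without proof, treating it as immediate from the definition of polarity; your induction on $\overline{x}$ (peeling off the leading symbol to match the recursion in the \textbf{Polarity} definition) cleanly supplies the omitted details, and the alternative parity-counting argument you sketch would work equally well.
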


This means that with respect to the subformulas that matter, the assignment $h$ defined in Lemma \ref{Lemma:h-definition} is a faithful assignment. But it need not be faithful in general, requiring that we show how to produce a faithful version of $h$. 

\begin{lemma}\label{Lemma:h-sim-definition}
   Define the assignment $h^{\sim}$ by
    \[
        h^{\sim}(\overline{x},p)=\left\{
        \begin{array}{rl}
            f^+_A(\overline{x}, p) & \text{ if } p \text{ occurs in $A$ and $\overline{x}\sim\lrcn((A\to B)[p])$} \\
            f^-_B(\overline{x}, p) & \text{ if } p \text{ occurs in $B$ and $\overline{x}\sim\lrcn((A\to B)[p])$} \\
            1 & \text{ otherwise}.
        \end{array}\right.
    \]

   \noindent Then $h^{\sim}$ both has the properties described in Lemma \ref{Lemma:h-definition} and is faithful.
\end{lemma}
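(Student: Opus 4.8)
The plan is to verify two things separately: that $h^{\sim}$ still ``tracks'' $f^{+}_{A}$ and $f^{-}_{B}$ on the subformula occurrences of $A$ and $B$ exactly as $h$ does in Lemma~\ref{Lemma:h-definition}, and that $h^{\sim}$ is a faithful assignment. The guiding intuition is that $h^{\sim}$ is nothing but the ``$\sim$-saturation'' of $h$: it is meant to be constant on each $\sim$-class of lericone sequences, taking on that class the value $h$ assigns at a representative that is genuinely realised as $\lrcn((A\to B)[p])$ for some atom occurrence, and the inert value $1$ on classes containing no such representative.

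For the tracking clause I would simply re-run the induction from the proof of Lemma~\ref{Lemma:h-definition} with $h^{\sim}$ in place of $h$. The base case is immediate: if $p$ occurs in $A$ and $\overline{x}=\lrcn((A\to B)[p])$ then $\overline{x}\sim\overline{x}$, so the first clause of the definition gives $h^{\sim}(\overline{x},p)=f^{+}_{A}(\overline{x},p)$, and symmetrically for $B$ and $f^{-}_{B}$. For the inductive steps, the key observation is that every sequence at which the induction evaluates $h^{\sim}$---the sequences $l\overline{x}c$, $r\overline{x}c$, $n\overline{x}c$ obtained from the lericone sequence $\overline{x}c$ of the subformula occurrence in play---is itself of the form $\lrcn((A\to B)[D])$ for the relevant immediate subformula $D$, so the inductive hypothesis applies to it and the conjunction, disjunction, negation, and conditional computations are word-for-word those of Lemma~\ref{Lemma:h-definition}, appealing to the clause-by-clause facts already recorded there for $f^{+}_{A}$ and $f^{-}_{B}$.

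For faithfulness it is enough to check the defining atomic condition $h^{\sim}(\overline{u}\,\overline{v},p)=h^{\sim}(\overline{u}\,nn\,\overline{v},p)$, after which the companion lemma on faithful assignments (the one extending atomic faithfulness to all formulas) upgrades this to arbitrary $A$. Writing $\overline{z}=\overline{u}\,\overline{v}$ and $\overline{z}'=\overline{u}\,nn\,\overline{v}$, we have $\overline{z}\sim\overline{z}'$; since $\sim$ is an equivalence relation, $\overline{z}$ and $\overline{z}'$ satisfy the same side condition in the definition of $h^{\sim}$, and because $A$ and $B$ share no atoms the $A$-clause and the $B$-clause can never both apply, so $\overline{z}$ and $\overline{z}'$ fall under the same clause. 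If it is the ``otherwise'' clause both values are $1$; if it is the $A$-clause, then $\overline{z},\overline{z}'\sim\overline{w}c=\lrcn((A\to B)[p])$ for some occurrence of $p$ in $A$, and here I would invoke the immediately preceding polarity lemma together with the description of $f^{+}_{A}$ on realised sequences to conclude that the value is governed only by whether $\overline{w}$ is positive---a property invariant along the class---so the two agree; the $B$-clause is identical with $f^{-}_{B}$. The main obstacle, and the one place genuine care is required, is exactly this faithfulness step: one must be sure that the value $h^{\sim}$ attaches to an atom at a sequence $\overline{x}$ really is a function of the $\sim$-class of $\overline{x}$ alone, which is precisely where the $\sim$-invariance of polarity and the atom-disjointness of $A$ and $B$ (keeping the $A$- and $B$-clauses from competing) do the work, and where one must read the first two clauses as taking their $f^{\pm}$-value at a realised representative of the class rather than at an arbitrary member of it.
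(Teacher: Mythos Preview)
Your proposal is correct and follows the same two-step strategy as the paper: re-run the induction of Lemma~\ref{Lemma:h-definition} for the tracking property, then verify faithfulness at atoms using the $\sim$-invariance of polarity (the lemma that $\overline{x}nn\overline{y}$ is positive iff $\overline{xy}$ is). You are in fact more careful than the paper on the faithfulness step. The paper's proof simply asserts that $f^{+}_{A}(\overline{x},p)=f^{+}_{A}(\overline{y},p)$ whenever $\overline{x}\sim\overline{y}$ and $p$ occurs in $A$, but under the literal definition of $f^{+}_{A}$ this is false: with $A=p$ one has $f^{+}_{A}(c,p)=1$ while $f^{+}_{A}(nnc,p)=0$, since $nnc\neq\lrcn((p\to C)[p])$. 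Your closing remark---that the first two clauses of $h^{\sim}$ must be read as evaluating $f^{\pm}$ at a realised representative $\lrcn((A\to B)[p])$ of the $\sim$-class rather than at the given $\overline{x}$---is exactly the repair needed; it is well-defined because any two realised representatives in the same $\sim$-class share polarity, and it leaves the base case of the tracking induction intact since there $\overline{x}$ is itself realised.
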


\begin{proof}
    The same induction as in Lemma \ref{Lemma:h-definition} carries over to establish that the same holds for $h^{\sim}$. To show that $h^{\sim}$ is faithful, \emph{n.b.}, that if $p$ occurs in $A$ (respectively, $B$) and $\bar{x}\sim\bar{y}$, then $f^{+}_{A}(\bar{x},p)=f^{+}_{A}(\bar{y},p)$ (respectively, $f^{-}_{B}(\bar{x},p)=f^{-}_{B}(\bar{y},p)$). Otherwise, $h^{\sim}(\bar{x},p)=1=h^{\sim}(\bar{y},p)$ independently of choice of $\bar{x}$.
\end{proof}

\noindent Following the argument of Theorem \ref{LRCN_Vs} but noting that $h^{\sim}$ is faithful allows us to infer that:

\begin{theorem}
    $\mathbf{CLV}^{\sim}$ enjoys the variable sharing property.
\end{theorem}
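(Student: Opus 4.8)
The plan is to reprove variable sharing for $\mathbf{CLV}^{\sim}$ by running the proof of Theorem~\ref{LRCN_Vs} almost verbatim, with the \emph{faithful} assignment $h^{\sim}$ of Lemma~\ref{Lemma:h-sim-definition} taking over the role that $h$ played there. Essentially all of the real content has already been isolated: Lemma~\ref{Lemma:h-sim-definition} records both that $h^{\sim}$ still satisfies the defining equations of Lemma~\ref{Lemma:h-definition} (so that $h^{\sim}(\lrcn((A\to B)[C]),C)=f^{+}_{A}(\lrcn((A\to B)[C]),C)$ for subformulas $C$ of $A$, and symmetrically with $f^{-}_{B}$ for subformulas of $B$) and that $h^{\sim}$ is faithful; and the polarity lemma to the effect that $\overline{x}nn\overline{y}$ and $\overline{x}\overline{y}$ always have the same polarity is exactly what makes the passage to $\sim$-classes in the definition of $h^{\sim}$ compatible with the sign-tracking that $f^{+}_{A}$ and $f^{-}_{B}$ perform.

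First I would fix formulas $A,B$ that share no atom and form $h^{\sim}$ as in Lemma~\ref{Lemma:h-sim-definition}. Instantiating the equations of that lemma at $C:=A$ and at $C:=B$---and noting $\lrcn((A\to B)[A])=\lrcn((A\to B)[B])=c$, with $c=\varepsilon c$ and $\varepsilon$ positive---together with the two lemmas evaluating $f^{+}_{A}$ and $f^{-}_{B}$ on subformulas (applied with the subformula taken to be all of $A$, resp.\ all of $B$) gives $h^{\sim}(c,A)=f^{+}_{A}(c,A)=1$ and $h^{\sim}(c,B)=f^{-}_{B}(c,B)=0$. Since $\varepsilon$ is $c$-free, the conditional clause in the definition of an assignment then yields
\[
h^{\sim}(\varepsilon,A\to B)=\sup(1-h^{\sim}(c,A),h^{\sim}(c,B))=\sup(0,0)=0.
\]
Because $h^{\sim}$ is a faithful lericone-sensitive assignment, this exhibits a faithful countermodel to $A\to B$, and hence (using the identification $\mathbf{CLV}^{\sim}=\mathbf{CL}^{\LRCN^{\sim}}$ together with the fact, established just above, that $\mathbf{CLV}^{\sim}$ is precisely the set of formulas made true at $\langle\varepsilon,\cdot\rangle$ by every faithful assignment) $A\to B\notin\mathbf{CLV}^{\sim}$. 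Contraposing, if $A\to B\in\mathbf{CLV}^{\sim}$ then $A$ and $B$ share an atom.

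I do not expect a genuine obstacle: the heavy lifting was done in Lemma~\ref{Lemma:h-sim-definition} and the polarity lemma, and all that remains is to notice that a faithful assignment is a legitimate witness of failure of membership in $\mathbf{CLV}^{\sim}$. The one place to be slightly careful is the bookkeeping around $\sim$-equivalence in the definition of $h^{\sim}$---in particular that distinct occurrences of the same atom at $\sim$-equivalent but unequal lericone sequences cannot make its defining clauses conflict---but this is already folded into Lemma~\ref{Lemma:h-sim-definition}. If one wants the sharper statement (that the shared atom can be taken to occur under $\sim$-equivalent lericone sequences in $A\to B$), I would run the above argument on a faithful skeleton of $A\to B$, obtaining atom-disjointness there, and then pull the conclusion back through closure of $\mathbf{CLV}^{\sim}$ under faithful lericone substitutions, exactly as in the corollary to Theorem~\ref{LRCN_Vs}.
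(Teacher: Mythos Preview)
Your proposal is correct and is essentially the paper's own proof: the paper simply says to follow the argument of Theorem~\ref{LRCN_Vs} with $h^{\sim}$ in place of $h$, noting that $h^{\sim}$ is faithful, which is exactly what you do (only more explicitly). Your closing remark about strengthening to $\sim$-equivalent lericone sequences via a faithful skeleton also anticipates the paper's subsequent corollary.
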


\noindent This, moreover, allows us to infer that $\mathbf{CLV}^{\sim}$ has a slightly weaker variable sharing property of \emph{faithful} lericone relevance:

\begin{cor}
    If $A\rightarrow B\in\mathbf{CLV}^{\sim}$ then $A$ and $B$ share an atom $p$ with equivalent $\LRN$ sequences in both $A$ and $B$.
\end{cor}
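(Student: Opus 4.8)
The plan is to transcribe the proof of the variable sharing corollary for $\CLV$ into the faithful setting, replacing the ordinary atomic injective lericone substitution used there with a \emph{faithful} one. I would argue by contraposition. Suppose $A$ and $B$ share no atom occurring under $\sim$-equivalent $\LRN$-sequences in $A\to B$; equivalently, no atom $p$ has an occurrence $A[p]$ in $A$ and an occurrence $B[p]$ in $B$ with $\lrcn(A[p]\to B)\sim\lrcn(A\to B[p])$. Note that all lericone sequences of atom-occurrences in a conditional $A\to B$ are $c$-ended. I would then produce a faithful lericone substitution $\iota$ for which $\iota(\varepsilon,A\to B)=\iota(c,A)\to\iota(c,B)$ has antecedent and consequent sharing no atom. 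Granting this, the variable sharing theorem for $\mathbf{CLV}^{\sim}$ gives $\iota(c,A)\to\iota(c,B)\notin\mathbf{CLV}^{\sim}$; since $\mathbf{CLV}^{\sim}$ is closed under faithful lericone substitutions and $\iota$ is faithful, $A\to B\notin\mathbf{CLV}^{\sim}$, and contraposing yields the corollary. The strengthening to \emph{equivalent} $\LRN$ sequences falls out of the same argument, just as in the $\CLV$ case.

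The real content is the construction of $\iota$: a faithful atomic lericone substitution that, on $c$-ended sequences, is injective modulo $\sim$, i.e. $\iota(\overline{w}c,p)=\iota(\overline{w}'c,q)$ only when $p=q$ and $\overline{w}\sim\overline{w}'$. One first observes that the rewriting rule $nn\to\varepsilon$ on $\LRCN$ is terminating and has a single overlap ($nnn$, which reduces to $n$ either way), hence every sequence has a unique $\sim$-normal form. The naive idea---Gödel-code each sequence by its $\sim$-normal form, as in the definition of $g$---does \emph{not} produce a faithful substitution, and this is the main obstacle: because $\varepsilon\sim nn$ while the conditional clause sends $\varepsilon$ to $c$ but a non-empty $\overline{x}$ to $l\overline{x}$ or $r\overline{x}$, faithfulness in fact forces $\iota(\overline{w}c,p)=\iota(\overline{w}l,p)=\iota(\overline{w}r,p)$ for every $\overline{w}\in\LRN$. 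The fix is to define $\iota(\overline{x},p_i)=p_{2^i g(\Phi(\overline{x}))}$ where $\Phi(\overline{x})$ is the $\sim$-normal form of $\overline{x}$ with its terminal symbol replaced by $c$ whenever that symbol lies in $\{l,r,c\}$; a short induction mirroring the proof of Lemma~\ref{transform_lemma} then checks that this $\iota$ is genuinely faithful, and one verifies directly that on $c$-ended sequences $\iota$ identifies $\overline{w}c$ and $\overline{w}'c$ exactly when $\overline{w}\sim\overline{w}'$.

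With $\iota$ fixed, the atom-disjointness step is routine. A straightforward induction (in the spirit of Lemma~\ref{Lemma:Bullet}) shows that the atoms appearing in $\iota(c,A)$ are precisely $\{\iota(\lrcn(A[p]\to B),p):p\text{ occurs in }A\}$ and those in $\iota(c,B)$ precisely $\{\iota(\lrcn(A\to B[p]),p):p\text{ occurs in }B\}$. If these sets shared an atom, the injectivity of $\iota$ on $c$-ended sequences would force an atom to occur in both $A$ and $B$ under $\sim$-equivalent lericone sequences, contrary to our standing assumption. Hence $\iota(c,A)$ and $\iota(c,B)$ are atom-disjoint, completing the plan; the only step demanding genuine care is the terminal-symbol correction in the construction of $\iota$.
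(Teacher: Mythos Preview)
Your argument is correct and follows the same route as the paper's (implicit) proof: contrapose, push $A\to B$ through a faithful atomic lericone substitution that is injective modulo $\sim$, invoke variable sharing for $\mathbf{CLV}^{\sim}$ on the resulting atom-disjoint conditional, and use closure under faithful substitutions to pull the conclusion back. You correctly observe along the way that no faithful substitution can be literally injective on $\LRCN\times\At$, so the paper's phrase ``injective atomic faithful'' must be read modulo $\sim$.

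Your terminal-symbol correction, though, does more work than the corollary requires. The identifications $\iota(\overline{w}c,p)=\iota(\overline{w}l,p)=\iota(\overline{w}r,p)$ are forced only if one demands that the \emph{extension} of $\iota$ to all of $\LL$ satisfy $\iota(\varepsilon,A)=\iota(nn,A)$---the $\varepsilon$-versus-nonempty split in the conditional clause is the culprit. But the closure of $\mathbf{CLV}^{\sim}$ under $\iota$ needs only that $f\bullet\iota$ be a faithful assignment whenever $f$ is; when $\iota$ is atomic and faithful \emph{on atoms}, $\iota(\overline{x},p)$ is itself an atom, so faithfulness of $f$ on atoms already yields $(f\bullet\iota)(\overline{x},p)=f(\overline{x},\iota(\overline{x},p))=f(\overline{y},\iota(\overline{y},p))=(f\bullet\iota)(\overline{y},p)$ for $\overline{x}\sim\overline{y}$. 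Hence the naive construction $\iota(\overline{x},p_i)=p_{2^{i}g(\rho(\overline{x}))}$ already suffices: it is atomic, faithful on atoms by construction, and injective modulo $\sim$ on $c$-ended sequences, which is all your atom-disjointness step uses. Your fix is sound but superfluous.
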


\section{Tableaux for Classical Lericone Consequence}\label{Section:Tableaux}

We take the preceding to demonstrate that $\CLV$ is an interesting system that deserves the attention of logicians, particularly relevant logicians. But as it stands, we have not provided anything resembling proof theory for $\CLV$. We conclude the paper by providing a tableau system for $\vDash_{\LRCN}$ and providing the usual metatheory for it. 

\subsection{Tableaux for $\CLV$}

Where $X\cup\{A\}$ is a set of formulas we call an expression of the form $X\Yright A$ a \textit{sequent}. Given a sequent $X\Yright A$, its initial tableau has a single branch containing, for each $B\in X$, a triple of the form $\langle \varepsilon,1,B\rangle$ and also containing the triple $\langle \varepsilon,0,A\rangle$. A tableau can be extended according to the following rules:

\begin{multicols}{2}\begin{description}
    \item[Positive Conjunction Rule] 
    \begin{displaymath}
        \xymatrix@C=-10mm@R=3mm{\langle\overline{x},1,B_1\land B_2\rangle\ar[d] \\
        \txt{$\langle\overline{x},1,B_1\rangle$ \\ $\langle\overline{x},1,B_2\rangle$}}
    \end{displaymath}
    \item[Negative Conjunction Rule]
    \begin{displaymath}
        \xymatrix@C=-10mm@R=3mm{ & \langle\overline{x},0,B_1\land B_2\rangle\ar[dr]\ar[dl] & \\
        \langle\overline{x},0,B_1\rangle & & \langle\overline{x},0,B_2\rangle}
    \end{displaymath}
    \item[Positive Disjunction Rule]
    \begin{displaymath}
        \xymatrix@C=-10mm@R=3mm{ & \langle\overline{x},1,B_1\lor B_2\rangle\ar[dr]\ar[dl] & \\
        \langle\overline{x},1,B_1\rangle & & \langle\overline{x},1,B_2\rangle}
    \end{displaymath}
    \item[Negative Disjunction Rule]
    \begin{displaymath}
        \xymatrix@C=-10mm@R=3mm{\langle\overline{x},0,B_1\lor B_2\rangle\ar[d] \\
        \txt{$\langle\overline{x},1,B_1\rangle$ \\ $\langle\overline{x},1,B_2\rangle$} }
    \end{displaymath}
    \item[Positive Negation Rule] 
    \begin{displaymath}
        \xymatrix@C=-10mm@R=3mm{\langle\overline{x},1,\neg B\rangle\ar[d] \\
        \langle n\overline{x},0,B\rangle}
    \end{displaymath}
    \item[Negative Negation Rule] 
    \begin{displaymath}
        \xymatrix@C=-10mm@R=3mm{\langle\overline{x},0,\neg B\rangle\ar[d] \\
        \langle n\overline{x},1,B\rangle}
    \end{displaymath}
    \item[Positive Conditional Rule, $\varepsilon$ case]
    \begin{displaymath}
        \xymatrix@C=-10mm@R=3mm{& \langle\varepsilon,1,B_1\to B_2\rangle\ar[dr]\ar[dl] & \\
        \langle c,0,B_1\rangle & &\langle c,1,B_2\rangle}
    \end{displaymath}
    \item[Positive Conditional Rule, $\overline{x}\neq\varepsilon$ case]
    \begin{displaymath}
        \xymatrix@C=-10mm@R=3mm{& \langle\overline{x},1,B_1\to B_2\rangle\ar[dr]\ar[dl] & \\
        \langle l\overline{x},0,B_1\rangle & &\langle r\overline{x},1,B_2\rangle}
    \end{displaymath}
    \item[Negative Conditional Rule, $\varepsilon$ case]
    \begin{displaymath}
        \xymatrix@C=-10mm@R=3mm{\langle\varepsilon,0,B_1\to B_2\rangle\ar[d] \\
        \txt{$\langle c,1,B_1\rangle$ \\ $\langle c,0,B_2\rangle$}}
    \end{displaymath}
    \item[Negative Conditional Rule, $\overline{x}\neq\varepsilon$ case]
    \begin{displaymath}
        \xymatrix@C=-10mm@R=3mm{\langle\overline{x},0,B_1\to B_2\rangle\ar[d] \\
        \txt{$\langle l\overline{x},1,B_1\rangle$ \\ $\langle r\overline{x},0,B_2\rangle$}}
    \end{displaymath}
\end{description}\end{multicols}

A branch on a tableau \textit{closes} when there is a lericone sequence $\overline{x}$ and a formula $A$ so that both $\langle \overline{x},0,A\rangle$ and $\langle\overline{x},1,A\rangle$ occur somewhere in the tableau. Otherwise, the branch is \textit{open}. A tableau is closed when all of its branches are closed. We say that $X\vdash A$ when the initial tableau for $X\Yright A$ eventually closes.

We will now turn to showing that $X\vdash A$ iff $X\vDash_{\LRCN} A$. To that end, we first introduce some definitions.

\begin{mdef}
    A \emph{set of triples} is a set all of whose members have the form $\langle\overline{x},i,A\rangle$ with $\overline{x}\in\LRCN$, $i\in\{0,1\}$, and $A\in\LL$.
\end{mdef}

\begin{mdef}
    A lericone assignment $f$ \emph{conforms} to the set of triples $S$ just if $\langle\overline{x},i,A\rangle\in S$ only if $f(\overline{x},A)=i$.
\end{mdef}

\begin{mdef}
    Let $S$ be a set of triples. We define the $\langle\overline{x},i,A\rangle$-extensions of $S$ as follows: if $A\in\At$ or $\langle\overline{x},i,A\rangle\not\in S$, then $S$ is the only $\langle\overline{x},i,A\rangle$-extension of $S$. Otherwise,
    \begin{center}\begin{tabular}{|c|c|}\hline
        $\langle i,A\rangle$  &   $\langle\overline{x},i,A\rangle$-extensions of $S$ \\ \hline \hline
        $\langle 1,B_1\land B_2\rangle$  &  $S\cup\{\langle\overline{x},1,B_1\rangle,\langle\overline{x},1,B_2\rangle\}$ \\ 
        $\langle 0,B_1\land B_2\rangle$  &  $S\cup\{\langle\overline{x},0,B_1\rangle\}$ and $S\cup\{\langle\overline{x},0,B_2\rangle\}$ \\ \hline
        $\langle 1,B_1\lor B_2\rangle$  &  $S\cup\{\langle\overline{x},1,B_1\rangle\}$ and $S\cup\{\langle\overline{x},1,B_2\rangle\}$ \\ 
        $\langle 0,B_1\lor B_2\rangle$  &  $S\cup\{\langle\overline{x},0,B_1\rangle,\langle\overline{x},0,B_2\rangle\}$ \\ \hline
        $\langle 1,\neg B\rangle$  &  $S\cup\{\langle\overline{x},0,B\rangle\}$ \\
        $\langle 0,\neg B\rangle$  &  $S\cup\{\langle\overline{x},1,B\rangle\}$ \\ \hline
        & if $\overline{x}=\varepsilon$ \\
        $\langle 1,B_1\to B_2\rangle$  &  $S\cup\{\langle {c},0,B_1\rangle\}$ and $S\cup\{\langle {c},1,B_2\rangle\}$ \\ 
        $\langle 0,B_1\to B_2\rangle$  &  $S\cup\{\langle {c},1,B_1\rangle,\langle {c},0,B_2\rangle\}$ \\ \hline
        & if $\overline{x}\neq\varepsilon$ \\
        $\langle 1,B_1\to B_2\rangle$  &  $S\cup\{\langle l{\overline{x}},0,B_1\rangle\}$ and $S\cup\{\langle r{\overline{x}},1,B_2\rangle\}$ \\ 
        $\langle 0,B_1\to B_2\rangle$  &  $S\cup\{\langle l{\overline{x}},1,B_1\rangle,\langle r{\overline{x}},0,B_2\rangle\}$ \\ \hline
    \end{tabular}\end{center}
\end{mdef}

\begin{lemma}
    Let $S$ be a set of triples and $\langle\overline{x},i,A\rangle\in S$. Then $f$ conforms to $S$ iff $f$ conforms to at least one $\langle\overline{x},i,A\rangle$-extension of $S$.
\end{lemma}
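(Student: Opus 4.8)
The plan is to prove the biconditional by a straightforward case analysis on the principal connective of $A$ and the value of $i\in\{0,1\}$, reading off each line directly from the recursive clauses defining an assignment. The right-to-left direction is essentially immediate: every $\langle\overline{x},i,A\rangle$-extension $S'$ of $S$ satisfies $S\subseteq S'$, and conformance is monotone under reverse inclusion—if $\langle\overline{y},j,B\rangle\in S$ then $\langle\overline{y},j,B\rangle\in S'$, so $f(\overline{y},B)=j$—hence if $f$ conforms to at least one extension it conforms to $S$. When $A$ is atomic (which is the only other way $S$ can be its own unique extension here, since $\langle\overline{x},i,A\rangle\in S$ by hypothesis), both directions are trivial.

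For the left-to-right direction, assume $f$ conforms to $S$; since $\langle\overline{x},i,A\rangle\in S$ we have $f(\overline{x},A)=i$, and I would then run through the table. If $A=B_1\land B_2$ and $i=1$, then $\inf(f(\overline{x},B_1),f(\overline{x},B_2))=1$ forces $f(\overline{x},B_j)=1$ for both $j$, so $f$ conforms to the unique extension $S\cup\{\langle\overline{x},1,B_1\rangle,\langle\overline{x},1,B_2\rangle\}$; if $i=0$ the infimum being $0$ forces $f(\overline{x},B_j)=0$ for some $j$, so $f$ conforms to that $S\cup\{\langle\overline{x},0,B_j\rangle\}$, one of the two listed extensions. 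The disjunction cases are dual, swapping $\inf$ for $\sup$ and the roles of $0$ and $1$. For negation, $f(\overline{x},\neg B)=1-f(n\overline{x},B)$, so $i=1$ gives $f(n\overline{x},B)=0$ and $i=0$ gives $f(n\overline{x},B)=1$, matching the single extension in each row with the prefix $n\overline{x}$. For the conditional I would split on whether $\overline{x}=\varepsilon$: in the $\varepsilon$ case $f(\varepsilon,B_1\to B_2)=\sup(1-f(c,B_1),f(c,B_2))$, so $i=1$ forces $f(c,B_1)=0$ or $f(c,B_2)=1$ (one of the two extensions) while $i=0$ forces both $f(c,B_1)=1$ and $f(c,B_2)=0$ (the single extension); the $\overline{x}\neq\varepsilon$ case is identical with $l\overline{x}$, $r\overline{x}$ replacing $c$. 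In each case the added triples are exactly those recorded in the definition of the extensions, and the prefix operations ($n\overline{x}$, $l\overline{x}$, $r\overline{x}$, $c$) mirror the corresponding semantic clause exactly.

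I do not expect a genuine obstacle: this is a bookkeeping argument pairing each row of the extension table with the matching clause in the definition of an assignment. The only points meriting care are (i) keeping the $\varepsilon$-versus-nonempty split for $\to$ aligned between the extension table and the semantic clause, so that the $c$-prefixes land correctly, and (ii) observing that for the branching (disjunctive) extensions we only need $f$ to conform to \emph{one} of them, which is precisely what ``the relevant $\inf$ is $0$'' or ``the relevant $\sup$ is $1$'' supplies.
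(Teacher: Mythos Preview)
Your proposal is correct and follows essentially the same approach as the paper: a case analysis on the form of $\langle\overline{x},i,A\rangle$, reading off each step from the recursive semantic clauses. The paper only works through two of the conditional cases explicitly and leaves the rest to the reader, whereas you spell out all connectives and also make the right-to-left direction explicit via the observation that every extension contains $S$, so conformance passes down; this is the intended argument.
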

\begin{proof}
    By examining cases. The atomic case is immediate. We examine two of the four conditional cases and leave all other cases to the reader.

    Suppose $\langle\overline{x},i,A\rangle=\langle\varepsilon,0,B_1\to B_2\rangle$. Since $f$ conforms to $S$, $f(\varepsilon,B_1\to B_2)=0$. Thus $\sup(1-f(c,B_1),f(c,B_2))=0$. It follows that $f(c,B_1)=1$ and $f(c,B_2)=0$. So $f$ conforms to $S\cup\{\langle c,1,B_1\rangle,\langle c,0,B_2\rangle\}$ as required.

    Suppose $\overline{x}\neq\varepsilon$ and $\langle\overline{x},i,A\rangle=\langle\overline{x},1,B_1\to B_2\rangle$. Since $f$ conforms to $S$, $f(\overline{x},B_1\to B_2)=1$. Thus $\sup(1-f(l\overline{x},B_1),f(r\overline{x},B_2))=1$. So either $f(l\overline{x},B_1)=0$ or $f(r\overline{x},B_2)=1$. Thus $f$ either conforms to $S\cup\{\langle l\overline{x},0,B_1\rangle\}$ or conforms to $S\cup\{\langle r\overline{x},1,B_1\rangle\}$, as required.
\end{proof}

\begin{theorem}[Soundness]\label{theorem:soundness-LRCN}
    If $X\vdash A$, then $X\vDash_{\LRCN} A$.
\end{theorem}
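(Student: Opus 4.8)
The plan is to prove the contrapositive: assuming $X \not\vDash_{\LRCN} A$, I will show $X \not\vdash A$, i.e.\ that no tableau for $X \Yright A$ closes. Since $X \not\vDash_{\LRCN} A$, fix an assignment $f$ with $f(\varepsilon, B) = 1$ for every $B \in X$ and $f(\varepsilon, A) = 0$. Identifying a branch of a tableau with the set of triples occurring on it, the first observation is that $f$ conforms to the single branch $S_0$ of the initial tableau for $X \Yright A$, since $S_0$ consists exactly of the triples $\langle \varepsilon, 1, B\rangle$ for $B \in X$ together with $\langle \varepsilon, 0, A\rangle$.

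Next I would record the key consequence of conformity: a branch $S$ to which $f$ conforms cannot be closed, for if it contained both $\langle \overline{x}, 0, C\rangle$ and $\langle \overline{x}, 1, C\rangle$, conformity would force $f(\overline{x}, C) = 0$ and $f(\overline{x}, C) = 1$. Hence it suffices to show that every tableau obtained from the initial one by finitely many rule applications contains a branch to which $f$ conforms; such a branch is open, so the tableau is not closed, and therefore $X \not\vdash A$.

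I would prove this by induction on the number of rule applications. The base case is the observation about $S_0$. For the inductive step, suppose the current tableau has a branch $S$ with $f$ conforming to $S$, and the next rule application acts on a triple $\langle \overline{x}, i, C\rangle$ on some branch $S'$. If $S' \neq S$, or the rule does not apply to $S$, then $S$ survives unchanged in the new tableau and we are done. If $S' = S$, then inspection of the ten tableau rules shows that the branches replacing $S$ are precisely the $\langle \overline{x}, i, C\rangle$-extensions of $S$ — reading each rule's conclusion as the set of triples added to the branch, with the branching rules producing the two-element families of extensions, and matching the $\varepsilon$ versus $\overline{x} \neq \varepsilon$ split for conditionals and the $n\overline{x}$ shift for negations against the extension table. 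Since $f$ conforms to $S$ and $\langle \overline{x}, i, C\rangle \in S$, the Lemma on extensions gives that $f$ conforms to at least one such extension, which is a branch of the new tableau; this completes the induction.

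Since every tableau built from $X \Yright A$ thus has an open branch, none of them closes, so $X \not\vdash A$, and contraposing yields the theorem. The only delicate point — and the step I would be most careful about — is the bookkeeping in the inductive step: verifying that the displayed tableau rules correspond exactly, case by case, to the formal $\langle \overline{x}, i, C\rangle$-extension operation, and that a branch to which $f$ conforms is genuinely inherited when a rule is applied elsewhere in the tree rather than being disturbed. Given the Lemma, everything else is routine.
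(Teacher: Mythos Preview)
Your proposal is correct and takes essentially the same approach as the paper: both use the preceding lemma to show that an assignment conforming to the initial tableau must conform to some branch of every extension, and then observe that no assignment can conform to a closed branch. The only cosmetic difference is that you frame it as the contrapositive (from $X\not\vDash_{\LRCN}A$ to $X\not\vdash A$) and make the induction on rule applications explicit, whereas the paper argues directly that any $f$ making $X$ true must make $A$ true; the logical content is identical.
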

\begin{proof}
    Suppose $X\vdash A$. Let $f(\varepsilon,B)=1$ for all $B\in X$. If $f(\varepsilon,A)=0$, then $f$ conforms to the initial tableau for $X\Yright A$. So by the preceding lemma, in all extensions of the initial tableau for $X\Yright A$, $f$ conforms to the set of formulas on at least one branch. But since $X\vdash A$, all branches eventually contain both $\langle\overline{x},1,C\rangle$ and $\langle\overline{x},0,C\rangle$, which no assignment can conform to. So $f(\varepsilon,A)\neq 0$, as required.
\end{proof}

For completeness, we need two further definitions:
\begin{mdef}
    A set of triples is \emph{saturated} when $\langle\overline{x},i,A\rangle\in S$ only if $S$ contains one of its $\langle\overline{x},i,A\rangle$-extensions.
\end{mdef}
\begin{mdef}
     A tableau is \emph{complete} just if for each of its branches, the set of triples on that branch is saturated. 
\end{mdef}

Notice that if $X\not\vdash A$, then there will be an open branch on each completed tableau for $X\Yright A$.

\begin{theorem}[Completeness]
    If $X\not\vdash A$, then $X\not\vDash_{\LRCN} A$.
\end{theorem}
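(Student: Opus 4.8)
The plan is to run the standard Hintikka-style argument: extract a countermodel from an open branch of a completed tableau. Since $X\not\vdash A$, the initial tableau for $X\Yright A$ never closes, so — developing it fairly and, in the infinite case, invoking K\"onig's lemma to pull out a branch — there is a completed tableau on which some branch $\mathcal{B}$ is open (this is exactly the observation flagged just before the theorem). Let $S$ be the set of triples occurring on $\mathcal{B}$. Because $\mathcal{B}$ is open, $S$ contains no complementary pair $\langle\overline{x},0,C\rangle,\langle\overline{x},1,C\rangle$; because the tableau is complete, $S$ is saturated in the sense defined above.

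Next I would define an assignment $f$ by reading off the atomic triples: for $p\in\At$ and $\overline{x}\in\LRCN$, set $f(\overline{x},p)=i$ if $\langle\overline{x},i,p\rangle\in S$, and $f(\overline{x},p)=1$ otherwise. This is well-defined precisely because $S$ has no complementary atomic pair. The core of the argument is then a truth (Hintikka) lemma: for every triple $\langle\overline{x},i,B\rangle\in S$ we have $f(\overline{x},B)=i$, i.e.\ $f$ conforms to $S$. I would prove this by induction on the complexity of $B$. The base case is immediate from the definition of $f$. For the inductive step, saturation guarantees that $S$ already contains one of the $\langle\overline{x},i,B\rangle$-extensions of itself, and every triple added in forming such an extension concerns a proper subformula of $B$ (with a possibly longer lericone sequence), so the induction hypothesis applies to those triples; one then computes $f(\overline{x},B)$ directly from the semantic clause for the main connective of $B$. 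For instance, from $\langle\overline{x},1,B_1\land B_2\rangle\in S$ one gets $\langle\overline{x},1,B_1\rangle,\langle\overline{x},1,B_2\rangle\in S$, hence $f(\overline{x},B_i)=1$ by IH, hence $f(\overline{x},B_1\land B_2)=\inf(1,1)=1$; the conditional cases split on whether $\overline{x}=\varepsilon$, mirroring exactly the two-case semantic clause and the corresponding pair of tableau rules, and the negation cases track the $n$-prefix.

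With the truth lemma in hand the theorem follows at once: the initial triples $\langle\varepsilon,1,B\rangle$ for $B\in X$ and $\langle\varepsilon,0,A\rangle$ all lie on $\mathcal{B}$, so $f(\varepsilon,B)=1$ for every $B\in X$ while $f(\varepsilon,A)=0$; thus $f\not\vDash X\Yright A$, and hence $X\not\vDash_{\LRCN}A$. I expect the delicate points to be bookkeeping rather than conceptual: first, the existence of a completed tableau with an open branch needs the fair-development/K\"onig's-lemma remark when $X$ is infinite or some branch fails to terminate; and second, the truth-lemma induction has one short computation per connective and polarity, where the place that most needs care is the conditional clauses — keeping the $\varepsilon$ versus $\overline{x}\neq\varepsilon$ distinction straight, together with the attendant appearances of the marker $c$, so that the semantic clause, the extension table, and the tableau rule all line up.
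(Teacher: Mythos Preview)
Your proposal is correct and follows essentially the same approach as the paper: pick an open branch $\mathcal{B}$ on a completed tableau, read off an assignment from the atomic triples on $\mathcal{B}$, and prove by induction on formula complexity that $f$ conforms to every triple on the branch, whence the initial triples yield the countermodel. The paper's proof is terser (it defaults unassigned atoms to $0$ rather than $1$ and leaves the K\"onig's-lemma point implicit in the remark preceding the theorem), but the structure is identical.
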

\begin{proof}
    Choose a completed tableau for $X\Yright A$ and an open branch $\mathcal{B}$ on it. Let $f_{\mathcal{B}}(\overline{x},p)=1$ iff $\langle\overline{x},1,p\rangle\in \mathcal{B}$. By induction on $C$, one sees that if $\langle\overline{x},i,C\rangle\in B$, then $f_{\mathcal{B}}(\overline{x},C)=i$. So since $\{\langle\varepsilon,1,D\rangle\mid D\in X\}\subseteq \mathcal{B}$ and $\langle\varepsilon,0,A\rangle\in \mathcal{B}$, $f\not\vDash X\Yright A$. Thus $X\not\vDash_{\LRCN} A$.
\end{proof}

\subsection{Tableaux for Faithful $\CLV$}

Let us return to the semantically-defined consequence relation of $\mathbf{CLV}^{\sim}$ to provide a tableau calculus for this system as well.

We can provide a tableau calculus for the faithful version of $\CLV$ by adding to the foregoing calculus the following rule:

\vspace{.2cm}

\begin{center}
    \begin{description}

\item[Faithfulness Rule] 
    \begin{displaymath}
        \xymatrix@C=-10mm@R=3mm{\langle \bar{x}nn\bar{y},i,B\rangle\ar[d] \\
        \langle \bar{x}\bar{y},i,B\rangle}
    \end{displaymath}

    \end{description}
\end{center}

\vspace{.2cm}

We say that $X\vdash^{\sim}A$ holds when the initial tableau for $X\Yright A$ closes for tableaux including the Faithfulness Rule.

We could also offer a an alternative method of defining the tableau calculus by stating that a branch in the calculus omitting the Faithfulness Rule \emph{faithfully closes} when there are lericone sequences $\bar{x}\sim\bar{y}$ such that both $\langle\bar{x},0,A\rangle$ and $\langle\bar{y},1,A\rangle$ appear on the branch and say that a tableau omitting the Faithfulness Rule faithfully closes if every branch faithfully closes. Then we could say that $X\vdash^{\sim}_{2}A$ holds when every tableau omitting the Faithfulness Rule faithfully closes.

\begin{mdef}
    Let $\bar{x}$ be a lericone sequence and define its \emph{faithful reduct} $\rho(\bar{x})$ recursively by saying that $\rho(\overline{x})=\overline{x}$ if $\overline{x}$ contains no occurrences of $nn$ and $\rho(\overline{x}nn\overline{y})=\rho(\overline{xy})$ otherwise.
\end{mdef}

\noindent The following is immediate:

\begin{lemma}\label{lemma:reduct-identity}
    $\bar{x}\sim\bar{y}$ iff $\rho(\bar{x})=\rho(\bar{y})$
\end{lemma}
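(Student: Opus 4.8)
The plan is to reduce the lemma to a single clean observation about the string-rewriting system that deletes the substring $nn$ from a lericone sequence.

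First I would record the normal-form description of $\rho$. Since the rewrite rule ``delete an occurrence of $nn$'' strictly shortens its argument, every rewrite sequence terminates; and since the only way two occurrences of $nn$ in a word can overlap is inside a maximal block of consecutive $n$'s --- where both deletions produce the same word --- the system is locally confluent, so by Newman's lemma it is confluent. Hence every lericone sequence $\overline{x}$ has a unique $nn$-free descendant, and that descendant is exactly $\rho(\overline{x})$, independently of which occurrence of $nn$ one elects to cancel at each stage. (Equivalently, and more concretely, $\rho(\overline{x})$ is obtained from $\overline{x}$ by replacing each maximal block $n^k$ of consecutive $n$'s by $n^{k\bmod 2}$ while leaving all $l$, $r$, and $c$ letters in place; one may simply take this as the definition of $\rho$ and bypass the confluence argument.) In particular $\rho$ is well-defined, $\rho(\overline{x})$ contains no occurrence of $nn$, and the equation $\rho(\overline{a}nn\overline{b})=\rho(\overline{a}\overline{b})$ holds for every decomposition of the word, not merely the leftmost one picked out by the recursive clause.

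For the right-to-left direction, I would first show $\overline{x}\sim\rho(\overline{x})$ for all $\overline{x}$, by induction on the length of $\overline{x}$: if $\overline{x}$ is $nn$-free this is reflexivity of $\sim$, and otherwise, writing $\overline{x}=\overline{a}nn\overline{b}$, we have $\overline{x}\mathrel{\sim'}\overline{a}\overline{b}$, hence $\overline{x}\sim\overline{a}\overline{b}$, while $\overline{a}\overline{b}$ is strictly shorter, so the induction hypothesis gives $\overline{a}\overline{b}\sim\rho(\overline{a}\overline{b})=\rho(\overline{x})$; transitivity of $\sim$ closes the step. Then if $\rho(\overline{x})=\rho(\overline{y})$ we obtain $\overline{x}\sim\rho(\overline{x})=\rho(\overline{y})\sim\overline{y}$, so $\overline{x}\sim\overline{y}$ by symmetry and transitivity of $\sim$. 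For the left-to-right direction, since $\sim$ is by definition the least equivalence relation containing $\sim'$, it suffices to check that the relation $R=\{\langle\overline{x},\overline{y}\rangle : \rho(\overline{x})=\rho(\overline{y})\}$ is an equivalence relation containing $\sim'$; then $\sim\,\subseteq R$, which is the desired implication. Reflexivity, symmetry, and transitivity of $R$ are immediate, and $\sim'\,\subseteq R$ is precisely the equation $\rho(\overline{a}nn\overline{b})=\rho(\overline{a}\overline{b})$ established in the first paragraph.

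The only genuine content --- and the step I expect to be the main obstacle, though it remains routine --- is the well-definedness point of the first paragraph: one needs to know that $\rho$ does not depend on the order in which $nn$'s are cancelled, for otherwise the condition $\rho(\overline{x})=\rho(\overline{y})$ would not even be well-posed and $\sim'$ would not visibly be $\rho$-invariant. Passing to the parity-of-$n$-blocks reformulation is the fastest way to settle this, and is presumably what is meant by calling the lemma ``immediate''.
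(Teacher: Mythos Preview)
Your proof is correct and considerably more careful than the paper's, which offers no argument at all and simply labels the lemma ``immediate.'' In particular, you are right to flag the well-definedness of $\rho$: the recursive clause $\rho(\overline{x}nn\overline{y})=\rho(\overline{xy})$ does not specify which occurrence of $nn$ to cancel when several are present, and without the confluence (or parity-of-blocks) observation the right-hand side of the lemma would not even be well-posed. Your two-direction argument---establishing $\overline{x}\sim\rho(\overline{x})$ by induction on length for one direction, and for the other observing that $\{\langle\overline{x},\overline{y}\rangle:\rho(\overline{x})=\rho(\overline{y})\}$ is an equivalence relation containing $\sim'$ and hence containing $\sim$---is the standard and cleanest way to identify the equivalence generated by a rewrite rule with equality of normal forms. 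There is nothing in the paper to compare it against; what you have written is exactly the argument that justifies the word ``immediate.''
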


\noindent In support of proving soundness, we will establish two lemmas.

\begin{lemma}\label{lemma:equivalence-faithful}
    $X\vdash^{\sim}A$ iff $X\vdash^{\sim}_{2}A$
\end{lemma}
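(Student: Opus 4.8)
The plan is to relate closed tableaux of the two calculi by tracking lericone sequences modulo $\sim$, with Lemma~\ref{lemma:reduct-identity} handling the bookkeeping. For the direction $X\vdash^{\sim}_{2}A\Rightarrow X\vdash^{\sim}A$: let $\mathcal{T}$ be a completed tableau for $X\Yright A$ built from the standard rules alone; by hypothesis $\mathcal{T}$ faithfully closes, so each branch $\mathcal{B}$ carries a pair $\langle\bar{x}_{\mathcal{B}},0,C_{\mathcal{B}}\rangle$, $\langle\bar{y}_{\mathcal{B}},1,C_{\mathcal{B}}\rangle$ with $\bar{x}_{\mathcal{B}}\sim\bar{y}_{\mathcal{B}}$. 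Since the standard rules are among the rules of the calculus with the Faithfulness Rule, $\mathcal{T}$ is also a (not-yet-closed) tableau there. First I note that iterating the Faithfulness Rule on a triple $\langle\bar{z},i,B\rangle$ --- each step deleting one occurrence of $nn$ --- eventually yields $\langle\rho(\bar{z}),i,B\rangle$, the $nn$-deletion rewrite being terminating and confluent with normal form $\rho(\bar{z})$. Applying this to both members of each closing pair and invoking Lemma~\ref{lemma:reduct-identity} (so $\rho(\bar{x}_{\mathcal{B}})=\rho(\bar{y}_{\mathcal{B}})$) extends $\mathcal{T}$ to a tableau in which every branch contains $\langle\rho(\bar{x}_{\mathcal{B}}),0,C_{\mathcal{B}}\rangle$ and $\langle\rho(\bar{x}_{\mathcal{B}}),1,C_{\mathcal{B}}\rangle$ and hence closes in the ordinary sense; thus $X\vdash^{\sim}A$.

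For the converse $X\vdash^{\sim}A\Rightarrow X\vdash^{\sim}_{2}A$ I would eliminate uses of the Faithfulness Rule in favour of the standard rules. The driving observation is that $\sim$ is a congruence for the sequence operations at work --- $\bar{z}\sim\bar{z}'$ implies $n\bar{z}\sim n\bar{z}'$, $l\bar{z}\sim l\bar{z}'$, $r\bar{z}\sim r\bar{z}'$, and $\bar{z}c\sim\bar{z}'c$, each read off the generators $\langle\bar{u}nn\bar{v},\bar{u}\bar{v}\rangle$ --- so that applying a standard decomposition rule to a premise is compatible with replacing its lericone sequence by a $\sim$-equivalent one: the children produced are the $\sim$-variants of the originals. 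Given a closed tableau $\mathcal{T}_{1}$ for $X\Yright A$ with the Faithfulness Rule, I would build, by induction on its construction, a completed standard tableau $\mathcal{T}_{2}$ for $X\Yright A$ such that along each branch every triple $\langle\bar{z},i,B\rangle$ of $\mathcal{T}_{1}$ is matched by a triple $\langle\bar{z}',i,B\rangle$ of $\mathcal{T}_{2}$ with $\bar{z}'\sim\bar{z}$; a Faithfulness step of $\mathcal{T}_{1}$ is simply absorbed, its premise and conclusion already carrying $\sim$-equivalent sequences. A branch of $\mathcal{T}_{1}$ closing on $\langle\bar{x},0,C\rangle$, $\langle\bar{x},1,C\rangle$ is then matched by $\langle\bar{x}',0,C\rangle$, $\langle\bar{x}'',1,C\rangle$ on the corresponding branch of $\mathcal{T}_{2}$ with $\bar{x}'\sim\bar{x}\sim\bar{x}''$ --- that is, a faithful closure; and since the existence of one faithfully-closed standard tableau for a sequent forces every completed standard tableau for it to faithfully close (the usual confluence argument for saturated tableaux), $X\vdash^{\sim}_{2}A$ follows.

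The main obstacle lies in this second direction, at the conditional rules. The standard conditional rule splits on whether its lericone sequence equals $\varepsilon$, and that is not a $\sim$-invariant condition (e.g.\ $nn\sim\varepsilon$ but $nn\neq\varepsilon$), so the compatibility/matching claim needs the conditional case argued carefully, tracking how a premise whose sequence $\sim$-reduces to $\varepsilon$ lines up against the two forms of the rule; if the conditional rule is instead intended to branch on $c$-freeness rather than emptiness, one would verify the matching against that formulation. The remaining ingredients --- the congruence clauses for $\sim$, termination and confluence of $nn$-deletion, and the tableau confluence used to pass from ``some'' to ``every'' --- are routine.
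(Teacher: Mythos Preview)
Your direction $X\vdash^{\sim}_{2}A\Rightarrow X\vdash^{\sim}A$ is essentially the paper's: take a standard tableau all of whose branches faithfully close and iterate the Faithfulness Rule on each closing pair until both members reach their common reduct $\rho(\bar{x})=\rho(\bar{y})$, invoking Lemma~\ref{lemma:reduct-identity}.

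For the converse the paper takes a much shorter route than you do. It argues by contraposition: fix a completed standard tableau with a branch $\mathcal{B}$ that is \emph{not} faithfully closed, so there is no formula $B$ and no pair $\bar{x}\sim\bar{y}$ with $\langle\bar{x},1,B\rangle,\langle\bar{y},0,B\rangle\in\mathcal{B}$; equivalently (Lemma~\ref{lemma:reduct-identity}) no such pair with $\rho(\bar{x})=\rho(\bar{y})$. Since each application of the Faithfulness Rule to a triple on $\mathcal{B}$ produces a triple whose sequence is an $nn$-reduct of---hence $\sim$-equivalent to---one already on $\mathcal{B}$, no finite string of such applications can manufacture a genuine closing pair, so $\mathcal{B}$ survives and $X\nvdash^{\sim}A$. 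No translation between calculi and no branch-by-branch simulation is needed.

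Your direct-translation strategy, by contrast, really does founder on the obstacle you flag. Concretely: suppose $\mathcal{T}_{1}$ contains $\langle nn,i,B_{1}\to B_{2}\rangle$, applies the Faithfulness Rule to obtain $\langle\varepsilon,i,B_{1}\to B_{2}\rangle$, and then applies the $\varepsilon$-conditional rule, yielding children with sequence $c$. In your matching standard tableau $\mathcal{T}_{2}$ the corresponding node carries the sequence $nn$, and the only applicable conditional rule is the $\overline{x}\neq\varepsilon$ case, producing children with sequences $lnn\sim l$ and $rnn\sim r$. But $l\not\sim c$ and $r\not\sim c$, so the $\sim$-matching invariant fails at exactly this step; the rule in the paper branches on $\overline{x}=\varepsilon$, not on $c$-freeness, so the escape hatch you suggest is not available. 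The contrapositive argument avoids all of this by never having to replay the decomposition steps of $\mathcal{T}_{1}$: it works with a fixed, already saturated standard tableau and only asks what the Faithfulness Rule could add to it.
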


\begin{proof}
    For left-to-right, consider a completed tableau for $X\Yright A$ without the Faithfulness Rule with a branch $\mathcal{B}$ that is not faithfully closed, \emph{i.e.}, for which there are no nodes $\langle\bar{x},1,B\rangle$ and $\langle{y},0,B\rangle$ on $\mathcal{B}$ for which $\bar{x}\sim\bar{y}$. By Lemma \ref{lemma:reduct-identity}, there are no such $\bar{x},\bar{y}$ where $\rho(\bar{x})=\rho(\bar{y})$. Consequently, no number of applications of the Faithfulness Rule will yield a $\bar{z}$ such that $\langle\bar{z},1,B\rangle$ and $\langle\bar{z},0,B\rangle$ are on an extension to that branch, \emph{i.e.}, $X\nvdash^{\sim}A$.

    For right-to-left, take a tableau for $X\Yright A$ without the Faithfulness Rule such that all branches faithfully close. Then there exist $\langle\bar{x},1,B\rangle$ and $\langle\bar{y},0,B\rangle$ on any branch $\mathcal{B}$ such that $\bar{x}\sim\bar{y}$. One can then apply the Faithfulness Rule to each node finitely many times to yield $\langle\rho(\bar{x}),1,B\rangle$ and $\langle\rho(\bar{y}),0,B\rangle$ on an extended branch $\mathcal{B}'$ without any novel branching. By Lemma \ref{lemma:reduct-identity}, $\rho(\bar{x})=\rho(\bar{y})$ whence the extended branch $\mathcal{B}'$ closes \emph{simpliciter}.
\end{proof}

\begin{lemma}\label{lemma:faithful-conforming}
    A faithful $f$ cannot conform to a set of triples including two triples $\langle\bar{x},1,C\rangle$ and $\langle\bar{y},0,C\rangle$ such that $\bar{x}\sim\bar{y}$.
\end{lemma}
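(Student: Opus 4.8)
The plan is to reduce the claim to the faithfulness lemma for assignments proved just above (the one stating that a faithful lericone-sensitive assignment $f$ satisfies $f(\overline{x}\overline{y},A)=f(\overline{x}nn\overline{y},A)$ for all $A\in\LL$), together with the fact that $\sim$ is, by definition, the equivalence relation \emph{generated} by $\sim'$. So the argument is a short proof by contradiction.

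First I would unpack the hypothesis. Suppose, for contradiction, that a faithful $f$ conforms to a set of triples $S$ with $\langle\bar{x},1,C\rangle\in S$ and $\langle\bar{y},0,C\rangle\in S$ where $\bar{x}\sim\bar{y}$. By the definition of conformance this gives $f(\bar{x},C)=1$ and $f(\bar{y},C)=0$, so in particular $f(\bar{x},C)\neq f(\bar{y},C)$. Next I would show that $\bar{x}\sim\bar{y}$ in fact forces $f(\bar{x},C)=f(\bar{y},C)$. Since $\sim$ is the equivalence relation generated by $\sim'$, there is a finite chain $\bar{x}=\bar{z}_0,\bar{z}_1,\dots,\bar{z}_k=\bar{y}$ in which each consecutive pair $(\bar{z}_i,\bar{z}_{i+1})$ lies in $\sim'$ or in its converse; that is, one of $\bar{z}_i,\bar{z}_{i+1}$ has the form $\bar{u}nn\bar{v}$ and the other the form $\bar{u}\bar{v}$. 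Applying the faithfulness lemma for assignments with $A:=C$ to each link yields $f(\bar{z}_i,C)=f(\bar{z}_{i+1},C)$, and chaining these equalities along the whole path gives $f(\bar{x},C)=f(\bar{y},C)$ — contradicting $f(\bar{x},C)\neq f(\bar{y},C)$. Hence no such faithful $f$ exists.

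The only bookkeeping point is that the faithfulness lemma is stated for the generating pairs $\langle\bar{x}nn\bar{y},\bar{x}\bar{y}\rangle$, whereas we need it for an arbitrary $\sim$-pair; but this propagation is immediate, since each link of the chain is (up to direction) a generating pair and equality of truth values is symmetric and transitive. I therefore do not expect any genuine obstacle: the statement is essentially an immediate corollary of the preceding faithfulness lemma together with the definition of $\sim$ as the equivalence relation generated by $\sim'$.
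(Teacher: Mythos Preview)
Your proof is correct and follows essentially the same approach as the paper: unpack conformance to get $f(\bar{x},C)=1$ and $f(\bar{y},C)=0$, then observe this contradicts faithfulness. You are in fact more careful than the paper, which simply writes ``contradicting the faithfulness of $f$'' without spelling out the chain along generating $\sim'$-pairs that you supply.
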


\begin{proof}
    By definition of $f$'s conforming to $S$, if $f$ were to conform to both $\langle\bar{x},1,C\rangle$ and $\langle\bar{y},0,C\rangle$, then $f(\bar{x},C)=1$ and $f(\bar{y},C)=0$, contradicting the faithfulness of $f$.
\end{proof}

\noindent This allows us to prove soundness of the tableau system with respect to $\LRCN^{\sim}$:

\begin{theorem}[Soundness]
    If $X\vdash^{\sim} A$, then $X\vDash_{\LRCN^{\sim}} A$.
\end{theorem}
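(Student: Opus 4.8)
The plan is to run the same argument as the proof of Theorem~\ref{theorem:soundness-LRCN}, but to route it through the auxiliary relation $\vdash^{\sim}_{2}$ so as to avoid having to track how the Faithfulness Rule interacts with the extension lemma that precedes Theorem~\ref{theorem:soundness-LRCN}. The first move is to invoke Lemma~\ref{lemma:equivalence-faithful} to replace the hypothesis $X\vdash^{\sim}A$ with $X\vdash^{\sim}_{2}A$; that is, every tableau for $X\Yright A$ that omits the Faithfulness Rule faithfully closes. We then argue by contraposition: suppose $X\not\vDash_{\LRCN^{\sim}}A$, so there is a faithful lericone assignment $f$ with $f(\varepsilon,B)=1$ for all $B\in X$ and $f(\varepsilon,A)=0$.

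Next I would observe that, with these stipulations, $f$ conforms to the set of triples on the initial tableau for $X\Yright A$. Since no applications of the Faithfulness Rule are in play, the extension lemma preceding Theorem~\ref{theorem:soundness-LRCN} applies verbatim (it is stated for arbitrary assignments, hence a fortiori for faithful ones), and the usual induction on the construction of the tableau shows that at every stage $f$ conforms to the set of triples on at least one branch; in particular this holds for a completed tableau $\mathcal{T}$. Let $\mathcal{B}$ be a branch of $\mathcal{T}$ to which $f$ conforms.

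Now $X\vdash^{\sim}_{2}A$ tells us that $\mathcal{T}$ faithfully closes, so $\mathcal{B}$ contains triples $\langle\overline{x},1,C\rangle$ and $\langle\overline{y},0,C\rangle$ with $\overline{x}\sim\overline{y}$. But Lemma~\ref{lemma:faithful-conforming} says precisely that no faithful assignment can conform to a set of triples containing such a pair, contradicting the choice of $f$ and $\mathcal{B}$. Hence $f(\varepsilon,A)=1$, so $f\vDash X\Yright A$; since $f$ was an arbitrary faithful assignment, $X\vDash_{\LRCN^{\sim}}A$.

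The only genuinely delicate point — and the step I would treat most carefully — is the appeal to Lemma~\ref{lemma:equivalence-faithful}, since it is exactly what lets us confine attention to tableaux that lack the Faithfulness Rule and thereby reuse the original extension lemma unchanged. If one instead wished to argue directly about tableaux containing the Faithfulness Rule, the extra obligation would be to check that an application of that rule preserves conformity of a faithful $f$; this follows from the earlier fact that a faithful assignment satisfies $f(\overline{x}\overline{y},C)=f(\overline{x}nn\overline{y},C)$ for all formulas $C$, so that adding $\langle\overline{x}\overline{y},i,C\rangle$ below $\langle\overline{x}nn\overline{y},i,C\rangle$ cannot disturb conformity. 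Either route works, but going through $\vdash^{\sim}_{2}$ is the cleaner one and is what I would write up.
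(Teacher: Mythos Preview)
Your proposal is correct and follows essentially the same approach as the paper: both invoke Lemma~\ref{lemma:equivalence-faithful} to pass to $\vdash^{\sim}_{2}$, reuse the extension lemma and the reasoning of Theorem~\ref{theorem:soundness-LRCN} verbatim for a faithful counterexample $f$, and then appeal to Lemma~\ref{lemma:faithful-conforming} to derive the contradiction from the faithfully closing branch. Your write-up is in fact somewhat more explicit than the paper's, and your side remark about the alternative direct route (checking that the Faithfulness Rule preserves conformity of faithful assignments) is a correct and useful observation.
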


\begin{proof}
    Suppose that $X\vdash^{\sim}A$ and suppose for contradiction that $f$ is a faithful assignment such that $f(\epsilon,B)=1$ for each $B\in X$ and $f(\epsilon,A)=0$. By Lemma \ref{lemma:equivalence-faithful}, this means that for every tableau in the system without the Faithfulness Rule, we can follow the reasoning of Theorem \ref{theorem:soundness-LRCN} to ensure in every extension of the initial tableau for $X\Yright A$, $f$ must conform to the formulas on some branch. But Lemma \ref{lemma:equivalence-faithful} means that every branch includes some nodes $\langle \bar{x},1,C\rangle$ and $\langle\bar{y},0,C\rangle$ where $\bar{x}\sim\bar{y}$, \emph{i.e.}, $f(\bar{x},C)\neq f(\bar{y},C)$, contradicting the faithfulness of $f$ per Lemma \ref{lemma:faithful-conforming}.
\end{proof}

\noindent To prove completeness, we again navigate a handful of definitions and lemmas:

\begin{mdef}
    A lericone-sensitive assignment $f$ is \emph{atomically faithful} if for all atoms $p$ and sequences $\bar{x}\sim\bar{y}$, $f(\bar{x},p)=f(\bar{y},p)$.
\end{mdef}

\begin{lemma}\label{lemma:atomic-faithful}
    A lericone-sensitive assignment $f$ is atomically faithful if and only if it is faithful \emph{simpliciter}.
\end{lemma}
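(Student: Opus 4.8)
The plan is to prove the biconditional by its two directions, with essentially all of the content sitting in the left-to-right direction, since one direction is a trivial instance of the other. The right-to-left direction I would dispatch in a line: atoms are formulas, so if $f$ is faithful \emph{simpliciter} — i.e. $f(\bar{x},A)=f(\bar{y},A)$ for every formula $A$ whenever $\bar{x}\sim\bar{y}$ — then in particular this holds for $A=p$ an atom, which is exactly the statement that $f$ is atomically faithful.

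For the left-to-right direction, suppose $f$ is atomically faithful. First I would observe that this already meets the hypothesis of the earlier lemma that upgrades faithfulness-on-atoms to faithfulness on all formulas: since $\bar{x}nn\bar{y}\sim\bar{x}\bar{y}$ for all $\bar{x},\bar{y}\in\LRCN$, atomic faithfulness gives $f(\bar{x}nn\bar{y},p)=f(\bar{x}\bar{y},p)$ for every atom $p$, so $f$ is a faithful lericone-sensitive truth-value assignment in the sense of that earlier definition. Invoking the lemma (the induction on $A$ showing $f(\bar{x}\bar{y},A)=f(\bar{x}nn\bar{y},A)$ for every formula $A$), I obtain that $f$ is invariant under a single $\sim'$-step, and — since equality is symmetric — also under the converse of a $\sim'$-step, now for arbitrary formulas rather than just atoms.

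The remaining step is to lift this from the generating relation $\sim'$ to the equivalence relation $\sim$ it generates. Here I would use the standard fact that $\bar{x}\sim\bar{y}$ iff there is a finite chain $\bar{x}=\bar{z}_0,\bar{z}_1,\dots,\bar{z}_n=\bar{y}$ in which each consecutive pair is related by $\sim'$ or its converse (the reflexive case being $n=0$); an easy induction on $n$, using transitivity of equality together with the single-step invariance just established, yields $f(\bar{x},A)=f(\bar{y},A)$ for every formula $A$, which is faithfulness. I do not expect a genuine obstacle here; the only point worth stating carefully is that $\sim'$ is defined as a set of \emph{ordered} pairs, so one must note explicitly that the single-step invariance of $f$ runs in both directions before chaining — but this is immediate. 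The one mildly non-obvious move, really the crux of the argument, is recognising that atomic faithfulness is \emph{already} strong enough to satisfy the hypothesis of the all-formulas lemma, so that nothing new needs to be proved by a fresh induction.
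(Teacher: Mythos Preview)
Your proposal is correct, and your route differs from the paper's in a small but genuine way. The paper proves the left-to-right direction by a \emph{fresh} induction on the complexity of $A$, working directly with the full relation $\sim$: assuming $\bar{x}\sim\bar{y}$, one checks each connective case, e.g.\ $f(\bar{x},B\wedge C)=\inf(f(\bar{x},B),f(\bar{x},C))=\inf(f(\bar{y},B),f(\bar{y},C))=f(\bar{y},B\wedge C)$, relying implicitly on the fact that prepending $l$, $r$, or $n$ preserves $\sim$. You instead factor the argument through the earlier lemma: atomic faithfulness specializes to single-$nn$ invariance on atoms, the earlier lemma upgrades this to single-$nn$ invariance on all formulas, and then a finite chaining argument lifts from $\sim'$ to $\sim$. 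Your approach buys modularity---no induction is repeated---at the cost of the explicit chaining step; the paper's approach is a one-shot induction but silently redoes work already packaged in the earlier lemma. Both are entirely sound.
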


\begin{proof}
    Right-to-left is immediate. Left-to-right follows from an induction on complexity of formulas, \emph{e.g.}, supposing that $\bar{x}\sim\bar{y}$ then:

    \begin{center}
        $f(\bar{x},B\wedge C)=\inf(f(\bar{x},B),f(\bar{x},C))=\inf(f(\bar{y},B),f(\bar{y},C))=f(\bar{y},B\wedge C)$
    \end{center}

\end{proof}

\begin{lemma}\label{lemma:find-a-faithful-counterexample}
    Let $X$ be a set of formulas with signed atoms $\mathsf{At}(X)=\lbrace \langle \bar{x},p\rangle\mid \bar{x}=\lrcn(B[p])\mbox{ for some }B\in X\rangle$ with respect to which it is atomically faithful. Then there exists a faithful $f'$ such that $f'(\varepsilon,B)=f(\varepsilon,B)$ for all $B\in X$.
\end{lemma}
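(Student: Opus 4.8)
The plan is to build $f'$ by collapsing $f$ along $\sim$-equivalence classes, using the hypothesis on $f$ to make the collapse well-defined. First I would define, for each atom $p$ and each $\overline{x}\in\LRCN$: if there is a pair $\langle\overline{y},p\rangle\in\mathsf{At}(X)$ with $\overline{y}\sim\overline{x}$, set $f'(\overline{x},p)=f(\overline{y},p)$ for any such $\overline{y}$; otherwise set $f'(\overline{x},p)=1$. The first clause is unambiguous precisely because $f$ is atomically faithful with respect to $\mathsf{At}(X)$: if $\langle\overline{y},p\rangle,\langle\overline{z},p\rangle\in\mathsf{At}(X)$ and $\overline{y}\sim\overline{z}$, then $f(\overline{y},p)=f(\overline{z},p)$. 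Moreover, whether the first or second clause applies, and which value it yields, depends only on the $\sim$-class of $\overline{x}$ (since $\sim$ is an equivalence relation and ``meeting $\mathsf{At}(X)$ in the $p$ coordinate'' is a property of classes; by Lemma~\ref{lemma:reduct-identity} this is just a property of $\rho(\overline{x})$). Hence $f'$ is atomically faithful, and so by Lemma~\ref{lemma:atomic-faithful} it is faithful \emph{simpliciter}.

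It then remains to check that $f'$ agrees with $f$ on the formulas of $X$ at the root. I would prove the stronger claim by induction on subformula structure: for every $B\in X$ and every subformula occurrence $C$ of $B$, writing $\overline{x}=\lrcn(B[C])$, we have $f'(\overline{x},C)=f(\overline{x},C)$. For the base case $C=p$ an atom, the occurrence witnesses $\langle\overline{x},p\rangle\in\mathsf{At}(X)$, so the first clause of the definition of $f'$ applies with $\overline{x}$ itself as a witness, giving $f'(\overline{x},p)=f(\overline{x},p)$. The inductive cases are routine: $f$ and $f'$ extend to compound formulas by the very same clauses, and the lericone sequences attached to immediate subformula occurrences in $\lpt(B)$ are functionally determined from $\overline{x}$ --- namely $\overline{x},\overline{x}$ for a conjunction or disjunction, $n\overline{x}$ for a negation, and $c,c$ (when $\overline{x}=\varepsilon$) or $l\overline{x},r\overline{x}$ (otherwise) for a conditional --- so the inductive hypothesis on those immediate subformulas transfers directly through $\inf$, $\sup$, and $1-(\cdot)$. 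Taking $C=B$, so that $\overline{x}=\varepsilon$, yields $f'(\varepsilon,B)=f(\varepsilon,B)$ for all $B\in X$, as required.

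The one point requiring care is the interplay in the first step between well-definedness and faithfulness of $f'$: one must observe that ``being $\sim$-related to some $\overline{y}$ with $\langle\overline{y},p\rangle\in\mathsf{At}(X)$'' is a genuine property of $\sim$-classes, so that $f'$ is constant on each class (hence atomically faithful), while at the same time the hypothesis on $f$ is exactly strong enough to make the value assigned to such a class independent of the chosen representative. Everything after that is bookkeeping of the same kind already carried out in the proofs of Lemma~\ref{Lemma:SubstitutionsContainedInValid} and Lemma~\ref{Lemma:h-definition}.
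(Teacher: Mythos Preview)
Your proposal is correct and follows essentially the same construction as the paper: define $f'$ by cases according to whether the $\sim$-class of $\langle\overline{x},p\rangle$ meets $\mathsf{At}(X)$, then invoke Lemma~\ref{lemma:atomic-faithful}. In fact your version is more careful than the paper's terse proof---you use a witness $\overline{y}$ from $\mathsf{At}(X)$ to define the value (rather than $f(\overline{x},p)$ itself), which is precisely what makes the well-definedness and atomic faithfulness of $f'$ go through from the restricted hypothesis on $f$, and you supply the subformula induction establishing $f'(\varepsilon,B)=f(\varepsilon,B)$ that the paper leaves implicit.
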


\begin{proof}
    Define $f'$ so that

    \[
f'(p)=\left\{
        \begin{array}{rl}
            f(\overline{x},p) & \text{ if } p \text{ occurs in $B$ and $\overline{x}\sim\lrcn(B[p])$} \\
            1 & \text{ otherwise}.
        \end{array}\right.
\]

Then if $f$ is atomically faithful, so is $f'$ and moreover, by Lemma \ref{lemma:atomic-faithful}, $f'$ is faithful without qualification.
\end{proof}

\begin{mdef}
    Let $S$ be a set of triples. Its faithful closure $S^{\sim}$ is the set $\lbrace\langle\bar{x},i.A\rangle\mid\langle\bar{y},i,A\rangle\in S\mbox{ and }\bar{x}\sim\bar{y}\rangle\rbrace$.
\end{mdef}

\begin{lemma}
    Let $\mathcal{B}$ be an open branch in a saturated, complete tableau in the system including the Faithfulness Rule. Then $\mathcal{B}^{\sim}$ includes no inconsistent pairs of triples.
\end{lemma}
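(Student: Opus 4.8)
The plan is to argue by contradiction, reducing a would-be inconsistency in $\mathcal{B}^{\sim}$ back to a $\sim$-incompatible pair already on $\mathcal{B}$ and then applying the Faithfulness Rule to close $\mathcal{B}$ outright. So suppose $\mathcal{B}^{\sim}$ contains an inconsistent pair, i.e., triples $\langle\bar{x},1,C\rangle$ and $\langle\bar{x},0,C\rangle$ for some lericone sequence $\bar{x}$ and formula $C$ (since $\mathcal{B}^{\sim}$ is $\sim$-closed, allowing the two sequences to merely be $\sim$-equivalent gives nothing more general). By the definition of the faithful closure, there are $\bar{y}\sim\bar{x}$ and $\bar{z}\sim\bar{x}$ with $\langle\bar{y},1,C\rangle\in\mathcal{B}$ and $\langle\bar{z},0,C\rangle\in\mathcal{B}$; transitivity of $\sim$ gives $\bar{y}\sim\bar{z}$. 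It therefore suffices to show that no open branch of a saturated, complete tableau built with the Faithfulness Rule contains two triples $\langle\bar{y},1,C\rangle$ and $\langle\bar{z},0,C\rangle$ with $\bar{y}\sim\bar{z}$.

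For this, apply Lemma~\ref{lemma:reduct-identity} to get $\rho(\bar{y})=\rho(\bar{z})$. Now iterate the Faithfulness Rule: starting from $\langle\bar{y},1,C\rangle$, each application deletes one occurrence of $nn$ and so strictly shortens the sequence, whence after finitely many steps we reach a triple $\langle\bar{w},1,C\rangle$ with $\bar{w}$ free of $nn$; since $\bar{w}\sim\bar{y}$, Lemma~\ref{lemma:reduct-identity} forces $\bar{w}=\rho(\bar{w})=\rho(\bar{y})$. Likewise, from $\langle\bar{z},0,C\rangle$ we reach $\langle\rho(\bar{z}),0,C\rangle$. Because the tableau is complete and includes the (non-branching) Faithfulness Rule, each of its branches is closed under that rule, so both $\langle\rho(\bar{y}),1,C\rangle$ and $\langle\rho(\bar{z}),0,C\rangle$ already lie on $\mathcal{B}$. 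As $\rho(\bar{y})=\rho(\bar{z})$, these are $\langle\rho(\bar{y}),1,C\rangle$ and $\langle\rho(\bar{y}),0,C\rangle$, so $\mathcal{B}$ closes \emph{simpliciter}, contradicting the hypothesis that $\mathcal{B}$ is open. Hence $\mathcal{B}^{\sim}$ contains no inconsistent pair.

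The one step that needs care is the claim that iterated applications of the Faithfulness Rule actually terminate at $\rho(\bar{y})$ rather than at some other $nn$-free sequence; this is precisely the confluence fact packaged in Lemma~\ref{lemma:reduct-identity}, and it is the same maneuver used in the right-to-left direction of Lemma~\ref{lemma:equivalence-faithful}. The only other point to pin down is the reading of ``saturated, complete'' once the Faithfulness Rule is in play: since that rule is non-branching and merely adjoins a triple, completeness must be understood to require every branch also to be closed under it, which is exactly what the argument above invokes.
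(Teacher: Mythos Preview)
Your argument is correct and follows essentially the same contrapositive strategy as the paper: pull the putative inconsistency in $\mathcal{B}^{\sim}$ back to a $\sim$-incompatible pair on $\mathcal{B}$, then use completeness under the Faithfulness Rule to obtain a genuine clash on $\mathcal{B}$. If anything, your version is tighter than the paper's: the paper speaks as though the witnessing $\bar{x}$ itself must already appear on $\mathcal{B}$ via elisions of $nn$, whereas you correctly observe that one should descend to the common reduct $\rho(\bar{y})=\rho(\bar{z})$ via Lemma~\ref{lemma:reduct-identity}, which is the move that actually works regardless of whether $\bar{x}$ is longer or shorter than the original sequences on $\mathcal{B}$.
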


\begin{proof}
We prove this by contraposition. Suppose that there are triples $\langle\bar{x},1,C\rangle$ and $\langle\bar{x},0,C\rangle$ in $\mathcal{B}^{\sim}$. Then as $\bar{x}$ is a finite string, these tuples entered the closure of $\mathcal{B}$ by finitely many occasions of eliding substrings $nn$ given initial tuples $\langle\bar{y},1,C\rangle$ and $\langle\bar{y},0,C\rangle$ from the branch $\mathcal{B}$. But  by completeness of the branch, corresponding applications of the Faithfulness Rule must have led to  $\langle\bar{x},1,C\rangle$ and $\langle\bar{x},0,C\rangle$ having appeared in $\mathcal{B}$ itself, ruling out either its openness.
\end{proof}

\noindent Together, the foregoing lemmas set up a proof of the completeness of the tableau system with the Faithfulness Rule with respect to $\LRCN^{\sim}$:

\begin{theorem}[Completeness]
    If $X\nvdash^{\sim} A$, then $X\nvDash_{\LRCN^{\sim}} A$.
\end{theorem}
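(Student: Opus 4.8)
The plan is to mirror the completeness argument for the plain tableau system (the earlier completeness theorem for $\CLV$), but to take care that the assignment read off an open branch is genuinely faithful. First I would observe, exactly as in the remark preceding that earlier theorem, that if $X\nvdash^{\sim}A$ then some completed tableau for $X\Yright A$ in the system with the Faithfulness Rule has an open branch $\mathcal{B}$; here ``completed'' is understood to mean that each branch is both saturated in the formula-decomposition sense and closed under the Faithfulness Rule. By the immediately preceding lemma, the faithful closure $\mathcal{B}^{\sim}$ then contains no inconsistent pair $\langle\overline{x},1,C\rangle$, $\langle\overline{x},0,C\rangle$.

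Next I would define the atomic part of the countermodel by setting $f_{\mathcal{B}}(\overline{x},p)=1$ iff $\langle\overline{x},1,p\rangle\in\mathcal{B}^{\sim}$, and $f_{\mathcal{B}}(\overline{x},p)=0$ otherwise. This is a total function, and since $\mathcal{B}^{\sim}$ is closed under $\sim$ on its sequence-components (by transitivity of $\sim$), $f_{\mathcal{B}}$ is atomically faithful; hence by Lemma~\ref{lemma:atomic-faithful} it is faithful without qualification. The no-inconsistent-pairs property ensures that whenever $\langle\overline{x},i,p\rangle\in\mathcal{B}$ we have $f_{\mathcal{B}}(\overline{x},p)=i$: for $i=1$ this is immediate, and for $i=0$ it follows because $\langle\overline{x},1,p\rangle\in\mathcal{B}^{\sim}$ would, together with $\langle\overline{x},0,p\rangle\in\mathcal{B}\subseteq\mathcal{B}^{\sim}$, contradict that lemma.

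The core of the argument is then the same induction on $C$ used in the plain case: if $\langle\overline{x},i,C\rangle\in\mathcal{B}$ then $f_{\mathcal{B}}(\overline{x},C)=i$. The base case is handled by the previous paragraph, and each inductive step invokes saturation of $\mathcal{B}$ exactly as before --- for instance, a triple $\langle\overline{x},1,B_1\land B_2\rangle\in\mathcal{B}$ forces $\langle\overline{x},1,B_1\rangle,\langle\overline{x},1,B_2\rangle\in\mathcal{B}$, to which the inductive hypothesis applies, and similarly for the other connectives. The Faithfulness Rule does not require a new case: a triple it produces is still a triple on $\mathcal{B}$, and the induction is driven by the structure of the formula, not by which rule introduced the triple. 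Applying the induction to $\{\langle\varepsilon,1,D\rangle\mid D\in X\}\subseteq\mathcal{B}$ and $\langle\varepsilon,0,A\rangle\in\mathcal{B}$ yields $f_{\mathcal{B}}(\varepsilon,D)=1$ for all $D\in X$ and $f_{\mathcal{B}}(\varepsilon,A)=0$; since $f_{\mathcal{B}}$ is faithful, this shows $X\nvDash_{\LRCN^{\sim}}A$.

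The main obstacle --- and the reason for passing through $\mathcal{B}^{\sim}$ rather than $\mathcal{B}$ itself --- is guaranteeing that the branch-induced assignment is faithful while still conforming to every triple on the branch. This is precisely what the no-inconsistent-pairs lemma for $\mathcal{B}^{\sim}$ secures: it permits symmetrizing the atomic data under $\sim$ without creating a clash, so that atomic faithfulness, and hence by Lemma~\ref{lemma:atomic-faithful} full faithfulness, is obtained for free. One could alternatively define $f_{\mathcal{B}}$ directly from $\mathcal{B}$ and then invoke Lemma~\ref{lemma:find-a-faithful-counterexample} to replace it by a faithful assignment agreeing with it at $\varepsilon$ on all formulas of $X\cup\{A\}$; the two routes are interchangeable.
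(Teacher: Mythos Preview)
Your proposal is correct and follows essentially the same route as the paper: pass to the faithful closure $\mathcal{B}^{\sim}$ of an open branch, read off an assignment from its atomic triples, and run the same induction as in the plain completeness proof. The one difference is that you observe directly that the assignment built from $\mathcal{B}^{\sim}$ is already atomically faithful and hence (via Lemma~\ref{lemma:atomic-faithful}) faithful outright, whereas the paper defines $f_{\mathcal{B}^{\sim}}$ the same way but then invokes Lemma~\ref{lemma:find-a-faithful-counterexample} to manufacture a faithful replacement; your observation makes that extra step unnecessary, and indeed you flag the paper's route as the interchangeable alternative in your final paragraph.
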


\begin{proof}
   Suppose that $X\nvdash^{\sim}A$ and find an open branch and take its faithful closure $\mathcal{B}^{\sim}$. As before, we can a valuation $f_{\mathcal{B}^{\sim}}(\bar{x},p)=1$ precisely when $\langle\bar{x},1,p\rangle\in\mathcal{B}^{\sim}$; by similar reasoning as used in the proof of completeness of $\mathbf{CLV}$, $f_{\mathcal{B}^{\sim}}\nVdash X\Yright A$. What remains to be shown is that we can find a \emph{faithful} $f_{\mathcal{B}^{\sim}}'$ witnessing this fact. By Lemma \ref{lemma:find-a-faithful-counterexample}, however, we are guaranteed that one exists, whence $X\nvDash_{\LRCN^{\sim}}A$.
\end{proof}

\section{Conclusion}\label{Section:Conclusion}

We introduced a novel and very aggressive form of hyperformality, \textit{lericone-hyperformality}. We showed that two well-known relevant logics are in fact lericone-hyperformal. As usual this gave us lericone variable-sharing as a corollary. Consequently, variable sharing  for $\logic{BM} $ and $\logic{B}$ can be freed from parasitism on variable sharing results for $\mathbf{R}$. 

We then introduced a new relevant logic that was also lericone-hyperformal---in fact, what we introduced was the largest lericone-hyperformal sublogic of classical logic---and a ``faithful'' sister system. These logics were defined semantically via the mechanism of lericone-sensitive assignments. We also provided proof systems for these logics in the form of tableau systems. 

There are many natural extensions of this work to be considered. Most pressing to us is actually giving a decent axiomatization of $\CLV$ and $\CLV^{\sim}$, and perhaps providing an alternative proof system for $\models_\LRCN$. It's also not clear how far we might extend the proof of our variable sharing result for $\CLV$. It doesn't work when tracking depth alone a la Brady~\citeyearpar{brady1984}---the largest sublogic of classical logic closed under depth substitutions will still contain all substitutions of $p\to(q\lor\neg q)$, so it doesn't enjoy variable sharing. But perhaps signed depth (a la Logan~\citeyearpar{logan2021}) would do the job. We're not sure. It would be good to know whether there is a single strongest lericone-relevant sublogic of \logic{CL} or whether there are multiple, incomparable such logics, as is the case with sublogics of \logic{CL} satisfying the variable-sharing criterion.\footnote{For the latter point, see \citet{Swirydowicz1999-SWITEE} and \citet{Mendez2012-MNDTEP}.
}

Finally, extensions of the vocabulary being considered naturally present opportunities for doing more syntax-tracking. Both modal extensions and extensions with constants in the $\mathsf{t},\mathsf{f},\top,\bot$ family are interesting in this regard.

\section*{Acknowledgments}
To be added. 

\bibliography{./bib}

\begin{thebibliography}{}

\bibitem[Anderson and Belnap, 1975]{anderson1975}
Anderson, A.~R. and Belnap, Jr., N.~D. (1975).
\newblock {\em Entailment: The Logic of Relevance and Necessity}, volume~I.
\newblock Princeton University Press, Princeton.

\bibitem[Angell, 1977]{angell1977}
Angell, R.~B. (1977).
\newblock Three systems of first degree entailment.
\newblock {\em Journal of Symbolic Logic}, 42(1):147.

\bibitem[Angell, 1989]{angell1989}
Angell, R.~B. (1989).
\newblock Deducibility, entailment and analytic containment.
\newblock In Norman, J. and Sylvan, R., editors, {\em Directions in Relevant
  Logic}, Reason and Argument, pages 119--143. Kluwer Academic Publishers,
  Boston, MA.

\bibitem[Belnap, 1960]{belnap1960}
Belnap, Jr., N.~D. (1960).
\newblock Entailment and relevance.
\newblock {\em The Journal of Symbolic Logic}, 25(2):144--146.

\bibitem[Berto, 2022]{berto2022}
Berto, F. (2022).
\newblock {\em Topics of Thought}.
\newblock Oxford University Press, Oxford.

\bibitem[Bimb\'o, 2007]{Bimbo:2006aa}
Bimb\'o, K. (2007).
\newblock Relevance logics.
\newblock In Jacquette, D., editor, {\em Philosophy of Logic}, volume~5 of {\em
  Handbook of the Philosophy of Science}, pages 723--789. Elsevier.

\bibitem[Brady, 1984]{brady1984}
Brady, R.~T. (1984).
\newblock Depth relevance of some paraconsistent logics.
\newblock {\em Studia Logica}, 43(1--2):63--73.

\bibitem[Dunn and Restall, 2002]{Dunn2002-DUNRL}
Dunn, J.~M. and Restall, G. (2002).
\newblock Relevance logic.
\newblock In Gabbay, D.~M. and Guenthner, F., editors, {\em Handbook of
  Philosophical Logic}, volume~6, pages 1--136. Kluwer, 2nd edition.

\bibitem[Ferenz and Tedder, 2023]{ferenz2023neighbourhood}
Ferenz, N. and Tedder, A. (2023).
\newblock Neighbourhood semantics for modal relevant logics.
\newblock {\em Journal of Philosophical Logic}, 52(1):145--181.

\bibitem[Ferguson, 2017]{Ferguson2017-FERDQO}
Ferguson, T.~M. (2017).
\newblock {D}unn-{P}riest quotients of many-valued structures.
\newblock {\em Notre Dame Journal of Formal Logic}, 58(2):221--239.

\bibitem[Ferguson, 2023a]{ferguson2023a}
Ferguson, T.~M. (2023a).
\newblock Subject-matter and intensional operators {I}: {C}onditional-agnostic
  analytic implication.
\newblock {\em Philosophical Studies}, 180(7):1849--1879.

\bibitem[Ferguson, 2023b]{ferguson2023c}
Ferguson, T.~M. (2023b).
\newblock Subject-matter and intensional operators {II}: {A}pplications to the
  theory of topic-sensitive intentional modals.
\newblock {\em Journal of Philosophical Logic}, 52(6):1673--1701.

\bibitem[Ferguson, 2023c]{ferguson2023d}
Ferguson, T.~M. (2023c).
\newblock Subject-matter and intensional operators {III}: {S}tate-sensitive
  subject-matter and topic sufficiency.
\newblock {\em The Review of Symbolic Logic}, pages 1--27.

\bibitem[Ferguson, 2023d]{ferguson2023e}
Ferguson, T.~M. (2023d).
\newblock Subject-matter and intensional operators {IV}: {L}eft and right topic
  sufficiency.
\newblock In Sedl\'{a}r, I., editor, {\em The Logica Yearbook 2022}, pages
  19--37. College Publications, London.

\bibitem[Ferguson, 2024]{ferguson2024}
Ferguson, T.~M. (2024).
\newblock A topic-theoretic perspective on variable-sharing (from the black
  sheep of the family).
\newblock In Sedl\'{a}r, I., Standefer, S., and Tedder, A., editors, {\em New
  Directions in Relevant Logics}, page tbd. Springer.
\newblock Forthcoming.

\bibitem[Ferguson and Kadlecikova, 2024]{fergusonkadlecikova2024}
Ferguson, T.~M. and Kadlecikova, J. (2024).
\newblock A uniform approach to variable-sharing in relevant logics.
\newblock In Sedl\'{a}r, I., editor, {\em The Logica Yearbook 2023}. College
  Publications, London.
\newblock To appear.

\bibitem[Ferguson and Logan, 2024]{FergusonForthcoming-FERTTA-7}
Ferguson, T.~M. and Logan, S.~A. (2024).
\newblock Topic transparency and variable sharing in weak relevant logics.
\newblock {\em Erkenntnis}, pages 1--28.
\newblock Forthcoming.

\bibitem[Fine, 2016]{fine2015}
Fine, K. (2016).
\newblock Angellic content.
\newblock {\em Journal of Philosophical Logic}, 45(2):199--226.

\bibitem[Fuhrmann, 1988]{fuhrmann1988}
Fuhrmann, A. (1988).
\newblock {\em Relevant logics, modal logics and theory change}.
\newblock PhD thesis, The Australian National University.

\bibitem[Humberstone, 2005a]{Humberstone2005-LLOFWO}
Humberstone, L. (2005a).
\newblock For want of an ?and?: A puzzle about non-conservative extension.
\newblock {\em History and Philosophy of Logic}, 26(3):229--266.

\bibitem[Humberstone, 2005b]{Humberstone:2005aa}
Humberstone, L. (2005b).
\newblock Modality.
\newblock In Jackson, F. and Smith, M., editors, {\em Oxford Handbook of
  Contemporary Philosophy}, pages 534--614. Oxford University Press.

\bibitem[Leach{-}Krouse et~al., 2024]{Leach-Krouse2024-LEALIT-4}
Leach{-}Krouse, G., Logan, S.~A., and Worley, B. (2024).
\newblock Logic in the deep end.
\newblock {\em Analysis}, 84(2):282--291.

\bibitem[Logan, 2021]{logan2021}
Logan, S.~A. (2021).
\newblock Strong depth relevance.
\newblock {\em Australasian Journal of Logic}, 18(6):645--656.

\bibitem[Logan, 2022]{logan2022}
Logan, S.~A. (2022).
\newblock Depth relevance and hyperformalism.
\newblock {\em Journal of Philosophical Logic}, 51(4):721--737.

\bibitem[Logan, 2023]{logan2023correction}
Logan, S.~A. (2023).
\newblock Correction to: {D}epth relevance and hyperformalism.
\newblock {\em Journal of Philosophical Logic}, 52(4):1235.

\bibitem[Logan, 2024]{Logan:2023aa}
Logan, S.~A. (2024).
\newblock {\em Relevance Logic}.
\newblock Elements in Philosophy and Logic. Cambridge Univeristy Press.

\bibitem[Mares, 2004]{mares2004}
Mares, E. (2004).
\newblock {\em Relevant Logic: {A} Philosophical Interpretation}.
\newblock Cambridge University Press, Cambridge.

\bibitem[Mares, 2022]{sep-logic-relevance}
Mares, E. (2022).
\newblock {Relevance Logic}.
\newblock In Zalta, E.~N. and Nodelman, U., editors, {\em The {Stanford}
  Encyclopedia of Philosophy}. Metaphysics Research Lab, Stanford University,
  {F}all 2022 edition.

\bibitem[M\'endez et~al., 2012]{Mendez2012-MNDTEP}
M\'endez, J.~M., Robles, G., and Salto, F. (2012).
\newblock Ticket entailment plus the mingle axiom has the variable-sharing
  property.
\newblock {\em Logic Journal of the IGPL}, 20(1):355--364.

\bibitem[Priest and Sylvan, 1992]{priest1992a}
Priest, G. and Sylvan, R. (1992).
\newblock Simplified semantics for basic relevant logics.
\newblock {\em Journal of Philosophical Logic}, 21(2):217--232.

\bibitem[Pun\v{c}och\'{a}\v{r}, 2023]{Puncochar2023-PUNLFS}
Pun\v{c}och\'{a}\v{r}, V. (2023).
\newblock Logical forms, substitutions and information types.
\newblock {\em Logic and Logical Philosophy}, 32(3):459--473.

\bibitem[Read, 1988]{Read1988}
Read, S. (1988).
\newblock {\em Relevant Logic}.
\newblock Oxford: Blackwell.

\bibitem[Robles and M\'{e}ndez, 2012]{robles2012}
Robles, G. and M\'{e}ndez, J.~M. (2012).
\newblock A general characterization of the variable-sharing property by means
  of logical matrices.
\newblock {\em Notre Dame Journal of Formal Logic}, 53(2):223--244.

\bibitem[Robles and M\'{e}ndez, 2014]{robles2014}
Robles, G. and M\'{e}ndez, J.~M. (2014).
\newblock Generalizing the depth relevance condition: {D}eep relevant logics
  not included in {R}-{M}ingle.
\newblock {\em Notre Dame Journal of Formal Logic}, 55(1):107--127.

\bibitem[Routley et~al., 1982]{routley1982}
Routley, R., Plumwood, V., Meyer, R.~K., and Brady, R.~T. (1982).
\newblock {\em Relevant Logics and their Rivals}, volume~1.
\newblock Ridgeview Publishing, Atascadero, CA.

\bibitem[Standefer, 2019]{standefer2019tracking}
Standefer, S. (2019).
\newblock Tracking reasons with extensions of relevant logics.
\newblock {\em Logic Journal of the IGPL}, 27(4):543--569.

\bibitem[Standefer, 2023]{standefer2023weak}
Standefer, S. (2023).
\newblock Weak relevant justification logics.
\newblock {\em Journal of Logic and Computation}, 33(7):1665--1683.

\bibitem[Standefer, 2024]{standefer2024}
Standefer, S. (2024).
\newblock Variable-sharing as relevance.
\newblock In Sedl\'{a}r, I., Standefer, S., and Tedder, A., editors, {\em New
  Directions in Relevant Logics}, page tbd. Springer.
\newblock Forthcoming.

\bibitem[Standefer, 202x]{standefer202x}
Standefer, S. (202x).
\newblock {\em Relevant Logic: Implication, Modality, Quantification}.
\newblock Cambridge University Press, Cambridge.
\newblock Under contract.

\bibitem[Swirydowicz, 1999]{Swirydowicz1999-SWITEE}
Swirydowicz, K. (1999).
\newblock There exist exactly two maximal strictly relevant extensions of the
  relevant logic {R}.
\newblock {\em Journal of Symbolic Logic}, 64(3):1125--1154.

\bibitem[Tedder and Bilkov{\'a}, 2022]{tedder2022relevant}
Tedder, A. and Bilkov{\'a}, M. (2022).
\newblock Relevant propositional dynamic logic.
\newblock {\em Synthese}, 200(3):235.

\bibitem[Yablo, 2014]{yablo2014}
Yablo, S. (2014).
\newblock {\em Aboutness}.
\newblock Princeton University Press, Princeton.

\end{thebibliography}
\bibliographystyle{apalike}

\end{document}